\newtheorem{theorem}{Theorem}[section]
\newtheorem{lemma}[theorem]{Lemma}
\newtheorem{proposition}[theorem]{Proposition}
\newtheorem{remark}[theorem]{Remark}
\newtheorem{definition}[theorem]{Definition}
\newtheorem{definition-proposition}[theorem]{Definition-Proposition}
\newtheorem{hypothesis}[theorem]{Hypothesis}
\newtheorem{example}[theorem]{Example}
\numberwithin{equation}{section}
\newcommand\Tr{\mathrm{Tr}}
\newcommand\esp{\mathbb E}
\newcommand\limN{\underset{N \rightarrow \infty}\longrightarrow}
\newcommand\Nlim{\underset{N \rightarrow \infty}\lim}
\newcommand\eqa{\begin{eqnarray}}
\newcommand\qea{\end{eqnarray}}
\newcommand\eq{\begin{eqnarray*}}
\newcommand\qe{\end{eqnarray*}}
\newcommand\etc{,\ldots ,}
\newcommand\one{\mathbbm{1}}
\newcommand\mbf{\mathbf}
\newcommand\mcal{\mathcal}
\newcommand\mbb{\mathbb}
\newcommand\mrm{\mathrm}
\title{A traffic approach for profiled Pennington-Worah matrices}
\author {Issa Dabo and Camille Male}
\begin{document}
\date{}
\maketitle
\begin{abstract}    We study macroscopic observables of large random matrices introduced by Pennington and Worah, of the form $Y(h) = \frac 1 {\sqrt{N_2}} h \big[ \big\{ \frac{ W X}{\sqrt{N_0}} \big\} \big]$, where $W$ and $X$ are random rectangular matrices with independent entries and $h$ is a function evaluated entry-wise. We allow the variance of the entries of the matrices to vary from entry to entry. We complement Péché perspective from [Electron. Commun. Probab. 24 (2019), no. 66, 1–7] showing a decomposition of $Y(h)$ whose and traffic asymptotic traffic-equivalent for their ingredients, when $h$ belong to the space of odd polynomials. This give a new interpretation of the "linear plus chaos" phenomenon observed for these matrices. 
\end{abstract}

Primary 15B52, 46L54; Keywords: 
Free Probability, Large Random Matrices

\setcounter{tocdepth}{2}

{\bf Notations:}  For an integer $n\geq 1$, we use the notation $[n]:=\{1\etc n\}$. For a real rectangular matrix $A$ and a function $h:\mbb R \to \mbb R$, we denote by $h[\{A\}]$ the entry-wise evaluation of $h$ in $A$, that is the matrix whose entries are the image by $h$ of the corresponding entries of $A$. We consider matrix sizes $N_0,N_1, \dots$ for rectangular matrices in the classical regime of random matrices, i.e. for each $i \geq 1$, we implicitly assume that $N_i$ is a sequence $N_i(N)$ for a parameter $N\geq 1$ that tends to infinity, and such that the ratio $ \frac{N_i}N $ converges to a positive limit when $N\rightarrow \infty$.

\section{Introduction}\label{Sec:Intro}

In this article, we study random Gram matrices that were introduced by Jeffrey Pennington and Pratik Worah in the context of machine learning \cite{PenWor}. We refer the reader to  \cite{Mont22} for the motivation of the authors in this context.

\begin{definition}\label{MatrixModel}
Given two complex random matrices $W$ in $\mbb R^{N_1 \times N_0}$  and $X$ in $\mbb R^{N_0 \times N_2}$, as long as a  function $h: \mbb R \to \mbb R$, we set
	\eqa \label{Eq:MatrixModel}
		Y(h)= \frac 1 {\sqrt{N_2}} h \left[ \left\{ \frac{ W X}{\sqrt{N_0}} \right\} \right] \in \mbb R^{N_1 \times N_2},
	\qea
that we call the \emph{Pennington-Worah matrix} associated to $h,W,X$. 
\end{definition}

The weak convergence of the empirical eigenvalues distribution of $YY^t$ for such a matrix $Y$ is proved in \cite{PenWor} for Gaussian entries. Lucas Benigni and Sandrine Péché \cite{BenPec} extends their result and study the outliers under the following hypotheses.

\begin{hypothesis}\label{WignerLikeModel} The rectangular matrices $W$ and $X$ are independent and have centered real i.i.d. entries with variance one. Moreover there exist constants $\vartheta>0$, $\alpha>1$ such that
	\eq\label{Eq:BoundEntries}
	 \mbb P\big( |W(1,1)| \geq t \mrm{ \ and \ }  |X(1,1)| \geq t \big) \leq e^{-\vartheta t^\alpha}.
	 \qe
The function $h:\mbb R \to \mbb R$ is real analytic, and there exist constants $C,c,A_0>0$ such that $|h^{(n)}(x)|\leq CA^{cn}$, for all $A\geq A_0$,   $n\in \mbb N$ and   $x\in [-A,A]$.  
\end{hypothesis}

Under the above assumptions, the empirical eigenvalues distribution of $YY^*$ converges in probability toward a deterministic limit has $N$ goes to infinity. The limit is described by a consistent system of equations for Stieltjes transforms. Moreover, in \cite{Peche19}  Sandrine P\'ech\'e proposes a presentation of this distribution by exhibiting a \emph{simple equivalent model} with same limiting distribution. She also use this method with Lucas Bengnini in \cite{BenPec22} to describe the outliers.

To state their results from \cite{Peche19,BenPec22}, we recall that a sequence of sets $S_N\subset \mbb R$ converges in Hausdorff topology toward $S \subset \mbb R$ whenever for $\min\{ \varepsilon >0 \, | \, S_N+(-\varepsilon, \varepsilon) \subset S, S+(-\varepsilon, \varepsilon) \subset S_N\}$ tends to zero. We use the following terminology.

\begin{definition} Let $A=A_N$ and $B=B_N$ be two sequences for $N_1\times N_2$ rectangular matrices, where $\frac {N_i}N\limN \psi_i>0$ for $i=1,2$. We say that $A$ and $B$ are \emph{spectral equivalent} if the empirical eigenvalues distributions of $AA^t$ and $BB^t$ converges to the same limit. We say that $A$ and $B$ are \emph{strongly spectral equivalent} if moreover the spectra of $AA^t$ and $BB^t$ converges to a same set in Hausdorff topology.
\end{definition}

Strongly spectral equivalent matrices have outliers converging to the same positions \cite[Proposition 2.1]{CollinsMale}. We denote by $\omega(t) = \frac{ e^{-t^2/2}}{\sqrt{2\pi}}$ the density of a real standard Gaussian random variable. A random matrix is say to be a \emph{standard Gaussian matrix} it is has i.i.d. real standard Gaussian entries.

\begin{theorem}\label{Th:EtatArt} A random  $Y(h)$ as in Definition \ref{MatrixModel} satisfying Hypothesis \ref{WignerLikeModel} is  spectral equivalent to  
	$$ Y^{\square\mrm{lin}}(h)  + Y^{\square\mrm{per}}(h),$$
where $Y^{\square\mrm{lin}}(h) = \sqrt{\theta_2(h)} \frac{ W^{\mrm{Gau}} X^{\mrm{Gau}}}{\sqrt{N_0N_2}}$ and $Y^{\square\mrm{per}}(h) =  \sqrt{\theta_1(h)-\theta_2(h)} \frac{ Z^{\mrm{Gau}}}{\sqrt{N_0}}$.
The matrices $W^{\mrm{Gau}}, X^{\mrm{Gau}}$ and $Z^{\mrm{Gau}}$ are independent standard Gaussian matrices and we have set
	\eq
		\theta_1(h) = \int_\mbb R h^2(t)  \omega(t) \mrm d t , &\quad& \theta_2(h) = \Big(\int_\mbb R h'(t) \omega(t) \mrm d t\Big)^2.
	\qe
Moreover, if the third moment of the entries of $W$ and $X$ is zero, then $Y(h)$ is strongly spectral equivalent to 
	$$Y^{\square}(h)=Y^{\square\mrm{lin}}(h)  + Y^{\square\mrm{per}}(h) +Y^{\square\mrm{def}}(h),$$
where $Y^{\square\mrm{def}}(h)$ is an explicit matrix of rank 2.
\end{theorem}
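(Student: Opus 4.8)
The statement is due to \cite{Peche19} for the spectral equivalence and to \cite{BenPec22} for the strong version; the plan is to recover it by passing through the Hermite decomposition of $h$ and isolating, inside $Y(h)$, a linear term, a Gaussian-chaos term and a finite-rank remainder. Put $M:=\frac{WX}{\sqrt{N_0}}$, so that $M(i,j)=\frac1{\sqrt{N_0}}\langle W_{i,\cdot},X_{\cdot,j}\rangle$; by the central limit theorem and Hypothesis \ref{WignerLikeModel} each such entry converges to a real standard Gaussian variable and has uniform sub-Weibull tails. Expanding $h=\sum_{\ell\geq 0}a_\ell\,\mrm{He}_\ell$ in the $L^2(\omega)$-orthonormal Hermite basis, one has $a_0=\int_\mbb R h\,\omega$, $a_1=\int_\mbb R h'\,\omega$ (Stein's identity, since $\mrm{He}_1(t)=t$), $\sum_{\ell\geq 0}a_\ell^2=\theta_1(h)$ and $a_1^2=\theta_2(h)$. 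Entry-wise evaluation being linear in the test function, $Y(h)=\sum_{\ell\geq 0}a_\ell\,Y(\mrm{He}_\ell)$, and the analyticity plus derivative-growth bound of Hypothesis \ref{WignerLikeModel}, together with the tail bound on the entries of $W$ and $X$, are exactly what let one make sense of this series (in operator norm, in probability) and truncate it uniformly in $N$.

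The low-degree blocks are easy. The constant block $Y(\mrm{He}_0)=\frac{a_0}{\sqrt{N_2}}\,\one_{N_1\times N_2}$ has rank one, hence is invisible to the empirical eigenvalue distribution of $Y(h)Y(h)^t$; it resurfaces, together with one further finite-rank correction, in the deformation matrix $Y^{\square\mrm{def}}(h)$ of the strong statement. The linear block is $Y(\mrm{He}_1)=a_1\,\frac{WX}{\sqrt{N_0N_2}}$, which up to the (irrelevant) sign of $a_1$ is $Y^{\square\mrm{lin}}(h)$ with $W,X$ replaced by true Gaussian matrices; the equality of the two limiting spectra is a Lindeberg replacement, the entries of $W$ and $X$ sharing their first two moments with standard Gaussians (when the third moment also vanishes the replacement can be pushed to the spectral edge, which is what the strong statement needs).

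The core is the chaos block $Y(\mrm{He}_{\geq 2}):=\sum_{\ell\geq 2}a_\ell\,Y(\mrm{He}_\ell)$: I would show that it is (strongly) spectral equivalent to an i.i.d.\ Gaussian matrix, via the moment method. Expand $\frac1{N_1}\esp\,\Tr\big[(Y(\mrm{He}_{\geq 2})Y(\mrm{He}_{\geq 2})^t)^k\big]$ as a sum over closed walks and then expand each factor $\mrm{He}_\ell(M(i,j))$ into monomials in the entries of $W$ and $X$. The orthogonality relations $\esp[\mrm{He}_\ell(G)\mrm{He}_m(G)]=\delta_{\ell m}$ for a Gaussian $G$ force the surviving diagrams to be those in which distinct entries $\mrm{He}_\ell(M(i,j))$ pair up as if they were independent: the diagrams that try to exploit a row of $W$ or a column of $X$ shared by two different entries are annihilated to leading order by this orthogonality, while the non-Gaussianity of the entries of $W,X$ — controlled by the sub-Weibull tail, with an extra gain when the third moment is zero — contributes only $O(N^{-1})$ or concentrates onto a matrix of fixed rank. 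What remains is precisely the rectangular Marchenko--Pastur combinatorics of an i.i.d.\ matrix, plus (for the refined statement) a bounded number of ``defect'' diagrams. The same expansion shows that the linear and chaos blocks are asymptotically free, so that the cross terms do not contribute; identifying the limiting variance (the bookkeeping of the Hermite coefficients and the aspect ratios, carried out in \cite{Peche19}) produces exactly $Y^{\square\mrm{per}}(h)$.

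It remains to assemble. For spectral equivalence one discards the rank-one $Y(\mrm{He}_0)$, identifies $Y(\mrm{He}_1)$ with $Y^{\square\mrm{lin}}(h)$ and $Y(\mrm{He}_{\geq 2})$ with an independent i.i.d.\ Gaussian matrix carrying the variance of $Y^{\square\mrm{per}}(h)$, and observes that the limit of $Y(h)Y(h)^t$ depends only on these two independent pieces, hence equals that of $(Y^{\square\mrm{lin}}+Y^{\square\mrm{per}})(Y^{\square\mrm{lin}}+Y^{\square\mrm{per}})^t$. For strong spectral equivalence the moment method is not enough: the extreme eigenvalues and the positions of the finitely many outliers must be tracked through a resolvent/deterministic-equivalent analysis in the spirit of \cite{BenPec,BenPec22}; the vanishing third moment is used there to kill one more low-rank contribution, and the finite-rank pieces that genuinely survive — the constant Hermite mode together with the leading correction of the chaos expansion — assemble into the explicit rank-two matrix $Y^{\square\mrm{def}}(h)$, after which the Hausdorff convergence of spectra gives the matching of outliers through \cite[Proposition 2.1]{CollinsMale}. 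The one serious obstacle is the chaos step: showing that the degree-$\geq 2$ Hermite modes of the entries of $M$ behave spectrally like genuinely independent variables although they are all rigidly built from the same $W$ and $X$; this requires both the exact orthogonality of Hermite polynomials (to remove the naive dependence between entries sharing a row or a column) and the tail hypothesis (to keep the non-Gaussian diagrams subleading), and for the outlier part a genuinely finer tool than moments.
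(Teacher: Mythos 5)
This theorem is not proved in the paper: it is recalled verbatim from \cite{Peche19} and \cite{BenPec22} as background (the paper's own contribution, Theorem \ref{MainTh}, is proved by the traffic/injective-trace machinery of Sections 4--6, not by the route you take). So the only fair comparison is with those references, and with them your outline shares the general philosophy (Hermite decomposition of $h$ at the approximately Gaussian entries of $M=\frac{WX}{\sqrt{N_0}}$, moment method, orthogonality of Hermite polynomials), but as a proof it has a genuine gap exactly where the theorem lives. The chaos step --- that $\sum_{\ell\ge 2}a_\ell\,Y(\mrm{He}_\ell)$ is spectral (let alone strongly spectral) equivalent to an i.i.d.\ Gaussian matrix --- is asserted through the heuristic that Hermite orthogonality ``annihilates'' diagrams in which two entries share a row of $W$ or a column of $X$. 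Orthogonality only kills second-order covariances of the entries $\mrm{He}_\ell(M(i,j))$; in the trace expansion of $\frac1{N_1}\esp\Tr[(YY^t)^k]$ one must control \emph{all} joint moments of dependent, non-Gaussian entries along closed walks, uniformly in the walk length and in the Hermite degrees, and also justify the truncation of the Hermite series uniformly in $N$ (the sub-Weibull tail bound by itself does not give convergence of $\sum_\ell a_\ell Y(\mrm{He}_\ell)$ in operator norm). This diagram control is precisely the technical content of \cite{Peche19,BenPec} and none of it is carried out. The same remark applies to your one-line claim that the linear and chaos blocks are ``asymptotically free'' so that cross terms vanish: the two blocks are built from the same $W,X$, and the vanishing of mixed contributions is again a statement about the full combinatorics, not a consequence of marginal computations.

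Two further points. First, the variance bookkeeping is off unless $\int h\,\mrm d\omega=0$: with an orthonormal Hermite expansion, $\theta_1(h)-\theta_2(h)=a_0^2+\sum_{\ell\ge2}a_\ell^2$, whereas your chaos block carries variance $\sum_{\ell\ge2}a_\ell^2$; moreover if $a_0\neq0$ the constant block has operator norm of order $\sqrt{N_1}$, so it cannot be absorbed into a bounded finite-rank deformation for the strong statement, and Hausdorff convergence of spectra would fail --- the centering hypothesis has to be made explicit rather than pushed into $Y^{\square\mrm{def}}(h)$. Second, for the strong spectral equivalence you explicitly defer to the resolvent analysis of \cite{BenPec,BenPec22}; that is a citation, not a proof, and in particular the identification of the explicit rank-two matrix $Y^{\square\mrm{def}}(h)$ and the role of the vanishing third moment are left entirely to the references. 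In short: the skeleton is the right one (and closer to P\'ech\'e's original argument than to the traffic approach of the present paper), but the two steps you yourself flag as the ``serious obstacle'' are the theorem, and they are not established here.
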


In words, the first statement says that $Y(h)$ is spectral equivalent to the matrix when $h$ is linear plus an independent i.i.d matrix. We refers this as the {\it linear plus chaos} phenomenon. Our work is motivated by the following questions:
\begin{enumerate}
	\item {\bf Stability of the phenomenon.} Do we still have an analogue linear plus chaos phenomenon when the matrices $W$ and $X$ are replaced by more general models with more structure ?  In this article, we consider \emph{profiled matrices}, namely matrices with independent entries where the variance of the entries can varies from one variable to another, see Hypothesis \ref{ProfiledModel}. Working with a different type of equivalent we confirm that the phenom holds.
	\item {\bf Structure of the noise.} Let $h_1$ and $h_2$ be two functions as above, for which both $Y(h_1)$ and $Y(h_2)$ satisfy the linear plus chaos phenomenon. What can be said about the joint distribution of these noises ? Can we find functions for which these noise are independent ? Or functions for which they are coupled ? The roots of this question lie in the very origin of free probability theory which questions the difference between probability spaces generated by different numbers of free random variables \cite{Dyk94}. This article completely characterize a family of independent matrices that generate the noise arising from profiled Pennington-Worah matrices.
	\item {\bf Description of the phenomenon.} What is the intrinsic reason for which Pennington-Worah matrices to exhibit such a simple behavior ? To progress in this question, we propose in Section [X] a decomposition of a Pennington-Worah as the sum of 
		$$Y(h) = Y^{\mrm{lin}} + \sum_{m\geq 2} Y_m^{\mrm{per}}(h) + Y^{\mrm{def}}(h) + \epsilon(h),$$
an we state in Section X the joint convergence of each ingredients of the above sum toward an ingredient of a linear plus chaos decomposition. Our interpretation of the emerges of chaos is presented in Remark X.
\end{enumerate}

\section{Hypotheses and definition of traffic equivalence}

\subsection{Matrix model and decomposition}

\subsubsection{Model of variance profiled matrices}
We present in this section our model. 

\begin{hypothesis}\label{ProfiledModel} The two random rectangular matrices $W$ and $X$ can be written
	\eq
	W = \Gamma_w \circ W', \quad X = \Gamma_x \circ X'
	\qe
where $\circ$ denotes the entry-wise product of matrices.
	\begin{enumerate}
	\item The matrices $W$ and $X$ are respectively of size $N_1\times N_0$ and $N_0\times N_2$, and setting $N=N_0+N_1+N_2$ the sequences defined by $\psi_i = \frac{N_i}N,i=0,1,2$, converge to positive number.
	\item The matrices of $  W'$ and $   X'$ independent and have centered real i.i.d. entries with variance one, the laws of their entries do not depend on $N$ and have finite moments of all order. 
	\item The entries of $\Gamma_w$ and $\Gamma_x$ are bounded, and $(\Gamma_w, \Gamma_x)$ converges in graphons topology, see Definition \ref{GraphonsDistrCarre} below.	
This holds if $\Gamma_w= \big( \gamma_w(\frac{i}N, \frac{j}N) \big)_{i,j}$ and $\Gamma_x=\big( \gamma_x(\frac{i}N, \frac{j}N) \big)_{i,j}$, where $\gamma_w$ and $\gamma_x$ are piecewise continuous maps $[0,1]^2\to \mbb R$. We call $\Gamma_w$ and $\Gamma_x$ the \emph{variance profiles} (or simply profiles) of $w$ and $x$. 
\end{enumerate}
\end{hypothesis}

\subsubsection{Hermite polynomials}
We recall the special role of Hermite polynomials $(g_n)_{n\geq 0}$ to understand Theorem \ref{Th:EtatArt}. Recall that $\omega(t) = \frac{ e^{-t^2/2}}{\sqrt{2\pi}}$ and
	$$ g_n : t \mapsto   \frac{({-1})^n\omega^{(n)}(t)}{\omega(t)}, \quad n\geq 0,$$
where $\omega^{(n)}$ denotes the $n$-th derivative of $\omega$. The collection $(g_n)_{n\geq 0}$ is an orthonormal basis of the space of $\mbb C[y]$ with respect to the standard Gaussian law, normalized such that
	$ \int_\mbb R   g_n g_m   \mrm d\omega = \delta_{n,m} n!.$
The symbol $\delta_{n,m}$ stands for the usual Kronecker symbol. For any $n\geq 1$, we have $g'_n = n g_{n-1}, \quad \forall n\geq 1$ so in particular,  $\int_\mbb R    g_n^{(m)}(t) \omega(t) \mrm d t = \delta_{n,m}n!$ for any $m,n\geq 1$. Hence, for any $h:\mbb R\to \mbb R$ in $\mrm{L}^2(\mrm d\omega)$, 
	$$\frac{ \int h^{(n)}(t) \omega(t) \mrm d t}{n!}$$
is the coefficient of to the $n$-th Hermite polynomial in the basis $(g_n)_{n\geq 0}$.

 For any $h:\mbb R \to \mbb R$, let $\tilde h := \frac{ h- \sqrt \theta g_1}{\sqrt{ 1 - \theta}}$ the orthogonal projection of $h$ on the orthogonal of $g_1$ with respect to $\mrm d \omega$. Since $Y(h)$ is linear in $h$, it is the sum of the matrices
	$$ Y(h) = \sqrt{ \theta} Y(g_1) + \sqrt{ 1- \theta} Y(\tilde h).$$
From Theorem \ref{Th:EtatArt}, $Y(g_1)$ is spectral equivalent to $ \frac{ W^{\mrm{Gau}} X^{\mrm{Gau}}}{\sqrt{N_0N_2}}$ and $Y(\tilde h)$ is spectral equivalent to $\frac{Z^{\mrm{Gau}}}{\sqrt{N_0}}$. The next section introduces a more precise strategy to decompose the matrix, where the Hermite coefficients show up naturally.

\subsubsection{A decomposition of Pennington-Worah matrices}

Let $Y(h )= \gamma h\big[\{ \gamma_0 W X \} \big]$ be a Pennington-Worah matrix, where $W,X$ are rectangular matrices and  $\gamma, \gamma_0>0$. For the polynomial function $h_n:x\mapsto x^n$, the matrix-entry definition gives the expression
	$$Y(h_n) = \Big( \gamma \gamma_0^n  \sum_{\mbf d \in [N_0]^n} \prod_{\ell=1}^n W(i,d_\ell) X(d_\ell,j)\Big)_{\substack{ i=1\etc N_1 \\ j=1\etc N_2}}.$$
Our decomposition involves several definitions.

\begin{definition}\label{Def:LambdaMatrices}
\begin{enumerate}
	\item A set partition of a set $\mcal X$, simply called a partition, is a set of non-empty subsets of $\mcal X$, called its blocks, whose union is $\mcal X$. We denote by $\mcal P(\mcal X)$ the set of partitions of $\mcal X$. Moreover, for any multi-index $\mbf k=(k_1\etc k_n)$, we denote by $\ker {\mbf k}$ the set partition of $[n]:=\{1\etc n\}$ such that $p \overset{\ker \mbf k} \sim q$ if and only if $k_p = k_q$. 
	\item An  integer partition of an integer $n\geq 1$ is a non-increasing tuple  $\lambda^{(n)}  = (\lambda_1\etc \lambda_{\ell})$ of integers, called its parts, which sum up to $n$. We denote $\lambda^{(n)} \vdash n$ to say that $\lambda^{(n)}$ is an integer partition of $n$. Moreover, a set partition $\pi$ of $[n]$ is said to be of type $\lambda^{(n)}$ whenever $\lambda^{(n)}$ is its sequence of blocks size. 
	\item For any integer partition $\lambda^{(n)}$ of $n$ and any partition $\pi_0$ of type $\lambda^{(n)}$, we set
	\eq
		Z(\lambda^{(n)}) := \Big( \sum_{\substack{ \mbf d \mrm{ \, s.t. \, } \\ \ker(\mbf d)=\pi_0}} \prod_{\ell=1}^n W(i,d_\ell) X(d_\ell, j) \Big)_{\substack{ i=1\etc N_1 \\ j=1\etc N_2}}.
	\qe
\end{enumerate}
\end{definition}
The above expression clearly depends only on the type of $\pi_0$. Therefore we have a canonical decomposition 
	$$Y(h_n) =  \sum_{\lambda^{(n)} \vdash n}  c_{\lambda^{(n)}}  \times \gamma \gamma_0^nZ( \lambda^{(n)}),$$
where $c_{\lambda^{(n)}}$ is the number of $\pi \in \mcal P(n)$ of type $\lambda^{(n)}$. 
In particular  for $ \lambda_2^{(n)}:=(2 \etc 2)\vdash n$, then $c_{\lambda^{(n)}}$ is the number of pair partitions of $[n]$, which is equal to $ \int_{\mbb R}  h_n( t) \mrm d \omega (t)$. Moreover, for any $n\geq 1$, with $ \lambda_1^{(n)}:=(2 \etc 2, 1)\vdash n$ and $ \lambda_m^{(n)}:=(m,2 \etc 2)\vdash n$, for $m\geq 2$, we have
	$$c_{\lambda^{(n)}_m} =  \binom{n}m \int_{\mbb R} t^{n-m} \mrm d \omega (t) = \frac 1 {m!} \int_{\mbb R}  h_n^{(m)}( t) \mrm d \omega (t), \quad m\neq 2,$$
 where $h_n^{(m)}$ is the $m$-th derivative of $m$. For any $1<m\leq n$ we denote $ \lambda_1^{(m,n)}=(2 \etc 2, 1\etc 1)\vdash n$ the integer partition with $m$ parts equal to one, for which we also have $ c_{\lambda_1^{(m,n)}} = c_{\lambda^{(n)}_m}$.
\begin{definition}\label{APRIORI} For any $h = \sum_{n\geq 1} a_n h_n$, odd polynomial or an analytic function satisfying Hypothesis \ref{WignerLikeModel}, we denote 
	$$Y(h) =Y^{\mrm{lin}}(h)+ \sum_{m\geq 1} Y^{\mrm{per}}_m(h) +  Y^{\mrm{def}}(h) +\epsilon(h),$$
	where we have set
	\eq
		Y^{\mrm{lin}}(h)& =&   \sum_{n\geq 1} a_n \Big(  \int_{\mbb R}  h_n'( t) \mrm d \omega (t) \Big) \times   \gamma \gamma_0^{n}Z( \lambda_1^{(n)}),\\
		Y^{\mrm{per}}_m(h)& = &  \sum_{n\geq 1} \frac{a_n}{m!} \Big(  \int_{\mbb R}  h_n^{(m)}( t) \mrm d \omega (t) \Big) \times  \gamma \gamma_0^{n}Z( \lambda_1^{(m,n)}), \quad \forall m\geq 2\\
		Y^{\mrm{def}}(h) &=& \sum_{n\geq 1} \frac {a_n} 6  \Big(  \int_{\mbb R}  h_n'''( t) \mrm d \omega (t) \Big)\times  \gamma \gamma_0^{n}Z( \lambda_3^{(n)}),\\
		\epsilon(h) & =&   \sum_{n\geq 1} \sum_{\substack{ \lambda^{(n)} \vdash n \\ \lambda^{(n)}\neq  \lambda_3^{(n)}, \\ \lambda^{(n)}\neq \lambda_1^{(m,n)} \, \forall m\geq 1  }} a_n c_{\lambda^{(n)}}\times  \gamma \gamma_0^{n} Z(\lambda^{(n)}).
	\qe
\end{definition}
Our main result  shows that an asymptotic equivalent for these matrices in a sense that is clarified next section, for which the weights can be computed by simple asymptotic rules for the matrices $Z(\lambda^{(n)}) $. 

\subsection{Traffic equivalent}

A difficulty arise to consider the decomposition of the previous section from the algebraic aspect. We shall go beyond free probability and use the notion of \emph{traffic equivalent} to fit the nature of the Pennington-Worah matrix decomposition, by introducing a generalization of non-commutative polynomials. Although  this notion is not a rigorously an intermediate notion between spectral and strong spectral equivalence, the reader can skip this section with this idea in mind, without major consequence for the understand of next section (note that our approach does not prove the convergence of outliers but gives a proposal for the matrix deformation).

A graph is a couple $(V,E)$ where $V$ is non empty set called the vertex set, and $E$ is a multi-ensemble of couples of elements of $V$, possibly empty, called the edge set. Multi-ensemble means that each element appear with a given multiplicity. The graph are directed: for $e=(v,w)\in E$, we call $v$ the source of $e$, $w$ its target. 

\begin{definition}\label{TestGraphCarre} Let $\Omega$ be a  label set and $\mbf x=(x_\omega)_{\omega \in \Omega}$ a collection of formal variables.
\begin{enumerate}
	\item A \emph{test graph} labeled by $\Omega$ (or in the variables $\mbf x$) is a triplet $T=(V,E,\gamma)$ where $(V,E)$ is a graph,  and $\gamma:E \to \Omega$ is a map associating the variable $x_{\gamma(e)}$ to the edge $e\in E$. 
		\item A \emph{graph monomial} labeled by $\Omega$ is the couple $g=(T, \mrm{in}, \mrm{out})$ where $ \mrm{in}$ and $ \mrm{out}$ are two vertices of the test graph $T$. They are respectively called the \emph{input} and \emph{out} of $g$. 
\end{enumerate}
We denote by $\mcal G\langle \Omega \rangle$ the set of \emph{connected} graph monomial labeled by $\Omega$ and by $\mbb C\mcal G\langle \Omega \rangle$ the vector space generated by $\mcal G\langle \Omega \rangle$.
\end{definition}

The following definition shows how we can evaluate a graph polynomial in matrices to define a new matrix, generalizing the matrix product.

\begin{definition}\label{TrafficDistrCarre} Let $\mbf A = (A_\omega)_{\omega \in \Omega}$ be a collection of matrices $\mbb M_N(\mbb R)$ and let $g=(T, \mrm{in}, \mrm{out}) \in \mcal G\langle \Omega \rangle$ be a graph monomial, where $T=(V,E,\sigma)$. The \emph{evaluation of $g$ in the family $\mbf A_N$} the matrix $g(\mbf A_N)$ with entry $(i,j)\in [ {N}]^2$ 
	\eqa\label{Eq:EvalGraphPolyn}
		g(\mbf A_N) (i,j) & = & \sum_{ \substack{ \varphi : V \to [{N}]  \mrm{ \ s.t. } \\ \varphi(\mrm{out})=i, \varphi(\mrm{in})=j}}\prod_{e=(v,w) \in E} A_{\gamma(e)}\big( \varphi(w) , \varphi(v) \big).
	\qea
Assume that the entries of the matrices have finite moments of all orders. We call \emph{traffic distribution} of $\mbf A$ the map
	\eqa\label{Eq:TrafficDistribution}
		 \Phi_{\mbf A} : g \in  \mcal  G\langle \Omega \rangle  \mapsto   \esp \left [ \frac 1 N  \Tr\big[ g(\mbf A_N) \big] \right] \in \mbb R. 
	\qea
We say that $\mbf A$ \emph{converges in traffic distribution} whenever $ \Phi_{\mbf A}(g)$ converges for all $g \in  \mcal  G\langle \Omega \rangle$ as the size $N$ goes to infinity, and we say that $\mbf A$ and $\mbf B$ are \emph{traffic equivalent} if they converge the same limit.
\end{definition}

\begin{definition}\label{GraphonsDistrCarre} A collection $\mbf \Gamma$ of deterministic matrices \emph{converges in graphon distribution} whenever for any test graph $T$ labeled by $\Omega$,
	\eqa\label{Eq:GraphonDistribution}
		  \Nlim \esp \Big[ \prod_{e=(v,w) \in E} \Gamma_{\gamma(e)}\big( \Phi(w) , \Phi(v) \big) \Big] \mrm{ \ exists},
	\qea
where $\Phi: V \mapsto [N]$ is a \emph{random injective} map uniformly distributed. 
\end{definition}

\begin{example}\label{ExFonda} Let $Y = h[\{ WX\}]$, where $W$ and $X$ are square matrices and $h_n: n\mapsto x^n$. The entry-wise definitions imply that $Y =g_n(W,X)$ where $g_n$ is the graph monomial in two variables $w,x$ is as follow:  its vertex set is $\{\mrm{in}, \mrm{out}, v_1 \etc v_n\}$ and for each $i=1\etc n$, there is one edge with source $\mrm{in}$ and target $v_i$ labeled $x$, and one edge with source $v_i$ and target $\mrm{out}$ labeled $w$, see Figure \ref{F01_PWmatrix}. We call $v_1\etc v_n$ the \emph{internal vertices} of $g_n$.
\end{example}

  \begin{figure}[h]
    \begin{center}
     \includegraphics[scale=.75]{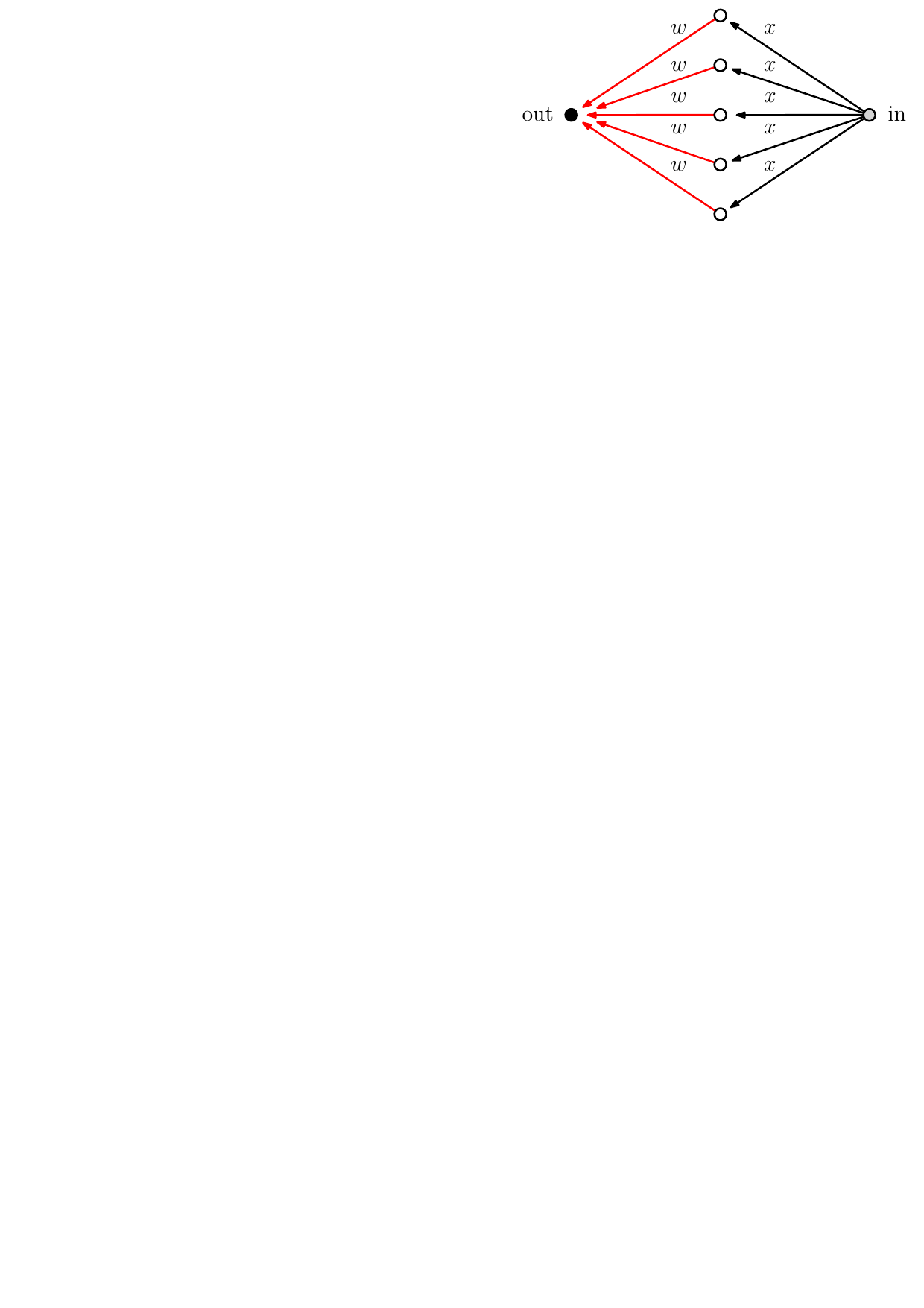}
    \end{center}
    \caption{The graph monomial $g_n$ of Pennington-Worah matrices. Note the design symmetry that the graph monomial is unchanged if we permute the internal vertices. Edges colors serves to distinguish edges labels.}
    \label{F01_PWmatrix}
  \end{figure}

The rest of the section defines how we see rectangular matrices as sub-matrices of large square matrices. Let $N_0,N_1,N_2$ three matrix size integers as in Section \ref{Sec:Intro} and set $N = N_0+ N_1 + N_2$. A matrix of $M_N(\mbb R)$ is seen as a $3\times 3$ rectangular block matrix 
\eq
	A = \left ( \begin{array}{ccc} A_{0,0} & \cdots & A_{2,2} \\ \vdots & & \vdots \\ A_{2,0} & \cdots &  A_{2,2} \end{array}\right),
\qe
where $A_{i,j} \in M_{N_i,N_j}(\mbb R)$ for each $i,j=0,1,2$. If $B\in M_{N_i,N_j}(\mbb R)$, we set $\upiota_{i,i'}(B) \in M_N(\mbb R)$ the matrix $A$ such that $A_{i,j}=B$ and $A_{i',j'}=0$ for $(i',j')\neq (i,j)$ in the block decomposition. 
Let  $\Omega$ be a label set and $\mbf s=(s_\omega,s'_\omega)_{\omega \in \Omega} \in  \{0,1,2\}^2$ be a collection of indices. We say that $ \mbf A= (A_\omega)_{\omega\in \Omega}$ is a collection of  {$\mbf s$-rectangular matrices} if $A_\omega$ is of size $N_{s_\omega}\times N_{s'_\omega}$ for all $\omega \in \Omega$ and we set $ \upiota_{\mbf s}( \mbf A) = \big( \upiota_{(s_\omega,s_\omega')}( A_\omega)\big)_{\omega\in \Omega}$. We  call traffic distribution of a collection $\mbf A$ of $\mbf s$-rectangular matrices  the traffic distribution of $\upiota_{\mbf s}(\mbf A)$. Similarly, the definition of graphon convergence extends for rectangular matrices. 

\section{Presentation of the results}

\subsection{Statement of our result}
We say in short that a random matrix is a standard Gaussian matrix if it has i.i.d. centered real Gaussian entries of variance one. For a matrix $A$ and an integer $k\geq 1$ with denote $\sqrt[\circ]{ A} := \big(\sqrt{ A(i,j) } \big)_{i,j}$ and $A^{\circ k} = \underbrace{A \circ \dots \circ A}_{k \mrm{ \ times}}$.
\begin{theorem}\label{MainTh} Let $\mbf Y = \big( Y(h) \big) _{h \in \mbb C^{(\mrm{odd})}[y]}$ be the collection of matrices indexed by odd polynomials  
	\eq
		Y(h) =  \frac {\sqrt{\psi_0}} {\sqrt{N}} h \left[ \left\{ \frac{ W X}{\sqrt{N_0}}  \right\} \right] ,  \quad \psi_0 = \frac{N_0}N,
	\qe
where $W = \Gamma_w \circ W'$ and $X = \Gamma_x \circ X'$ satisfy \ref{ProfiledModel}. We denote by $\mbf Y^{\mrm{lin}}$, $\mbf Y^{\mrm{per}}$, $\mbf Y^{\mrm{def}} $ and $\boldsymbol \epsilon$ the collections of matrices of Definition \ref{APRIORI} with $\gamma = \frac {\sqrt \psi_0}{\sqrt {N}}$ and $\gamma_0 = \frac 1 {\sqrt {N_0}}$.  

Then $\big( \mbf Y^{\mrm{lin}} ,\mbf Y^{\mrm{per}} , \mbf Y^{\mrm{def}}, \boldsymbol \epsilon \big)$ is \emph{traffic equivalent} to $\big( \mbf Y^{\lozenge\mrm{lin}} , \mbf Y^{\lozenge \mrm{per}} , \mbf Y^{\lozenge \mrm{def}}, \mbf 0 \big)$ where $\mbf Y^{\lozenge \mrm{lin}}$ and $\mbf Y^{\lozenge \mrm{per}}$ are independent, $\mbf Y^{\lozenge \mrm{def}}$ is deterministic, and are defined as follows.  Setting the bounded matrix $M_2 := \sqrt[\circ]{  N_0^{-1}  \Gamma_w^{o2} \times \Gamma_x^{o2}}$, we have 
		$$ Y^{\lozenge \mrm{lin}}(h) := \bigg( \int_{\mbb R}   h'\Big[ \Big\{ t M_2 \Big\} \Big] \frac{e^{-\frac{t^2}2}}{\sqrt{2\pi}} \mrm dt \bigg) \circ \Big( \frac{W^{\mrm{Gau}}}{\sqrt N}  \times \frac{X^{\mrm{Gau}}}{\sqrt N} \Big).$$
where $W^{\mrm{Gau}} = \Gamma_w \circ W^{\mrm{Gau}}$, $X^{\mrm{Gau}} = \Gamma_x \circ X^{\mrm{Gau}}$, and $  W^{\mrm{Gau}}$, $  X^{\mrm{Gau}}$ are independent standard Gaussian matrices. 
	Moreover, for any $m\geq 2$, we have 
	\eq
		Y^{\lozenge \mrm{per}}_m(h)  := \frac 1 {m!} \bigg( \int_{\mbb R}   h^{(m)}\Big[ \Big\{ t M_2 \Big\} \Big] \frac{e^{-\frac{t^2}2}}{\sqrt{2\pi}} \mrm dt \bigg)\circ \frac{Z_m^{\mrm{Gau}}}{\sqrt N},
	\qe
	where $Z_m^{\mrm{Gau}}, m\geq 2,$ are independent standard Gaussian matrices.
	 Finally, setting  $\Lambda_3 = N^{-1}  \Gamma_w^{o3} \times \Gamma_x^{o3}$, we have
		\eq
		Y^{\lozenge \mrm{def}}(h) & := &   		 \frac {m_w^{(3)} m_x^{(3)}}{6N}  \Lambda_3\circ \bigg( \int_{\mbb R}   h'''\Big[ \Big\{ t M_2 \Big\} \Big] \frac{e^{-\frac{t^2}2}}{\sqrt{2\pi}} \mrm dt \bigg).
	\qe
\end{theorem}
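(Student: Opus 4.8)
The strategy is to compute the limiting traffic distribution of the whole family $(\mathbf Y^{\mathrm{lin}},\mathbf Y^{\mathrm{per}},\mathbf Y^{\mathrm{def}},\boldsymbol\epsilon)$ directly from the graph-monomial expansion and to check it coincides, term by term, with the one of $(\mathbf Y^{\lozenge\mathrm{lin}},\mathbf Y^{\lozenge\mathrm{per}},\mathbf Y^{\lozenge\mathrm{def}},\mathbf 0)$. Since the traffic distribution is determined by the numbers $\Phi_{\mathbf A}(g)=\esp[\frac1N\Tr g(\mathbf A_N)]$ for connected graph monomials $g$, and since every $Y(h)$ and every $Z(\lambda^{(n)})$ is already written as a (linear combination of) evaluations of explicit graph monomials in the pair $(W,X)=(\Gamma_w\circ W',\Gamma_x\circ X')$, substituting these into an arbitrary test graph $g$ produces a larger test graph in the entries of $W'$, $X'$ and the deterministic profiles $\Gamma_w,\Gamma_x$. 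The first step is therefore purely bookkeeping: write $\Phi(g)$ as a sum over maps $\varphi$ from the (expanded) vertex set to $[N]$ of $\esp[\prod W'(\cdots)X'(\cdots)]$ times a product of profile entries $\Gamma_w(\cdots),\Gamma_x(\cdots)$.

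The second step is the combinatorial heart of the proof. Taking the expectation over the i.i.d. centered entries of $W'$ and $X'$ forces a pairing (more precisely a partition into blocks of size $\ge 2$) of the $W'$-edges among themselves and of the $X'$-edges among themselves; by the usual genus/Euler-characteristic argument the dominant contribution in the scaling $\gamma=\sqrt{\psi_0}/\sqrt N$, $\gamma_0=1/\sqrt{N_0}$ comes from pairings (blocks of size exactly $2$), which are moreover non-crossing in the appropriate sense and identify the summation variable $\varphi$ so that the resulting quotient test graph is a tree-like structure; blocks of size $\ge 3$ are subleading except in the single case producing $Z(\lambda_3^{(n)})$, whose surviving weight is exactly $m_w^{(3)}m_x^{(3)}$, and any $\lambda^{(n)}$ not of the form $\lambda_1^{(m,n)}$ or $\lambda_3^{(n)}$ contributes $o(1)$ — this is precisely why $\boldsymbol\epsilon$ is traffic-equivalent to $\mathbf 0$. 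On the surviving configurations, the sum over the internal summation indices $d_\ell\in[N_0]$ that are free (not pinned by the $\Gamma$'s) converges, by the graphon hypothesis on $(\Gamma_w,\Gamma_x)$ and a Riemann-sum argument, to the integral against Lebesgue measure of the appropriate product of $\gamma_w,\gamma_x$; this is what turns $N_0^{-1}\Gamma_w^{o2}\times\Gamma_x^{o2}$ into the matrix $M_2$, and the count of pair partitions compatible with a given block of $m$ unpaired $W'X'$-factors produces exactly the Hermite moment $\frac1{m!}\int h^{(m)}(tM_2(i,j))\,\omega(t)\,dt$ entry-by-entry. One then recognizes the right-hand side as the traffic distribution of a Gaussian graphon model: a single Hermite-degree-one part gives $\mathbf Y^{\lozenge\mathrm{lin}}$ (a profiled product of two independent Gaussians), each higher degree $m$ gives an independent profiled Gaussian matrix $Z_m^{\mathrm{Gau}}$, mutual independence of all these limiting objects following from the fact that the corresponding graph monomials live on disjoint edge-label sets and mixed moments factorize, and the third-moment term gives the deterministic rank-type correction $\mathbf Y^{\lozenge\mathrm{def}}$ with profile $\Lambda_3=N^{-1}\Gamma_w^{o3}\times\Gamma_x^{o3}$.

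The final step is to assemble these computations and check that the joint limiting traffic distribution of $(\mathbf Y^{\lozenge\mathrm{lin}},\mathbf Y^{\lozenge\mathrm{per}},\mathbf Y^{\lozenge\mathrm{def}},\mathbf 0)$ — which can be computed independently, again by expanding into graph monomials and pairing Gaussians via Wick's formula — matches the limit obtained in step two for arbitrary connected $g$ and arbitrary choice of $h$'s labelling the edges. Because everything is multilinear in the polynomials $h$, it suffices to do this for the monomials $h=h_n$, where the graph monomial $g_n$ of Example \ref{ExFonda} makes the expansion completely explicit.

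The main obstacle I expect is the genus-type estimate showing that only $\lambda_1^{(m,n)}$ and $\lambda_3^{(n)}$ survive and that, on those, crossing pairings are negligible: one must carefully track the interplay between (i) the Euler characteristic of the map formed by the pairing, (ii) the number of free $[N_0]$-indices (governed by the relative scaling $N_0/N\to\psi_0$), and (iii) the constraints imposed by the deterministic profiles $\Gamma_w,\Gamma_x$, which can pin vertices together and thereby change the effective power of $N$. Keeping this bookkeeping uniform over all connected test graphs $g$ and all degrees $n$ (so that the infinite sum over $n$ defining $Y(h)$ for a polynomial $h$ is controlled) is where the real work lies; the passage from the surviving sums to integrals against $\omega$ and against Lebesgue measure, and the identification of independence, are then comparatively routine.
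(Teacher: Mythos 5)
Your outline is the same route the paper takes (expand test graphs evaluated on the decomposition into entries of $W'$, $X'$ and the profiles, classify the partitions that survive the scaling, then match against a Wick computation for the profiled Gaussian equivalents), but the two steps you defer to "the usual genus/Euler-characteristic argument" and to "comparatively routine" are precisely where the content lies, and as stated they would fail. For a general connected test graph the dominant quotients are \emph{not} trees selected by non-crossing pairings: they are pseudo-cacti, and the bound $\eta(\pi)\leq 0$ cannot be read off a single Euler-characteristic count. The paper must split $\eta=\eta_1+\eta_2$ over two auxiliary quotient graphs (the subgraph of $w$-labeled edges and the quotient $T^{\tilde\rho(\pi)}$ of the reference graph), and the positive contribution of the simple edges of $T^{\tilde\rho(\pi)}$ is compensated by forced multiplicity-$3$ groups inside the corresponding niches (Lemma \ref{simpedge}, Lemma \ref{MultiplicityInW}, together with the separation arguments). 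This is also why the third-moment term is exactly of order one --- it sits on the cut edges of the limiting pseudo-cactus --- rather than a subleading effect that happens to survive, and it is what pins down which $\lambda^{(n)}$ contribute, i.e.\ why $\boldsymbol\epsilon\to\mathbf 0$. Your anticipated obstacle (iii) is moreover a red herring: bounded profiles never change the power of $N$; since $W',X'$ have i.i.d.\ entries the profile contribution decouples as a bounded $\delta^0$-weight whose limit is given by graphon convergence, while the genuine bookkeeping difficulty you do not name is the interplay between internal ($[N_0]$) and reference ($[N_1],[N_2]$) vertices through the two quotients above.

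The identification step is also not routine. The profiled equivalents $\mbf Y^{\lozenge\mrm{lin}},\mbf Y^{\lozenge\mrm{per}},\mbf Y^{\lozenge\mrm{def}}$ are not bi-permutation invariant, so you cannot invoke an existing asymptotic traffic-independence theorem, and "graph monomials on disjoint edge-label sets have factorizing mixed moments" is not an argument at the level of traffic distributions of such matrices. The paper instead proves a substitution property for $\delta^0$ (Lemma \ref{ProfileTrick}) to turn the niche subgraphs into entrywise profile factors, redoes the joint computation through the colored-component argument, and, crucially, identifies the covariance structure of the noise by diagonalizing $f(h_1,h_2)=\esp[h_1(\xi)h_2(\xi)]-\esp[h_1'(\xi)]\,\esp[h_2'(\xi)]$ in the Hermite basis; this orthogonality is what makes the $Z_m^{\mrm{Gau}}$ independent across $m$ and separates the length-two-cycle contribution of the linear part from the perturbative one. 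Your proposal never specifies a mechanism for this independence, nor how the joint (cross) terms between the pieces of the decomposition are controlled, so the skeleton is right but these two mechanisms must be supplied for the proof to go through.
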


The expression of $Y^{\lozenge \mrm{lin}}(h)$ is linear in the matrix $ \frac{W^{\mrm{Gau}}}{\sqrt N}  \times \frac{X^{\mrm{Gau}}}{\sqrt N}$, but now compared to Theorem \ref{Th:EtatArt} the linear relation means an entry-wise product. Similarly, the noise part $Y^{\lozenge \mrm{def}}(h)$ is now a variance profile Gaussian random matrix. The deformation has finite rank when the profiles are constant, and its entries are $O(N^{-1})$. This deformation is known from \cite{Male2020} to do not change the limit of the limiting singular-values distribution.

\begin{example} In the context of Theorem \ref{MainTh}, assume that one of the random matrices has a constant variance profile, for instance $\Gamma_x(i,j)=1$ for all $i,j$. Therefore we see that the profile of the linear part is a rank one matrix: setting $D(h)$ the diagonal matrix whose diagonal entries coincide with those of this profile matrix, we get that $Y(h)$ is trafic equivalent to
	\eqa\label{Eq:MainTh}
	 \frac{ W^{\mrm{Gau}} X^{\mrm{Gau}}}{N} D(h) + \Theta(h) \circ\frac{ Z^{\mrm{Gau}}}{\sqrt{N}} + B(h),
	\qea
where $\Theta$ is a profile and $B(h)$ a deformation that dot not change the limiting empirical singular-values distribution.
\end{example}

\subsection{Heuristic and comments}

The contribution can be computed from Definition \ref{APRIORI} with the following approximations.
\begin{enumerate}
	\item The contribution for the linear part can be approximated jointly in traffic distribution thanks to factorizations rules
		\eq
			\gamma \gamma_0^nZ( \lambda_1^{(n)}) & \sim& \frac{ \sqrt{ \psi_0}}{\sqrt{N}} \times \frac 1 {\sqrt{N_0}^n} (W^{\mrm{Gau}}X^{\mrm{Gau}}) \circ \big( \Gamma_w^{o2} \times \Gamma_x^{o2} \big)^{\circ\frac{ (n-1)}2}\\
			& = & \Big( \frac {W^{\mrm{Gau}}} {\sqrt N}  \frac {X^{\mrm{Gau}}} {\sqrt N}  \Big) \circ \Big( \frac 1 {N_0} \Gamma_w^{o2} \times \Gamma_x^{o2} \Big)^{\circ\frac{ (n-1)}2}
		\qe
which indeed gives the expected equivalent
	$$Z^{\mrm{lin}}(h) \sim   \sum_{n\geq 1} a_n \Big(  \int_{\mbb R}  h_n'[\{ t M_2 \}] \mrm d \omega (t) \Big) \circ \Big( \frac {W^{\mrm{Gau}}} {\sqrt N}  \frac {X^{\mrm{Gau}}} {\sqrt N}  \Big) = Y^{\lozenge \mrm{lin}}(h).$$
	\item For the deformation term, we have asymptotically
	$$Z( \lambda_3^{(n)}) \sim \frac {m_w^{(3)} m_x^{(3)}}{6N}  \Lambda_3\circ  M_2^{\circ(n-1)},$$
 so again the definition of $Y^{\mrm{def}}$ results in an entry-wise evaluation formula for $Y^{\lozenge\mrm{def}}$.
	\item Finally, for each $m\geq 2$,
		\eq
			\gamma \gamma_0^nZ( \lambda_1^{(m,n)}) & \sim &   \frac{ \sqrt{ \psi_0}}{\sqrt{N}} \times \frac 1 {\sqrt{N_0}^n} Z( \lambda_1^{(m,m)})\circ  \big( \Gamma_w^{o2} \times \Gamma_x^{o2} \big)^{\circ\frac{ (n-m)}2}\\
			& = &\sqrt{N_0}^{m-1} \times \frac {\psi_0} {N_0^{2m}} Z(\lambda_1^{(m,m)}) \circ \big( \frac 1 {N_0} \Gamma_w^{o2} \times \Gamma_x^{o2} \big)^{\circ\frac{ (n-m)}2}
		\qe
	Denoting $Z = \frac W {\sqrt N_0}  \frac X {\sqrt N_0} = Z_1 \times Z_2$, we have for instance with $m=2$  
		$$\frac 1 {N_0^{2m}} Z(\lambda_1^{(2,2)})   =  \Big( Z \circ Z - Z_1^{\circ 2} \circ Z_2^{\circ 2} \Big),$$
which is known to converges to zero in traffic distribution. Hence the perturbative term can be interpreted as fluctuations around this limit. The computations show that $\sqrt{N_0}^{m-1} \times \frac {\psi_0} {N_0^{2m}} Y(\lambda_1^{(m,m)})$ is traffic equivalent to 
	$$ \big( \frac 1 {N_0} \Gamma_w^{o2} \times \Gamma_x^{o2} \big)^{\circ\frac{ m}2} \circ \Big(  \int_{\mbb R} h_m \mrm d \omega (t) \Big) 
  \circ \frac{Z_m^{\mrm{Gau}}}{\sqrt N},$$
 which gives the expected equivalent.
\end{enumerate}

In the next section we state the generalization of the above theorem in terms of traffic distribution, for which the analog of Equation \eqref{Eq:MainTh} requires a third additional term.

\subsection{P\'ech\'e perspective and free probability}

To emphasis the strength of the equivalent method, recall that free probability gives {\bf a robust and systematic way to compute eigenvalues distributions}. In the context of Theorem \ref{Th:EtatArt}, since the spectral equivalent is a polynomial in independent matrices, the consistent system of equations for the its Stieltjes transform for the distribution of $YY^t$, which are crucial for numerical applications, can be derived thanks to Dan Virgil Voiculescu's free probability theory \cite{Voiculescu1991}. In particular, a linearization trick developed by \cite{BMS17} allows to reformulate this system as the fundamental \emph{subordination property}. This approach is quite robust since for more structured spaces the subordination property holds ''with a twist``, formulated thanks to the powerful concept of \emph{amalgamation} (a non commutative analogue of conditional probability). The method has its limitations, the systems of equations for Stieltjes transforms obtained by free probability may be quite complicated, but it provides explicit algorithms. 
	
	Moreover, {\bf the method extends for strong equivalents.} The strong convergence of independent real Gaussian matrices is known by a result of Catherine Donati-Martin and Mireille Capitaine \cite{CD07} and the latter author show that finite rank deformations can be described in a simple way thanks to the subordination property \cite{Cap13}. Therefore the computation of the outliers for the strong equivalent can be made with the same robust method as for the computation of the eigenvalues distribution.

Beyond free probability, our approach use a specialization of free probability called traffic probability \cite{Male2020}, which is motivated by the \emph{distributional invariance} of Pennington-Worah matrices. Let us denote by $\mcal O = \mcal O_N$ the set  of size $N$ orthogonal real matrices  and by $\mrm{Perm} = \mrm{Perm}_N$ of size $N$ permutation matrices are two compact subgroups $\mrm M_N(\mbb R)$. Let us describe the two following situations, given a collection of random matrices $\mbf A=(A_j)_{j\in J}$ of size $N_1\times N_2$, where implicitly $N_1$ and $N_2$ goes to infinity such that $\frac{N_i}N \limN \psi_i>0$ for an auxiliary parameter $N\longrightarrow \infty$.
\begin{enumerate}
	\item The collection of matrices is \emph{bi-unitarily invariant} in the sense that $\mbf A$ has the same law as $U\mbf AV=(UA_jV)_{j\in J}$ for all orthogonal matrices $U, V$ of corresponding size. {\bf Then we can use Florent Benaych-Georges's rectangular free probability theory} \cite{BG09}, which correspond to the setting of amalgamation over a space of finite dimensional matrices. In the context of Theorem \ref{Th:EtatArt}, we have  \emph{convergence in joint non-commutative distribution }
	\eq
		\big(Y^{\square\mrm{lin}}(h), Y^{\square\mrm{per}}(h) \Big)  \limN  \big(  y^{\mrm{lin}}(h), y^{\mrm{per}}(h) \big)
	\qe
in the sense that for any polynomial $P$ in two non commutative indeterminates consistent with relative matrix sizes, the limit 
	$$\esp\Big[ \frac 1 N P\big( Y^{\square\mrm{lin}}(h), Y^{\square\mrm{per}}(h)\big) \Big] \limN \Phi(P),$$
exists. The map $\Phi$ is a linear form on the space of non commutative polynomials. The framework of non-commutative probability allows to think these indeterminates as random variables by ''mimicking`` classical probability in a non-commutative way: the limit of matrices are called \emph{non-commutative random variables} and their are determined by their \emph{non-commutative distribution} $\Phi$.  Non-commutative random variables  are understood to be in ''generic position`` when there are \emph{freely independent} (or simply free), which means that their distribution satisfies a canonical expression in terms of marginal distribution. The bi-orthogonal invariance of $Y^{\square\mrm{lin}}(h)$ and $Y^{\square\mrm{per}}(h)$ explains that their limit are free.	
	 
	  	\item The collection of matrices is \emph{bi-permutation invariant} in the sense that $\mbf A$ has the same law as $U\mbf AV=(UA_jV)_{j\in J}$ for all permutation matrices $U, V$.  {\bf Then we can use Greg Zitelli rectangular traffic probability theory} \cite{Zit24}. Note that the collection of matrices $Y(h)$ itself is bi-permutation invariant. For a collection $\mcal H$ of functions $h$ and under assumptions stated next section, we have  \emph{convergence in traffic distribution }
	\eq
		\mbf Y = \big (Y(h) \big)_{h\in \mcal H} \limN \mbf y = \big (y(h) \big)_{h\in \mcal H},
	\qe
in the sense the for any function $g$ in a class of functions defined below and called the split \emph{graph-polynomials}, the limit 
	$$\esp\Big[ \frac 1 N g\big(  \mbf  Y \big) \Big] \limN \Phi(g),$$
exists. As graph polynomials generalize non commutative polynomials, the traffic distribution extends the non-commutative distribution. The interest is that the traffic distribution gives much more information, capturing certain finite rank deformations and distinguishing easily bi-unitarily invariant matrices. Traffic distributions comes with a notion of \emph{traffic-independence} which ''encompasses`` \cite{CDM24} three notions of probability:  classical probability referred as \emph{tensor-probability}, the free probability, and the \emph{Boolean-probability} which is another non-commutative non of independence. 

\end{enumerate}

\section{Asymptotic traffic distribution}
\subsection{Definitions and Notations}

Our method is the same as in \cite{BenPec}, using the language of rectangular traffic probability. In this first subsection, we present a slight reformulation of the traffic distribution of matrices relating the square and rectangular cases. We also introduce an important transform, the injective traffic distribution, and the ingredient to link these two notions. We recall that for $i,i'\in \{0,1,2\}$, a $N_i\times N_{i'}$ matrix $A$ is canonically associated a square matrix $\upiota_{i,i'}(A)$ of size $N=N_0+N_1+N_2$ by completing $A$ with zeros.

Let $\mbf s=(s_\omega,s'_\omega)_{\omega \in \Omega} \in  \{0,1,2\}^2$ be a collection of matrix size indices. A test graph $T=(V,E, \gamma)$ labeled in $\Omega$ is \emph{split} (with respect to $\mbf s$) if there is a partition $V = V_0 \sqcup V_1 \sqcup V_2$ such it links edges whose label matches the size indices: for any $e = (v,w) \in E$, $v\in V_p$ and $w\in V_q$ implies $(s_{\gamma(e)}, s'{\gamma(e)}) = (q,p)$. A  map $\phi:V\to [N]$ is split whenever it sends $V_i$ to the range isomorphic to $[N_i]$ in $[N]$.

\begin{definition}\label{TrafficDistrComb}
 \begin{itemize}
	\item Let $\mbf s=(s_\omega,s'_\omega)_{\omega \in \Omega} \in  \{0,1,2\}^2$ and $\mbf A = (A_\omega)_{\omega \in \Omega}$ be a family of $\mbf s$-rectangular matrices, and denote $\tilde  A_\omega = \upiota_{(s_{\omega}, s'_{\omega})} (A_\omega )$ for all ${\omega \in \Omega}$. Let $T=(V,E,\gamma)$ be a split test graph labeled by $\Omega$. We call \emph{combinatorial trace of $T$ in the family $\mbf A_N$} the complex number
	\eqa\label{Eq:EvalTestGraph}
		 {\Tr \big[ T(\mbf A) \big] } & = & \sum_{ \substack{ \varphi : V \to [ {N}] }}\prod_{e \in E}  \tilde A_{\gamma(e)}\big( \varphi(e)\big),
	\qea
where for $e=(v,w)$ we denote $ \varphi(e)=\big( \varphi(w) , \varphi(v)\big)$. The sum can be restricted to split maps since otherwise the summand vanishes. We call \emph{injective trace of $T$ in the family $\mbf A_N$}, and denote $\Tr^0 \big[ T(\mbf A) \big] $, the quantity defined as above when the sum over $\varphi$ is restricted to injective split maps. We denote
	\eq
		\tau_N\big[T(\mbf A)\big] := \esp\left[ \frac 1 N \Tr \big[T(\mbf A)\big]  \right],  \quad \tau^0_N\big[T(\mbf A)\big] := \esp\left[ \frac 1 N \Tr^0 \big[T(\mbf A)\big]  \right].
	\qe
	\item For any test graph $T=(V,E,\gamma)$ and any partition $\pi$ of its vertex set $V$, we denote by $T^\pi=(V^\pi,E^\pi,\gamma^\pi)$ the test graph obtained by identifying vertices in a same block of the partition $\pi$: $V^\pi = \pi$ and each edge $e=(v,w)$ in $E$ induces an edge with source and target the block of $\pi$ that contains $v$ and $w$ respectively. A partition $\pi \in \mcal P(V)$ of $V$ is split whenever $\pi$ does not identify vertices of different $  V_i$'s, namely $\mcal V^\pi =   V_0^\pi \sqcup   V_1^\pi \sqcup   V_2^\pi$. 
\end{itemize}
\end{definition}

There is an abuse of notation using the term "trace" in the theorem above since the combinatorial and the injective traces are not defined on matrix spaces. The combinatorial trace is related to the usual trace of matrices as follows: for any family of matrices $\mbf A$ and any graph monomial $g = (T, \mrm{in}, \mrm{out})$ one has $\Tr \, g(\mbf A_N) = \Tr \big[  T'(A) \big]$, where $  T'$ is obtained from $T$ by identifying $\mrm{in}$ and $\mrm{out}$. On the other hand, for any test graph $T$, we have the identity
	$$\Tr \big[ T(\mbf A) \big] = \sum_{\pi \in \mcal P(V)} \Tr^0 \big[ T^\pi(\mbf A) \big],$$
where $\mcal P(V)$ is the set of partitions of $V$. The sum can be restricted to the split partitions $\pi$ of $V$. 
\section{Proof of the main theorem}

\subsection{Traffic method for profiled Pennington-Worah matrices}\label{PWModel}

Let us set $\mbf Y = (Y_h)_{h \in \mbb R[y]}$ the family of matrices
\eqa\label{Yk}
    Y(h) =  \frac {\sqrt{\psi_0}} {\sqrt{N}} h \left[ \left\{ \frac{ W X}{\sqrt{N_0}}  \right\} \right] ,  \quad \psi_0 = \frac{N_0}N,
\qea
where $W$ and $X$ are as in Theorem \ref{MainTh}, and $h[\{ \, \cdot \,\}]$ stands for the entry-wise evaluation.  Since \eqref{Yk} is a linear expression in $h$, to prove the convergence in traffic distribution of $\mbf Y$ it suffices to prove the convergence of $\esp[\frac 1 N \Tr \big[T(\mbf Y)\big]$ for test graphs indexed by a basis of $\mbb R[y]$. 

We call \emph{reference graphs} the split test graphs $T$ in variables standing for the matrices of $\mbf Y$ labeled by monomials. More precisely, we denote $T = (V,E, \mbf n)$ where the index map $ \mbf n : E \to \mbb N$ tells that an edge $e\in E$ is associated to the matrix $Y_{\mbf n(e)} := Y(h_{\mbf n(e)})$ where $h_n:x\mapsto x^{  n}$. We have a partition $V=V_1\sqcup V_2$ so that each edge of $T$ has its source in $V_2$ and its target in $V_1$. Later at the end of the proof we shall use test graph labeled by Hermite polynomials.

We give an expression of $\esp[\frac 1 N \Tr \big[T(\mbf Y)\big]$ for any reference graph $T$, we use first the so-called substitution property of graph monomials and test graphs (see \cite[Definition 1.7]{Male2020}) to re-write \eqref{Yk} in graph language. The proof of the main theorem follows as we can clearly identify the contribution of each components in the Pennington-Worah matrix decomposition.

In the definition below, we use the terminology of \cite{BenPec}.

\begin{definition}\label{TrondGraph} Given $T=(V,E,\mbf n)$ a reference test graph, we denote by $\mcal T_T= (\mcal V, \mcal E, \delta)$ the (auxiliary) test graph in two variables $w$ and $x$ obtained from $T$ as follows. It is obtained by considering the vertex set of the reference graph (that we then call the \emph{reference vertices}) and adding for each edge $e\in E$ a collection $\mcal N(e) = (\mcal V_e, \mcal E_e)$ of vertices and edges, that is called the \emph{niche} of $e$, defined as follow. There are $\mbf n(e)$ vertices in the niche of $e$, different from the reference vertices and the vertices of other niches. They are called the \emph{internal} vertices. Moreover, the niche of $e$ has $2\mbf n(e)$ edges, as each internal vertex is both 
\begin{itemize}
    \item the source of an edge labeled $w$ whose target is the target of $e$ in $T$,
    \item the target of an edge denoted labeled $x$ whose source is the source of $e$ in $T$.  
\end{itemize}
Two adjacent edges in $\mcal T$ that share the same internal vertex are say to be \emph{companion} each other.
\end{definition}

When there is no ambiguity about the reference graph $T$, we write in short $\mcal T:= \mcal T_T$. The definitions imply that we have
\eqa
	\nonumber \tau_N \big[T(\mbf Y)\big] &=&\Big(\frac{\psi_0}N\Big)^{\frac{|E|}{2}} N_0^{-\sum\limits_{e\in E} \frac{\mbf n(e)}{2}} \tau_N  \big[\mcal T(W,X)\big]\\
	& =& \psi_0^{-\sum\limits_{e\in E} \frac{\mbf n(e)-1}{2}} N^{-\frac{|E|}{2}-\sum\limits_{e\in E} \frac{\mbf n(e)}{2}} \sum_{\pi \in \mathcal{P}(\mathcal{V})} \tau_N^0[\mathcal{T}^\pi(W,X)].\label{Eq:Substitution}
\qea

Recall that Hypothesis \ref{ProfiledModel} tells that $W = \Gamma_w \circ W'$ and $X = \Gamma_X \circ X'$, where $W',X'$ are independent i.i.d. matrices and $\Gamma_w, \Gamma_x$ are bounded matrices. Next we explain how we can factorize the contribution of profiles and work on the traffic distribution of $W'$ and $X'$. 

\begin{definition}\label{Def:delta} For any family of $\mbf s$-rectangular random matrices $\mbf A=(A_\omega)_{\omega \in \Omega}$ and any split test graph $ \mathfrak  T=(\mathfrak V,\mathfrak E,\gamma)$ labeled in $\Omega$, with the same notations as in Definition \ref{TrafficDistrComb}, we set
	\eq
		\delta^0\big[\mathfrak T(\mbf A) \big] & = & \esp\left[ \prod_{e \in\mathfrak  E} \tilde A_{\gamma(e)}\big( \Phi(e)  \big) \right],
	\qe
where $\Phi:\mathfrak V\to [N]$ is a injective split map uniformly distributed at random independently of $\mbf A$. 
\end{definition}
Let us denote $(m)_n= \frac{m!}{(m-n)!}$ (known as the falling factorial notation), which is the number of injective maps from $[n]$ to $[m]$. From the definition of $\tau^0_N$, note that we have
	\eqa
		\tau_N^0\big[\mathfrak T(\mbf A) \big]  &=& \frac 1 N \mrm{Card}\{ \pi :\mathfrak V\to [N] \mrm{\ split, \ injective}\} \delta^0\big[\mathfrak T(\mbf A) \big]  \nonumber\\
		 & = & \frac 1 N  (N_0)_{|\mathfrak V_0|}(N_1)_{|\mathfrak V_1|}(N_2)_{|\mathfrak V_2|} \delta^0\big[\mathfrak T(\mbf A) \big]  \label{DeltaTrickZero}.
	\qea
 On the other hand, for $\mcal T=\mcal T_T$ as in \eqref{Eq:Substitution}, we denote by  $\mcal E^\pi_w$ and $\mcal E^\pi_x$ the set of edges of $\mcal T$ labeled $w$ and $x$ respectively. The independence of $W'$ and $X'$ implies 
\eq
	\tau_N^0\big[\mathcal{T}^\pi(W,X)\big]\Big]  
	& = & \frac 1 N  \sum_{ \substack{ \phi:\mcal V^\pi\to [N] \\ \mrm{injective}}} \Bigg( \prod_{ e \in \mcal E^\pi_w}  \Gamma_w\big(\phi(e) \big)  \times  \prod_{ e \in \mcal E^\pi_x}     \Gamma_x\big(\phi(e) \big)\nonumber \\
	& & \times  \esp \bigg[ \prod_{ e \in \mcal E^\pi_w}  W'\big(\phi(e) \big)  \bigg] \times   \esp \bigg[ \prod_{ e \in \mcal E^\pi_x}  X'\big(\phi(e) \big) \bigg]\Bigg).
\qe
Since the matrices $W'$ and $X'$ have i.i.d. entries, the values of the expectation does not change if we change the split partition $\pi$ into another arbitrary split partition $\Phi$. This is still true if $\Phi$ is uniformly distributed at random independently of the matrices. Let us define the test graph $\mathcal{T}_w = (\mathcal{V}_w,\mathcal{E}_w)$ in one variable $w$ obtained from $\mcal T$ by removing the edges labeled $x$ and the vertices of $  V_2$, and let $\mcal T_x$ be defined similarly. Hence we have
\eqa\label{DeltaTrick}
	\lefteqn{\tau_N^0\big[\mathcal{T}^\pi(W,X)\big]\Big]   }  \\
	& = &\frac 1 N  \sum_{ \substack{ \phi:\mcal V^\pi\to [N] \\ \mrm{injective}}} \Bigg( \prod_{ e \in \mcal E_w}  \Gamma_w\big(\phi(e) \big)     \prod_{ e \in \mcal E_x}     \Gamma_x\big(\phi(e) \big) \Bigg)   \delta^0[\mcal T^\pi_w(W')]  \delta^0[\mcal T^\pi_x(X')] \nonumber \\
	& = &  \frac 1 N  (N_0)_{V_0}(N_1)_{V_1}(N_2)_{V_2}  \delta^0\big[ \mcal T^\pi(\Gamma_w, \Gamma_x) \big]    	 \delta^0[\mcal T^\pi_w(W')]  \delta^0[\mcal T^\pi_x(X')],\nonumber 
\qea
Note that since the entries of $\Gamma_w$ and $ \Gamma_x$ are bounded the term $\delta^0\big[ \mcal T^\pi(\Gamma_w, \Gamma_x) \big] $ is bounded. Moreover, $\delta^0[\mcal T^\pi_w(W')]  \delta^0[\mcal T^\pi_x(X')]=\delta^0[\mcal T^\pi(W',X')] $ is independent of $N$. Finally, recalling that $\mcal V^\pi = V_0^\pi \sqcup V_1^\pi \sqcup V_2^\pi$ since $\pi$ is split, we have 
	$$(N_0)_{V_0}(N_1)_{V_1}(N_2)_{V_2} = N^{|\mathcal{V}^\pi|} \psi_0^{|V_0^\pi|} \psi_1^{|V_1^\pi|}\psi_2^{|V_2^\pi|} \big( 1 + o(1) \big).$$
 Therefore, by \eqref{Eq:Substitution} and \eqref{DeltaTrick}, setting 
\eqa
	\eta (\pi) &=& -1 +  |\mathcal{V}^\pi| -\frac{|E|}{2}-\sum\limits_{e\in E}  \frac{\mbf n(e)}{2},\nonumber\\
	\Psi^\pi & =&  \psi_0^{|V_0^\pi| -\sum\limits_{e\in E} \frac{\mbf n(e)-1}{2}}  \psi_2^{|V_2^\pi|}  \psi_1^{|V_1^\pi|}, \quad \psi_i = \frac{N_i}{ {N}},\label{DefPsi}
\qea
  the following expression is valid for any reference graph $T$
	\eqa\label{TauToC}
	 \lefteqn{\tau_N \big[ T( Y) \big]  }\\
		& =&    \sum_{ \pi \in \mathcal{P}(\mathcal{V}) }  N^{\eta(\pi)} \Psi^\pi\ \delta^0\big[\mcal T^\pi(\Gamma_w, \Gamma_x)\big]\delta^0[\mcal T^\pi_w(W')]  \delta^0[\mcal T^\pi_x(X')]\big( 1 + o(1) \big) \nonumber
\qea

\subsection{Convergence and support of the limit}\label{Sec:ConvEtSupp}

In a first subsection, we prove that for a reference graph $T$ as in the previous section and for any $\pi\in \mcal P(\mcal V)$ such that $\delta^0[\mcal T^\pi_w(W')] \times  \delta^0[\mcal T^\pi_x(X')] = \delta^0[\mcal T^\pi(W',X')]\neq 0$, we always have $ \eta (\pi) \leq 0$. The validity of this claim implies the convergence in traffic distribution of $\mbf Y$ since therefore in the r.h.s. of Formula \eqref{TauToC} all terms are bounded.  In a second subsection, we state an intermediate result that allows us to identify the partitions such that $\delta^0[\mcal T^\pi(W',X')]\neq 0$ and $ \eta (\pi) =0$.

\subsubsection{Proof of the convergence of the traffic distribution of $\mbf Y$}

In this subsection, we fix a split reference graph $T$. Let $\pi \in \mcal P(\mcal V)$ be a split partition and denote by $\rho(\pi)$ the restriction of $\pi$ to the vertex set of $T$. In the following expression 
	\eqa\label{TransfEta1}
		\eta(\pi) = |\mathcal{V}^\pi|-1-\frac{|E|}{2}-\sum_{e\in E} \frac{\mbf n(e)}{2},
	\qea
 only $|\mcal V^\pi|$ depends on the partition $\pi$, so $\eta(\pi)$ is large when the number of vertices of $\mcal T^{\pi}$ is large. On the other hand, by independence of the matrices and their entries, we must have edges identifications in order to have $\delta^0[\mcal T^\pi(W',X')]\neq 0$. We shall prove that the competition between these two constraints results in terms of the right order.

The strategy consists in translating the condition $\eta(\pi)$ into concrete conditions on two other intermediary graphs. First recall that $\mathcal{T}_w$ is the graph labeled in $w$ obtained from $\mcal T$ by removing the edges labeled $x$ and the vertices of $  V_2$. It may be a disconnected graph, so we denote by $c_\pi\geq 1$ its number of connected components. Moreover, the number of edges labeled $w$ in $\mcal T^\pi$ is $|\mathcal{E}_w^\pi| = |\mathcal{E}_w| = \sum_{e\in E} \mbf n(e)$. Finally, since the partition $\pi$ is split, the number of vertices of this graph is $|\mathcal{V}_w^\pi| = |  V_1^\pi|+|  V_0^\pi|$. Hence we have the identity
	 \eqa\label{TransfEta2}
	 	\eta_1(\pi) := |\mathcal{V}_w^\pi|-c_\pi-\frac{| {\mathcal{E}}^\pi_w|}2 = |  V_1^\pi|+|  V_0^\pi| - c_\pi - \sum_{e\in E}  \frac{\mbf n(e)}2.
	\qea
We put the emphasis on this formula since $\eta_1$ replaces $\eta$ the computation of the traffic distribution of simpler models, such as independent large Wigner matrices in \cite[Chapter 3]{Male2020}. We recall the following definitions.

\begin{definition}
Let $\mathfrak G = (\mathfrak V, \mathfrak E)$ be a graph. 
\begin{enumerate}
	\item The graph $\mathfrak G$ is a forest whenever the removal of any edge of $\mathfrak G$ always increases its number of connected components.
	\item The \emph{skeleton graph} $\bar{\mathfrak G} = (\bar{\mathfrak V},\bar{\mathfrak E})$ of $\mathfrak G$ is obtained by identifying the edges of $\mathfrak G$ with the same endpoints, hence forgetting the multiplicity of the edges.
\end{enumerate}
\end{definition}

We also recall the following result (for a proof, see \cite[Lemma 2.13]{Male2020}). 
 
  \begin{lemma}\label{eq_graph}
  For any graph $\mathfrak G= (\mathfrak V,\mathfrak E)$ with $c$ connected components, 
  	$$  |\mathfrak V|-c-|\mathfrak E| \leq 0,$$
  with equality if and only if the graph is a forest.
  \end{lemma}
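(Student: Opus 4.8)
The plan is to prove this classical relation by induction on the number of edges $|\mathfrak E|$, which will give both the inequality and the equality characterization at once, and which meshes directly with the bridge-based definition of forest adopted here (every edge, when removed, increases the number of connected components). The one preliminary observation I would record is that deleting a single edge $e=(u,v)$ from a graph changes the component count by $0$ or by exactly $+1$: only the component containing $u$ and $v$ can be affected, and after deleting $e$ every other vertex of that component still reaches $u$ or $v$ along a path not using $e$, so that component either stays connected or splits into precisely two pieces.

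For the base case $|\mathfrak E|=0$ one has $c=|\mathfrak V|$, so $|\mathfrak V|-c-|\mathfrak E|=0$, and the edgeless graph is a forest (its defining condition is vacuous), so equality holds. For the inductive step I would fix an edge $e$, set $\mathfrak G'=(\mathfrak V,\mathfrak E\setminus\{e\})$ with $c'$ connected components, and split into the two cases allowed by the preliminary observation. If $e$ is a bridge ($c'=c+1$) then $|\mathfrak V|-c-|\mathfrak E|=|\mathfrak V|-c'-|\mathfrak E'|\leq 0$ by the inductive hypothesis; if $e$ is not a bridge ($c'=c$) then $|\mathfrak V|-c-|\mathfrak E|=(|\mathfrak V|-c'-|\mathfrak E'|)-1\leq -1<0$. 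This settles the inequality. For the equality clause: if $\mathfrak G$ is a forest then the chosen $e$ is a bridge, and $\mathfrak G'$ is again a forest (if some $f\neq e$ failed to be a bridge of $\mathfrak G'$, a path joining its endpoints in $\mathfrak G'-f$ would also join them in $\mathfrak G-f$, making $f$ a non-bridge of $\mathfrak G$ — contradiction), so the inductive equality propagates to $\mathfrak G$; conversely, a non-forest has a non-bridge edge, and the second case makes the quantity strictly negative. Hence equality holds exactly for forests.

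I do not expect a genuine obstacle here — this is a textbook identity, and the excerpt already cites \cite{Male2020} for it; the only points needing a little care are the ``at most one new component'' observation and keeping the forest/bridge bookkeeping consistent (one could equally phrase everything through acyclicity, the two notions being equivalent). An alternative route, if one prefers to avoid the bridge manipulations, is to reduce to the connected case via the block decomposition $|\mathfrak V|-c-|\mathfrak E|=\sum_i\big(|\mathfrak V_i|-1-|\mathfrak E_i|\big)$ over connected components, and then invoke that a connected graph has a spanning tree with exactly $|\mathfrak V|-1$ edges, with equality precisely when no further edge is present, i.e.\ when the component is a tree.
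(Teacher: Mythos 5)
Your proof is correct. Note that the paper does not prove this lemma at all --- it simply cites \cite[Lemma 2.13]{Male2020} --- so there is no in-paper argument to compare against; your induction on $|\mathfrak E|$ (edge removal changes the component count by $0$ or $1$; bridges preserve the quantity $|\mathfrak V|-c-|\mathfrak E|$, non-bridges strictly decrease it) is the standard proof and correctly handles the equality case under the paper's cut-edge definition of a forest, including multigraphs with loops and parallel edges, where such edges are automatically non-bridges and force strict inequality. The alternative reduction to connected components plus spanning trees that you sketch is equally valid; either version would serve as a self-contained replacement for the citation.
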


Therefore we re-rewrite \eqref{TransfEta2} as 
	$$\eta_1(\pi) =  \Big( |\mathcal{V}_w^\pi|-c_\pi - | \bar{{\mathcal{E}}}^\pi_w| \Big) + \Big( | \bar{{\mathcal{E}}}^\pi_w| -\frac{| {\mathcal{E}}^\pi_w|}2 \Big),$$
where ${\mathcal{E}}^\pi_w$ is the set of edges of the skeleton graph of $\mcal T^\pi_w$. The first term in the r.h.s. is non-negative by Lemma \ref{eq_graph}. On the other hand, for any $i\geq 1$ denote by $\mathcal{E}^\pi_{w, i} \subset \mathcal{E}^\pi_w$ the set of edges of multiplicity equal to $i$ in $\mathcal{T}_x^\pi$. Then we have
	\eqa\label{Eq:SkelCard}
		\Big(|\bar{\mathcal{E}}^\pi_w|-\frac{|\mathcal{E}_w^\pi|}{2}\Big) = \frac {|\mathcal{E}^\pi_{w, 1}|}2 - \sum_{i\geq 3} \frac{i-2}2  |\mathcal{E}^\pi_{w, i}|.
	\qea
If $ \delta^0 \big[ \mcal T_w^\pi(W')\big]=0$, then $T^\pi_w$ has no edge of multiplicity 1 so
$(|\bar{\mathcal{E}}^\pi_w|-\frac{|\mathcal{E}^\pi|}{2}) \leq 0$ with equality if and only if the edges labeled $w$ are of multiplicity 2 in $ \mcal T_w^\pi$. 

As we cannot find a way to apply the same reasoning $\eta$ defined in \eqref{TransfEta1}, we introduce a second graph.  We consider $T^{\tilde \rho(\pi)}$ the quotient of $T$ by the split partition $\tilde \rho(\pi)$  such that
\begin{itemize}
\item $\forall i,i' \in V_1$,   $i\sim_{\tilde \rho(\pi)} i' $ whenever $i$ and $i'$ belong to the same connected component of the quotient of $\mathcal{T}_w$ by the restriction of $\pi$,
\item $\rho(\pi)$ and $\tilde \rho(\pi)$ coincide on $V_2$ ($\forall j,j' \in V_2$, $j\sim_{\tilde \rho(\pi)} j' \Leftrightarrow j \sim_\pi j'$).

\end{itemize}

Note that the partition $\rho(\pi)$, which is induced by $\pi$ by restriction, is finer than $\tilde \rho(\pi)$ in the sense that each block of $\rho(\pi)$ belong to a block of $\tilde \rho(\pi)$. Moreover $T^{\tilde \rho(\pi)}$ is a connected graph since $T$ is connected. Its number of edges is the same as for $T$, namely $|E^{\tilde \rho(\pi)}|=|E|$. It number of vertices is $|  V^{\tilde \rho(\pi)}| = c_\pi + |  V_2^\pi|$. Therefore we have
	\eqa\label{TransfEta3}
 		\eta_2(\pi)  &:= & |\mcal V^{\tilde \rho(\pi)}|-1-\frac{|E^{\tilde \rho(\pi)}|}2 = c_\pi + |  V_2^\pi|-1 -   \frac{|E|}2,
	\qea
and Formulae \eqref{TransfEta1}, \eqref{TransfEta2}  imply that $\eta(\pi) = \eta_1(\pi) + \eta_2(\pi)$. With   $|\bar{E}^{\tilde \rho(\pi)}|$ standing for the number of edges of the skeleton of $T^{\tilde \rho(\pi)}$, we write as before
 \eqa
 \eta_2 (\pi) 
	& =&  \Big(|\bar{\mathcal{E}}^\pi_w|-\frac{|\mathcal{E}_w^\pi|}{2}\Big)+\Big(|\bar{ {E}}^{\tilde \rho(\pi)}|-\frac{| {E}^{\tilde \rho(\pi)}|}{2}\Big),\label{Eq:4termExp}
 \qea
The first term is non-negative when $\delta^0[\mcal T^\pi(W',X')]\neq 0$. By a computation similar to \eqref{Eq:SkelCard}, we see that last term $|\bar{E}^{\tilde \rho(\pi)}|-\frac{|E^{\tilde \rho(\pi)}|}{2}$ is bounded by half the number of edges of multiplicity one in $T^{\tilde \rho(\pi)}$, which may be positive. We must therefore show that when this quantity is positive, another quantity compensates it. The following definition clarifies phrasing that we use through the proof.

\begin{definition}\label{def:Extra_Niche}
Let $\pi$ be a partition of  $\mathcal{V}$. We say that a group of edges of $\mcal T^{\pi}$ are \emph{identified} by $\pi$ if their target vertices belong to a same block of $\pi$, as long as their sources. An \emph{extra-niche} (respectively \emph{intra-niche}) \emph{identification} is an identification of internal vertices or edges of $\mcal T^{\pi}$ from different niches (respectively the same niche). Two edges $e$ and $e'$ of $T$ are \emph{niche neighbors} ($w$-niche neighbors or $x$-niche neighbors)  whenever they have in their niche edges forming an extra-niche identification (and these edges are labeled $w$ or $x$ respectively).
\end{definition}

The following fact is used several times.

\begin{lemma}\label{Lem:ExtraXbound}Assume that $T$ and $\pi \in \mcal P(\mcal V)$ are split and $\delta^0[\mcal T^\pi(W',X')]\neq 0$. If two edges $e$ and $e'$ of $T$ are $x$-neighbors, then they form a group of edges of multiplicity at least 2 in $T^{\tilde \rho(\pi)}$.
\end{lemma}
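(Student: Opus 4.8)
The plan is to chase the two endpoints of $e$ and $e'$ through the partition $\tilde \rho(\pi)$, using only the combinatorics of niches. Recall that every edge of the reference graph $T$ runs from a vertex of $V_2$ to a vertex of $V_1$, and that $\tilde \rho(\pi)$ is built by two separate rules: on $V_2$ it is just the restriction $\rho(\pi)$ of $\pi$, while on $V_1$ it identifies two vertices exactly when they lie in the same connected component of the quotient $\mcal T_w/\pi$. So to conclude it is enough to show that the sources of $e$ and $e'$ land in a common block of $\rho(\pi)$ and that their targets land in a common connected component of $\mcal T_w/\pi$: then $e$ and $e'$ project to edges of $T^{\tilde \rho(\pi)}$ with the same source block and the same target block, and since an extra-niche identification involves two different niches these are two distinct edges of $T$, so they form a group of edges of multiplicity at least $2$ in $T^{\tilde \rho(\pi)}$.

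To carry this out, I would start from the definition: $e$ and $e'$ being $x$-neighbors means there is an $x$-labeled edge $f$ in the niche of $e$ and an $x$-labeled edge $f'$ in the niche of $e'$ that are identified by $\pi$, i.e. their sources lie in a common block of $\pi$ and their targets lie in a common block of $\pi$. For the source side, the construction of a niche gives that the source of $f$ is the source of $e$ in $T$ and the source of $f'$ is the source of $e'$ in $T$, both of which are vertices of $V_2$; being identified by $\pi$, they lie in a common block of $\rho(\pi)$, hence of $\tilde \rho(\pi)$. For the target side, the target of $f$ is an internal vertex $v$ of the niche of $e$ and the target of $f'$ is an internal vertex $v'$ of the niche of $e'$; these are vertices of $\mcal T_w$ (internal vertices survive the passage to $\mcal T_w$), and $v\sim_\pi v'$. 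In $\mcal T_w$ the vertex $v$ is joined, through its companion $w$-edge, to the target of $e$ in $T$, and likewise $v'$ is joined to the target of $e'$. Passing to $\mcal T_w/\pi$ merges $v$ and $v'$ into one vertex, so the images of the targets of $e$ and $e'$ become joined through it and hence lie in the same connected component of $\mcal T_w/\pi$; by the defining rule of $\tilde \rho(\pi)$ on $V_1$, the targets of $e$ and $e'$ lie in a common block of $\tilde \rho(\pi)$. This establishes both required identifications and finishes the argument.

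I do not expect any real obstacle here: the proof is a pure definition chase, and the hypothesis $\delta^0[\mcal T^\pi(W',X')]\neq 0$ (the standing assumption of this section) does not seem to be needed for it. The only thing that needs care is the bookkeeping of which part $V_0$, $V_1$ or $V_2$ each vertex lies in — sources of niche $x$-edges in $V_2$, internal vertices in $V_0$, targets of niche $w$-edges in $V_1$ — so that the two clauses defining $\tilde \rho(\pi)$ are applied on the right parts, together with the elementary fact that all edges of $T$ carry the same orientation $V_2\to V_1$, so that matching source and target blocks really do produce parallel edges in $T^{\tilde \rho(\pi)}$.
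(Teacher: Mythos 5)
Your argument is correct and follows essentially the same route as the paper's proof: the identified $x$-edges force the sources of $e$ and $e'$ (in $V_2$) into a common block of $\pi$, hence of $\tilde\rho(\pi)$, while the identified internal target vertices, together with their companion $w$-edges, put the targets of $e$ and $e'$ in the same connected component of the quotient of $\mathcal{T}_w$, hence in a common block of $\tilde\rho(\pi)$. Your write-up is in fact slightly more careful than the paper's (which only spells out the target side and is a bit loose about quotienting $\mathcal{T}_w$ rather than $\mathcal{T}$), and your observation that the hypothesis $\delta^0[\mathcal{T}^\pi(W',X')]\neq 0$ is not needed for this purely combinatorial step is accurate.
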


\begin{proof} Denote by $e_x$ and $e'_x$ two edges of $e$ and $e'$ respectively forming an extra-niche identification. The targets of $e_x$ and $e'_x$ coincide in $\mcal T^\pi$, so the targets of $e$ and $e'$ belong to the the same connected component in $\mcal T^\pi$. Hence by definition of $\tilde \rho$, $e$ and $e'$ form a group of multiplicity at least 2 in $T^{\tilde \rho(\pi)}$. \end{proof}

\begin{lemma}\label{simpedge} Assume that $T$ and $\pi \in \mcal P(\mcal V)$ are split and $\delta^0[\mcal T^\pi(W',X')]\neq 0$. 
Let $e\in  E$ be a simple edge in $T^{\tilde \rho(\pi)}$. Then $e$ has no $x$-neighbor, and  the internal vertex of its niche form intra-niche identifications.
\end{lemma}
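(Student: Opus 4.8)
The plan is to argue by contradiction on each of the two assertions, exploiting the decomposition $\eta(\pi) = \eta_1(\pi) + \eta_2(\pi)$ and the fact that, since $\delta^0[\mcal T^\pi(W',X')]\neq 0$ and $\pi$ is split, the convergence proof of the previous subsection has already shown $\eta(\pi)\le 0$ together with $\eta_1(\pi)\ge 0$ and $\bigl(|\bar{E}^{\tilde\rho(\pi)}| - \tfrac12|E^{\tilde\rho(\pi)}|\bigr)$ being the dominant (possibly positive) contribution to $\eta_2(\pi)$. The engine of the argument is the balance \eqref{Eq:4termExp}: if $e$ is a simple edge in $T^{\tilde\rho(\pi)}$, it contributes a positive $\tfrac12$ to $|\bar{E}^{\tilde\rho(\pi)}|-\tfrac12|E^{\tilde\rho(\pi)}|$, and this surplus must be absorbed either by the forest defect $|\bar{\mcal E}^\pi_w| - c_\pi$-type term inside $\eta_1$ or by edges of multiplicity $\ge 3$ elsewhere; the point is that the niche structure of $e$ forces such a compensating defect unless the internal vertices of $e$'s niche behave as stated.

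First I would prove that $e$ has no $x$-neighbor. Suppose it did, say $e$ is an $x$-neighbor of some $e'\in E$. By Lemma \ref{Lem:ExtraXbound}, $e$ and $e'$ then form a group of edges of multiplicity at least $2$ in $T^{\tilde\rho(\pi)}$, contradicting the hypothesis that $e$ is simple in $T^{\tilde\rho(\pi)}$ (a simple edge has multiplicity exactly one there). So the first assertion is essentially immediate from the preceding lemma; the only thing to check is that ``simple edge'' in $T^{\tilde\rho(\pi)}$ does mean multiplicity one in the skeleton sense, which is how the surrounding text uses the word.

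The substantive part is the second assertion: every internal vertex of the niche of $e$ is involved only in intra-niche identifications. Here I would argue by contradiction: suppose some internal vertex $v$ of the niche of $e$ forms an extra-niche identification with a vertex of another niche (or is identified to a reference vertex outside $e$'s niche). Recall $v$ is the source of a $w$-edge whose target is the target of $e$, and the target of an $x$-edge whose source is the source of $e$. The $x$-edge companion of $v$ cannot be extra-niche identified — that would make $e$ an $x$-neighbor of another edge, contradicting the first part just proved — so the extra-niche identification at $v$ must involve the $w$-edge companion of $v$, i.e. $e$ acquires a $w$-niche neighbor through $v$. I would then show that such a $w$-extra-niche identification either (i) creates a multiplicity-$\ge 3$ edge or an extra non-forest cycle in $\mcal T^\pi_w$, strictly decreasing the $\eta_1$ contribution below its maximal value $0$, while the simple edge $e$ keeps $\eta_2$'s skeleton term at $+\tfrac12$, so the total $\eta(\pi)$ would be forced $\le -\tfrac12<0$ — but since we are in the regime $\delta^0[\mcal T^\pi(W',X')]\ne 0$ we only *know* $\eta(\pi)\le 0$, so this alone is not yet a contradiction. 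The real contradiction comes from pairing this with the constraint that $W'$ and $X'$ have no odd moments beyond what is allowed: if $e$ is simple in $T^{\tilde\rho(\pi)}$ then, chasing through the definition of $\tilde\rho$, the connected component of the target of $e$ in $\mcal T^\pi_w$ is not merged with any other, which forces the $\mbf n(e)$ $w$-edges of the niche of $e$ to be matched among themselves in $\mcal T^\pi_w$ (else $\delta^0[\mcal T^\pi_w(W')]=0$); this internal matching of the $w$-edges, together with the already-established absence of $x$-neighbors, leaves the $x$-edges of the niche also to be matched among themselves, and tracing which vertices these matchings identify shows precisely that the internal vertices pair up within the niche, i.e. the identifications are intra-niche.

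The step I expect to be the main obstacle is this last bookkeeping: making rigorous the claim that ``$e$ simple in $T^{\tilde\rho(\pi)}$'' $\Rightarrow$ ``the $w$-edges of $e$'s niche are matched within the niche in $\mcal T^\pi_w$''. One must rule out the scenario where a $w$-edge of $e$'s niche is matched (multiplicity $2$) with a $w$-edge of \emph{another} niche without that merging the target-components — this is exactly where the split property of $\pi$ and the definition of $\tilde\rho$ on $V_1$ via connected components of $\mcal T_w/\pi$ must be used carefully, since identifying two $w$-edges forces their sources and targets into common blocks, hence in particular merges the two relevant connected components of $\mcal T^\pi_w$, hence merges the corresponding blocks of $\tilde\rho(\pi)$ on $V_1$ and raises the multiplicity of $e$ in $T^{\tilde\rho(\pi)}$ above one — contradiction. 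Once that is nailed down, the conclusion that the internal vertices only form intra-niche identifications follows by matching up companions.
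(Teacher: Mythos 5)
Your first assertion is handled exactly as in the paper and is fine: a simple edge of $T^{\tilde\rho(\pi)}$ has multiplicity one there, and an $x$-neighbor would force multiplicity at least two by Lemma \ref{Lem:ExtraXbound}. The problem is the second half. The lemma asserts only the \emph{positive} statement that each internal vertex of the niche of $e$ is identified with another internal vertex of the \emph{same} niche; it does not assert the absence of extra-niche identifications. You instead argue by contradiction that no internal vertex of the niche is identified outside the niche. That stronger statement is not provable under the present hypotheses ($\pi$ split and $\delta^0[\mcal T^\pi(W',X')]\neq 0$ only): it is precisely Lemma \ref{SimpleIsolated}, which needs the extra hypothesis $\eta(\pi)=0$ and the separation argument. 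With only $\delta^0\neq 0$ one may identify an internal vertex of $e$'s niche with an internal vertex of another niche without creating or destroying any edge identification (it merely costs a vertex, i.e.\ makes $\eta(\pi)$ more negative), so neither $\delta^0$ nor the bound $\eta(\pi)\le 0$ can yield the contradiction you are after.

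Moreover, the bookkeeping step you yourself single out as the main obstacle is where the argument actually fails. Identifying a $w$-edge of $e$'s niche with a $w$-edge of the niche of another edge $e'$ identifies the two internal vertices and the two \emph{targets} of $e$ and $e'$ in $V_1$, hence merges the relevant components of $\mcal T_w^\pi$ and the corresponding blocks of $\tilde\rho(\pi)$ on $V_1$; but for $e$ to gain multiplicity in $T^{\tilde\rho(\pi)}$ you would also need the \emph{sources} of $e$ and $e'$ (in $V_2$, where $\tilde\rho(\pi)$ coincides with $\rho(\pi)$) to be identified, and a $w$-identification never touches them. This asymmetry is exactly why Lemma \ref{Lem:ExtraXbound} is stated for $x$-neighbors only; at this stage a simple edge can perfectly well have a $w$-neighbor (that is only excluded later, under $\eta(\pi)=0$, via Lemmas \ref{MultiplicityInW} and \ref{SimpleIsolated}). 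Your appeal to ``no odd moments beyond what is allowed'' is also off the mark: only the centering and independence of the entries is used, and third moments may be nonzero (they produce the $m_w^{(3)}m_x^{(3)}$ factors in the limit). The intended proof of the second assertion is much shorter and needs none of this: since the entries of $X'$ are centered and independent, $\delta^0[\mcal T^\pi(W',X')]\neq 0$ forces every $x$-labelled edge of the niche of $e$ to be identified with at least one other $x$-labelled edge; by the first assertion that partner lies in the same niche; and identifying two $x$-edges of the niche identifies their targets, i.e.\ two internal vertices of the niche. (Note also that the internal vertices need not ``pair up'': groups of size at least three are allowed under the present hypotheses.)
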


\begin{proof}
A edge $e\in E$ that is simple in $T^{\tilde \rho(\pi)}$ has no $x$-neighbor since otherwise it would contradict Lemma \ref{Lem:ExtraXbound}.  Moreover, by the centering and the independent of the matrices $W', X'$ and their entries, the condition $\delta^0[\mcal T^\pi(W',X')] \neq 0$ implies that the edges labeled $x$ in the niche of $e$ must be identified somewhere, so necessary they form intra-niche identifications. 
\end{proof}

We can now prove that $\eta(\pi)\leq 0$. Let us denote by $l^\pi$ the number of simple edges of $T^{\tilde \rho(\pi)}$ and by ${l'}^\pi$ the number of edge of multiplicity at least 3 in $\mathcal{T}_w^\pi$. Since $\mcal T^\pi_w$ has no simple edge when $\delta^0[\mcal T^\pi(W',X')]\neq 0$, then \eqref{Eq:SkelCard}  shows that
$|\bar{\mathcal{E}}_w^\pi| -  \frac{|\mathcal{E}_w^\pi|}{2}  \leq   -\frac{{l'}^\pi}{2}$. On the other hand, since each niche contains an odd number of internal vertices, Lemma \ref{simpedge} implies that the niches corresponding to simple edges of $T^{\tilde \rho(\pi)}$ must have an edge of multiplicity at least $3$, and so we have $
l^\pi\leq {l'}^\pi$. Moreover, from the definitions of $l^\pi$ we get 
 $| \bar{E}^{\tilde \rho(\pi)}| - \frac{|E^{\tilde \rho(\pi)}|}{2} \leq  \frac{l^\pi}{2}$. 
Formula \eqref{Eq:4termExp} and the above arguments imply that $\eta (\pi) \leq    \frac{l^\pi-{l'^\pi}}{2} \leq  0$ whenever $\delta^0[\mcal T^\pi(W',X')] \neq 0$ and so $\eta (\pi) \leq 0$. This proves the convergence $ C_N(\mcal T^\pi) $, and so the convergence in traffic distribution of $\mbf Y$ as explain in the presentation of Section \ref{Sec:ConvEtSupp}.

\subsubsection{Support of the traffic distribution}\label{Sub:Support}

The condition $\eta(\pi)=0$ is hence equivalent to the following four conditions for the intermediary graphs $\mcal T_w^\pi$ and $T^{\tilde \rho(\pi)}$ introduced before:
\eqa\label{TreeCal}
-1+c_\pi+|V_2^\pi|-|\bar{E}^{\tilde \rho(\pi)}| &=& 0, \\
 \label{DbCal}
 | \bar{E}^{\tilde \rho(\pi)}| - \frac{|E^{\tilde \rho(\pi)}|}{2} & = & \frac{l^\pi}{2},\\
 \label{Db}
 |\bar{\mathcal{E}}_w^\pi| -  \frac{|\mathcal{E}_w^\pi|}{2} & = & -\frac{l^\pi}{2}\\
\label{Forest}
|\mathcal{V}_w^\pi|-c_\pi-|\bar{\mathcal{E}}^\pi_w| & = & 0,
\qea
with $l^\pi$ the number of simple edges of $T^{\tilde \rho(\pi)}$. 

\begin{definition}\label{DefCactus} \begin{enumerate} 
\item A \emph{simple cycle} in a graph is a sequence of pairwise-distinct vertices $v_1\etc v_k$, such that $v_i$ and $v_{i+1}$ are adjacent, with indices modulo $k$ (there is no restriction on the directions of the edges).
\item A \emph{cut edge} of a graph is an edge whose removal increases the number of connected components. The set of cut edges of a graph $G$ is denoted $\mcal C_1(G)$.
\item A \emph{cactus} (respectively a \emph{pseudo-cactus}) is a graph such that each edge belongs to exactly (respectively at most) one simple cycle. 
\item A \emph{strong component} of a pseudo-cactus $G$ is whether a simple cycle or a cut edge of $G$, and we denote by $\mcal {SC}(G)$ the set of strong components of $G$, see an example Figure \ref{F03_PCactus}. We call \emph{cut vertex} a vertex of a graph that belong to several strong components.
\end{enumerate}
\end{definition}

  \begin{figure}[h]
    \begin{center}
     \includegraphics[scale=.75]{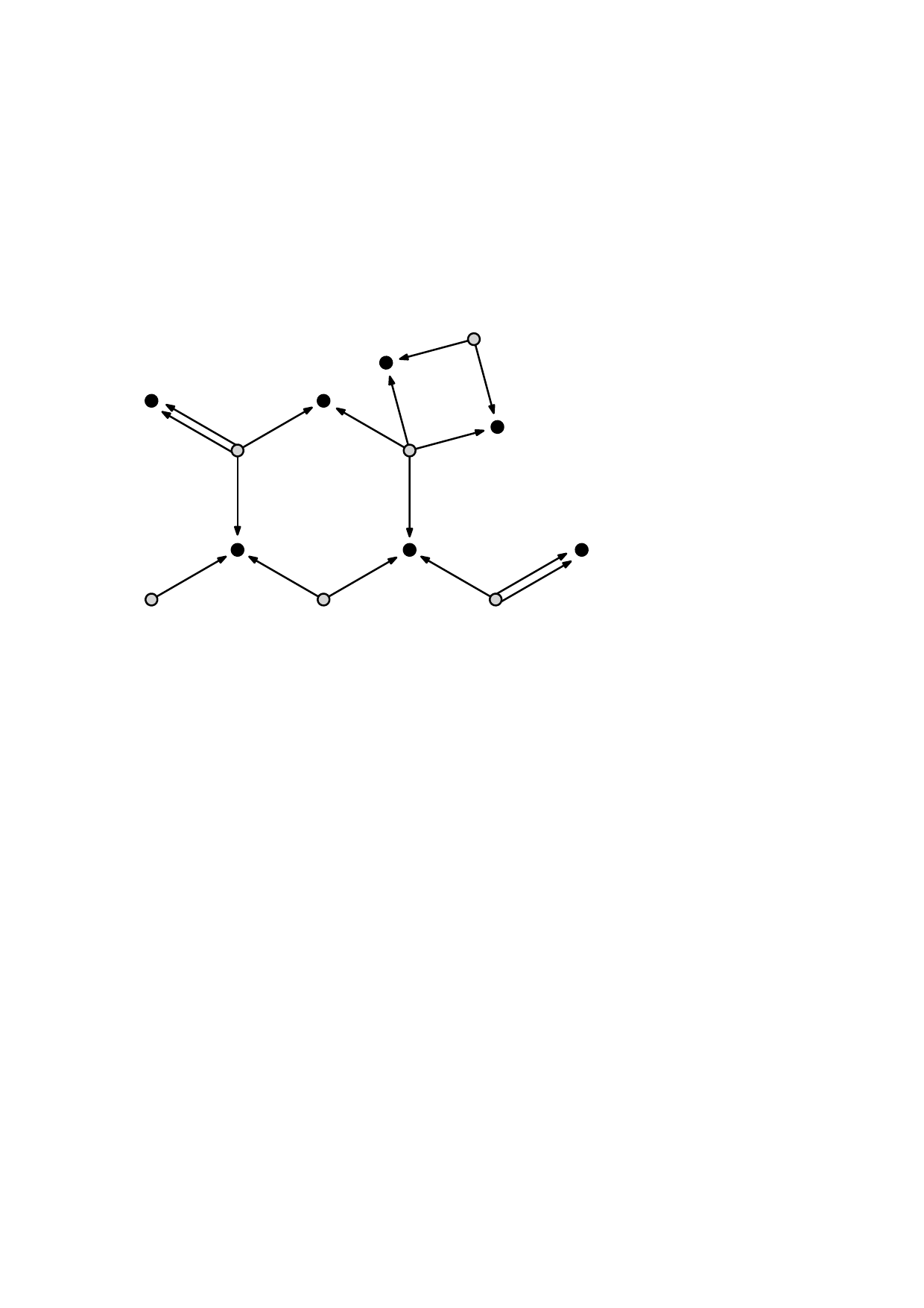}
    \end{center}
    \caption{A pseudo-cactus with two cut edges, two length 2 simple cycles, one length 4 and one length 6 cycles.}
    \label{F03_PCactus}
  \end{figure}

\begin{proposition}\label{Prop:Support} In the above setting, let $\pi \in \mcal P(\mcal V)$ such that $\eta(\pi)=0$ and $\delta^0[\mcal T^\pi(W',X')] \neq 0$. Then necessarily $T^{\rho(\pi)}$ is a pseudo-cactus, and the partition $\pi$ do no identify edges or internal vertices of $\mcal T^\pi$  from different strong components of $T^{\rho(\pi)}$.
\end{proposition}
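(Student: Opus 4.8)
The plan is to combine the equality conditions \eqref{TreeCal}--\eqref{Forest} and interpret each one structurally. First I would unpack what \eqref{Forest} and the $w$-multiplicity analysis from \eqref{Eq:SkelCard} tell us: $\eta_1(\pi)=0$ forces $\mcal T_w^\pi$ to be a forest (equality in Lemma \ref{eq_graph}) with all $w$-edges of multiplicity exactly $2$ except for those counted by $l'^\pi$, and moreover the chain of inequalities $l^\pi \leq l'^\pi$ that gave $\eta(\pi)\leq 0$ must be an equality. So every niche of a simple edge of $T^{\tilde\rho(\pi)}$ contributes \emph{exactly one} $w$-edge of multiplicity $\geq 3$, and in fact (since the internal vertices are odd in number and Lemma \ref{simpedge} says they pair off intra-niche) exactly one triple $w$-edge, the rest being doubles.

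Next I would translate \eqref{TreeCal} and \eqref{DbCal} into statements about $T^{\tilde\rho(\pi)}$ and then about $T^{\rho(\pi)}$. Equation \eqref{TreeCal} says $|\bar E^{\tilde\rho(\pi)}| = c_\pi + |V_2^\pi| - 1 = |\mcal V^{\tilde\rho(\pi)}| - 1$, i.e.\ the \emph{skeleton} of $T^{\tilde\rho(\pi)}$ is a tree (connected, which it is since $T$ is, with exactly $|\mrm{vertices}|-1$ skeleton edges). Equation \eqref{DbCal} then says that in $T^{\tilde\rho(\pi)}$ every edge has multiplicity $2$ except for $l^\pi$ simple edges. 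A connected multigraph whose skeleton is a tree and all of whose non-simple edges are exactly double is precisely a pseudo-cactus: each double skeleton edge is a length-$2$ simple cycle, each simple skeleton edge is a cut edge, and since the skeleton is a tree no edge lies in two simple cycles. Then I would pass from $T^{\tilde\rho(\pi)}$ down to $T^{\rho(\pi)}$: since $\rho(\pi)$ refines $\tilde\rho(\pi)$ only by possibly splitting vertices of $V_1$, $T^{\rho(\pi)}$ is obtained from the pseudo-cactus $T^{\tilde\rho(\pi)}$ by separating some vertices, which keeps it a pseudo-cactus (one must check multiplicities don't increase and no new cycles appear — splitting a vertex can only break cycles, never create them, and can only reduce the multiplicity of a parallel class). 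This gives the first assertion.

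For the second assertion — that $\pi$ does not identify edges or internal vertices of $\mcal T^\pi$ coming from distinct strong components of $T^{\rho(\pi)}$ — I would argue by the ``saturation'' of all the equalities. Suppose such a cross-strong-component identification occurred. If it is an extra-niche $x$-identification between niches of edges $e, e'$ of $T$ lying in different strong components, Lemma \ref{Lem:ExtraXbound} forces $e$ and $e'$ into a common multiplicity-$\geq 2$ group in $T^{\tilde\rho(\pi)}$, hence a genuine parallel pair, so $e$ and $e'$ are in the same length-$2$ cycle of $T^{\tilde\rho(\pi)}$ — a single strong component, contradicting that they are separated in $T^{\rho(\pi)}$ (splitting $V_1$-vertices cannot put two already-parallel edges into different strong components). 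If it is an extra-niche $w$-identification, it would merge connected components of $\mcal T_w^\pi$ or create a cycle there, violating the forest condition \eqref{Forest} (equivalently it would over-identify $V_1^\pi$-vertices, forcing $e,e'$ into the same block of $\tilde\rho(\pi)$ and hence the same strong component). A cross-component vertex identification not accompanied by an edge identification would similarly reduce $|\mcal V^\pi|$ below the extremal value, breaking $\eta(\pi)=0$. So no such identification can occur.

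The main obstacle I expect is the bookkeeping in the passage from $T^{\tilde\rho(\pi)}$ to $T^{\rho(\pi)}$ and the precise matching of ``strong components'' across these two quotients: one has to be careful that a simple cycle of $T^{\tilde\rho(\pi)}$ either survives intact or is destroyed in $T^{\rho(\pi)}$ — it cannot be partially merged — and that the $w$-forest/multiplicity data recorded on $\mcal T^\pi_w$ is consistent with the cactus structure seen on $T^{\rho(\pi)}$. The cleanest route is probably to first establish all four equalities as the \emph{definition} of the admissible $\pi$, then prove a lemma that a connected multigraph with tree skeleton and only simple-or-double edges is exactly a pseudo-cactus, and finally handle cross-component identifications one type at a time (extra-niche $x$, extra-niche $w$, pure vertex) using Lemmas \ref{Lem:ExtraXbound} and \ref{simpedge} together with the extremality of each term in \eqref{Eq:4termExp}.
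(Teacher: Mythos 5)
Your reduction of the four equalities \eqref{TreeCal}--\eqref{Forest} to the statement that $T^{\tilde\rho(\pi)}$ is connected with tree skeleton and only simple or double edges is correct and matches the paper's bookkeeping. The genuine gap is in the passage from $T^{\tilde\rho(\pi)}$ to $T^{\rho(\pi)}$, which is exactly where the paper invests its work (Lemmas \ref{SimpleIsolated} and \ref{Cycle}). Your claim that ``$T^{\rho(\pi)}$ is obtained from the pseudo-cactus $T^{\tilde\rho(\pi)}$ by separating some vertices, which keeps it a pseudo-cactus'' is false as a general graph-theoretic principle: un-identifying a vertex can create simple cycles that were only figure-eight closed walks downstairs. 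For instance, splitting the centre of a star made of three double edges (tree skeleton, all edges of multiplicity $2$ --- exactly the type of graph you obtain for $T^{\tilde\rho(\pi)}$) can produce $K_{2,3}$, in which every edge lies in two simple cycles. So knowing that $T^{\tilde\rho(\pi)}$ is a pseudo-cactus with tree skeleton does not by itself force $T^{\rho(\pi)}$ to be one; ruling out such configurations requires the niche-level analysis. The same objection hits your parenthetical ``splitting $V_1$-vertices cannot put two already-parallel edges into different strong components'': two edges parallel in $T^{\tilde\rho(\pi)}$ can perfectly well become non-parallel after splitting their common endpoint, so the second assertion of the proposition does not follow either.

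What is missing is the paper's \emph{separation argument}, which is the actual engine: one modifies $\pi$ (or even the reference graph $T$) by isolating a suspect vertex, observes that the $\delta^0$ weight is unchanged so the bound $\eta\le 0$ still applies to the modified configuration, and computes that $\eta$ has strictly increased --- a contradiction with $\eta(\pi)=0$. This is how the paper shows that a simple edge of $T^{\tilde\rho(\pi)}$ is a cut edge of $T^{\rho(\pi)}$ with no extra-niche identification (Lemma \ref{SimpleIsolated}), that each multiplicity-$2$ edge lies in a \emph{unique} simple cycle of $T^{\rho(\pi)}$, and that nothing in the niches of that cycle is identified outside it (Lemma \ref{Cycle}); the cycle itself is built by an explicit chain of $x$- and $w$-niche-neighbours whose existence and uniqueness rest on Lemma \ref{MultiplicityInW}, companion and parity arguments, and the forest condition on $\mcal T^\pi_w$ --- steps you only partially reproduce. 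Your closing appeal to ``extremality of each term in \eqref{Eq:4termExp}'' points in the right direction, but without the concrete perturbation mechanism it does not exclude, say, a vertex of the cycle being shared with another strong component, which is precisely the case the separation argument handles.
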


The motivations of this statement are presented next section in Proposition \ref{Th:UI}. The aim of this section is to prove the Proposition \ref{Prop:Support} and provide elements for the computation of the contribution on each strong component. 
The first lemma indicates the multiplicity of the edges labeled $w$ in the graphs that contribute in the large $N$ limit.

\begin{lemma}\label{MultiplicityInW} Assume that $T$ and $\pi \in \mcal P(\mcal V)$ are split, that $\delta^0[\mcal T^\pi(W',X')]$ is nonzero and $\eta(\pi)=0$. Then there are exactly one group of edges of multiplicity 3 labeled $w$ in $\mcal T^\pi$ within each niche corresponding to a simple edge of $T^{\tilde \rho(\pi)}$, and all other groups of edges labeled $w$ have multiplicity 2.
\end{lemma}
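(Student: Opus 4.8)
The plan is to read the multiplicities of the $w$-labelled edges straight off the equality case $\eta(\pi)=0$, which was already translated in Section \ref{Sub:Support} into the four conditions \eqref{TreeCal}--\eqref{Forest}, and then to confront the resulting count with the rigidity that Lemma \ref{simpedge} imposes on the niches of the simple edges of $T^{\tilde\rho(\pi)}$. As in the convergence proof, write $\mcal E^\pi_{w,i}$ for the set of groups of $w$-edges of multiplicity $i$ in $\mcal T^\pi$, let $l^\pi$ be the number of simple edges of $T^{\tilde\rho(\pi)}$, and set ${l'}^\pi=\sum_{i\ge3}|\mcal E^\pi_{w,i}|$.

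First I would use that $\delta^0[\mcal T^\pi(W',X')]=\delta^0[\mcal T^\pi_w(W')]\,\delta^0[\mcal T^\pi_x(X')]\neq0$ forces, by the centering of $W'$ and the independence of distinct entries, every group of $w$-edges of $\mcal T^\pi$ to have multiplicity at least $2$, i.e.\ $\mcal E^\pi_{w,1}=\varnothing$. Inserting this into \eqref{Eq:SkelCard}, the equality \eqref{Db} turns into $\sum_{i\ge3}(i-2)\,|\mcal E^\pi_{w,i}|=l^\pi$; as $i-2\ge1$ for $i\ge3$ this already yields ${l'}^\pi\le l^\pi$, with equality exactly when $|\mcal E^\pi_{w,i}|=0$ for every $i>3$, that is, when every group of $w$-edges of multiplicity $\ge3$ has multiplicity exactly $3$.

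Next I would obtain the reverse inequality from the niche structure. Let $e$ be a simple edge of $T^{\tilde\rho(\pi)}$. By Lemma \ref{simpedge} the internal vertices of the niche of $e$ are identified by $\pi$ only among themselves; hence the $\mbf n(e)$ edges labelled $w$ of that niche are partitioned according to the \emph{same} partition of the internal vertices that governs the $\mbf n(e)$ edges labelled $x$ of the niche, into groups that are not merged with any edge of another niche. Each block of that partition has size $\ge2$ --- otherwise the corresponding $w$-edge would be simple and $\delta^0[\mcal T^\pi_w(W')]$ would vanish --- and, $\mbf n(e)$ being odd, not all blocks can have size $2$, so the niche of $e$ contains at least one group of $w$-edges of multiplicity $\ge3$, entirely inside that niche. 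Niches of distinct edges being vertex- and edge-disjoint, summing over the $l^\pi$ simple edges gives ${l'}^\pi\ge l^\pi$.

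The two inequalities give ${l'}^\pi=l^\pi$, so every group of $w$-edges of multiplicity $\ge3$ has multiplicity exactly $3$ and there are exactly $l^\pi$ of them; since each of the $l^\pi$ pairwise disjoint niches of a simple edge carries at least one, the pigeonhole principle places exactly one group of $w$-edges of multiplicity $3$ in each such niche and none anywhere else, so every remaining group of $w$-edges has multiplicity $\le2$, hence exactly $2$ by $\delta^0\neq0$. The step I expect to be the main obstacle is the reverse inequality: one must make sure the multiplicity-$\ge3$ group produced inside the niche of a simple edge really stays inside that niche --- this is precisely the ``intra-niche identifications'' clause of Lemma \ref{simpedge}, without which the groups coming from different niches could not be counted separately --- and that an odd niche cannot meet the size-$\ge2$ constraint inherited from both $W'$ and $X'$ without an odd block of size $\ge3$; the rest is the bookkeeping already in place around \eqref{Eq:SkelCard} and \eqref{Db}.
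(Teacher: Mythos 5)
Your proof is correct and follows essentially the same route as the paper: no simple $w$-edges because $\delta^0[\mcal T^\pi_w(W')]\neq 0$, the equality case of \eqref{Db} rewritten via \eqref{Eq:SkelCard} as $\sum_{i\ge 3}(i-2)|\mcal E^\pi_{w,i}|=l^\pi$, and the parity/companion argument of Lemma \ref{simpedge} producing at least one $w$-group of multiplicity $\ge 3$ inside each niche of a simple edge of $T^{\tilde\rho(\pi)}$, whence the reverse count and the pigeonhole conclusion. The delicate point you flag, namely that these groups are attached to their own niche and hence counted once per simple edge, is exactly the point the paper also settles by appealing to the intra-niche clause of Lemma \ref{simpedge}, so your argument is at the same level of rigor as the paper's.
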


\begin{proof}
Recall that $\mathcal{E}^\pi_{w, 3} \subset \mathcal{E}^\pi_w$ stands for the set of edges of multiplicity equal to 3 in $\mathcal{T}_w^\pi$ and denote by $\mathcal{E}^\pi_{w, >3}$ those of multiplicity greater than 3. Since $T^\pi_w$ has no edge of multiplicity 1 (otherwise $ \delta^0 \big[ \mcal T_w^\pi(W')\big]=0$), then \eqref{Db} reformulates as 
\eq
 \frac{ l^\pi - | \mcal E^\pi_{w,3}|}2 + \big(  |\bar{\mathcal{E}}_{w,>3}^\pi| - \frac{|\mathcal{E}_{w,>3}^\pi|}{2}\big)&=&0.
\qe
The second term $\big(  |\bar{\mathcal{E}}_{w,>3}^\pi| - \frac{|\mathcal{E}_{w,>3}^\pi|}{2}\big)$  is non positive and vanishes whenever $\mathcal{E}^\pi_{w, >3} = \emptyset$. Moreover, since the number of edges in each niche is odd, and since by Lemma \ref{simpedge}  simple edges of $T^{\tilde \rho(\pi)}$ have no $x$-neighbor, each of these niches contain at least a group of edges labeled $x$ of multiplicity at least three. But if some edges forms an intra-niche pairing, so do their compagnons (recall from Definition \ref{TrondGraph} that two edges of $\mcal T$ are compagnon whenever they share the same internal vertex). Hence each niche associated to a simple edge of $T^{\tilde \rho(\pi)}$ contains at least one group of edges of multiplicity $\geq 3$ labeled $w$. Hence we get, $
l^\pi \leq |\bar{\mathcal{E}}_{w,\geq 3}^\pi|$, so that $(l^\pi - | \mcal E_{w,3}|)$ is non positive and vanishes whenever $|\mathcal{E}_{w, 3}| = l^\pi$. All together, this proves the lemma. 
\end{proof}

First we use the above lemma to relate the edges of multiplicity $1$ in $T^{\tilde \rho(\pi)}$ with the cut edges of $T^{\rho(\pi)}$.

\begin{lemma}\label{SimpleIsolated}
Assume that $T$ and $\pi \in \mcal P(\mcal V)$ are split, that $\delta^0[\mcal T^\pi(W',X')]$ is nonzero and $\eta(\pi)=0$. Let $e$ be an edge of $E$ of multiplicity $1$ in $T^{\tilde \rho(\pi)}$. Then it is a cut edge of $T^{\rho(\pi)}$ and its niche have no extra-niche identification. 
\end{lemma}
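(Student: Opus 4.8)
The plan is to prove the two assertions separately, as neither uses the other. Write $a\in V_2$ and $b\in V_1$ for the source and target of $e$ in $T$, and recall that the internal vertices of every niche belong to $V_0$ and that $\pi$ is split, so $\pi$ never identifies a vertex of $V_0$ with one of $V_2$. For the statement on the niche, the input is Lemma~\ref{simpedge}: since $e$ is a simple edge of $T^{\tilde \rho(\pi)}$, that lemma tells us that $e$ has no $x$-neighbour — so no $x$-edge of the niche of $e$ takes part in an extra-niche identification — and that the internal vertices of that niche only form intra-niche identifications — so none of them is $\pi$-equivalent to an internal vertex of another niche. It remains to handle the $w$-edges of the niche. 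Let $f=(v,b)$ be such a $w$-edge, $v$ being an internal vertex of the niche of $e$, and suppose $f$ is identified in $\mcal T^\pi$ with an edge $f'$ lying in a different niche; this forces the sources and the targets of $f$ and $f'$ to agree modulo $\pi$. If $f'$ is a $w$-edge, its source is an internal vertex $v'$ of that other niche, whence $v\sim_\pi v'$, contradicting the preceding line; if $f'$ is an $x$-edge, its source lies in $V_2$ and cannot be $\pi$-equivalent to $v\in V_0$ since $\pi$ is split. So no $w$-edge of the niche of $e$ is involved in an extra-niche identification either, which is the claim.

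For the cut-edge statement I would first show $e\in\mcal C_1(T^{\tilde \rho(\pi)})$. Since $\eta(\pi)=0$, equation~\eqref{TreeCal} holds; together with the facts already established — $T^{\tilde \rho(\pi)}$ is connected, has $c_\pi+|V_2^\pi|$ vertices and $|E|$ edges — this says that the skeleton $\bar T^{\tilde \rho(\pi)}$ is connected with exactly one more vertex than it has edges, i.e.\ is a tree. As $e$ has multiplicity $1$ in $T^{\tilde \rho(\pi)}$, it is the only edge between its two endpoints there, hence survives in $\bar T^{\tilde \rho(\pi)}$ as a single edge; deleting that edge disconnects the tree, and therefore disconnects $T^{\tilde \rho(\pi)}$ as well (a graph and its skeleton having the same connected components), so $e$ is a cut edge of $T^{\tilde \rho(\pi)}$. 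Now I would use the elementary fact that cut edges lift along vertex-quotients: if $G$ is connected and $G'$ is obtained from $G$ by identifying some vertices while keeping all edges, then any cut edge of $G'$ is a cut edge of $G$ — otherwise the two endpoints of such an edge $e$ would be joined in $G\setminus e$ by a path, whose image in $G'\setminus e$ would still join the images of those endpoints, contradicting $e\in\mcal C_1(G')$. Applying this with $G=T^{\rho(\pi)}$, which is connected as a quotient of the connected $T$, and $G'=T^{\tilde \rho(\pi)}$, which is a quotient of $G$ because $\rho(\pi)$ is finer than $\tilde \rho(\pi)$, yields $e\in\mcal C_1(T^{\rho(\pi)})$.

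The routine verifications — that identifying two edges of $\mcal T^\pi$ means identifying their respective sources and targets, and that deleting a multiplicity-one edge affects connectivity exactly as deleting the corresponding skeleton edge does — are immediate from the definitions. I expect the only real subtlety to be the direction of the quotient argument in the second part: it is tempting to read it backwards, but it genuinely produces a cut edge of the finer graph $T^{\rho(\pi)}$ out of one of the coarser graph $T^{\tilde \rho(\pi)}$, the point being that collapsing vertices can only create new cycles and never destroy an existing connection.
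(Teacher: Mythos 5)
The first half of your argument (no extra-niche identification in the niche $\mcal N$ of $e$) has a genuine gap. You attribute to Lemma~\ref{simpedge} the statement that no internal vertex of $\mcal N$ is $\pi$-equivalent to an internal vertex of another niche, and you then deduce from it that the $w$-edges of $\mcal N$ have no extra-niche identification. But Lemma~\ref{simpedge} (see its proof) only says that the $x$-edges of $\mcal N$, having to be identified somewhere and having no extra-niche partner, form intra-niche identifications; i.e.\ intra-niche identifications of internal vertices \emph{exist}. It does not exclude that an internal vertex of $\mcal N$ is \emph{in addition} identified with an internal vertex of a different niche: such a vertex identification need not identify any edges (an edge identification requires both endpoints to match), so it is not forbidden by $\delta^0[\mcal T^\pi(W',X')]\neq 0$, and nothing local rules it out. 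Excluding it is precisely part of what Lemma~\ref{SimpleIsolated} claims, and the paper proves it by the separation argument: isolate the offending internal vertices, observe that $\delta^0$ is unchanged because the edges of $\mcal N$ are identified only within $\mcal N$, and obtain $\eta(\check\pi)=\eta(\pi)+|\check V_0|>0$, contradicting $\eta\leq 0$. Note the logical order this imposes: one must first exclude extra-niche identifications of the \emph{edges} of $\mcal N$ (for $x$ by Lemma~\ref{simpedge}, for $w$ by the multiplicity count of Lemma~\ref{MultiplicityInW}: the intra-niche pairings plus one triple already saturate the one-group-of-multiplicity-$3$/all-others-multiplicity-$2$ structure, leaving no room for an extra partner), and only then can the vertices be handled by separation. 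Your proof runs in the opposite direction, taking the vertex statement for granted and deducing the $w$-edge statement from it, so the hard part of this half is missing.

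The second half (the cut-edge claim) is correct, and it takes a genuinely different and arguably cleaner route than the paper. Where the paper performs a surgery on $T$ (detaching an endpoint of $e$, with a two-case analysis depending on whether $e$ is a cut edge of $T$), you read off from \eqref{TreeCal} that the skeleton of the connected graph $T^{\tilde\rho(\pi)}$ is a tree, note that a multiplicity-one edge survives as a genuine skeleton edge whose removal disconnects, and then lift the bridge property from the quotient $T^{\tilde\rho(\pi)}$ back to $T^{\rho(\pi)}$, using that $\rho(\pi)$ refines $\tilde\rho(\pi)$ and that collapsing vertices can only create, never destroy, connections. This argument is sound, uses only facts already established before the lemma, and — unlike the paper's version — does not rely on the first half of the statement.
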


\begin{proof} Let $e$ be an edge of $E$ of multiplicity $1$ in $T^{\tilde \rho(\pi)}$ and denote in short by $\mcal N$ its niche. We first prove the second part of the lemma. It is known from Lemma \ref{simpedge} for the edges labeled $x$ in $\mcal N$ have no extra-niche identification, let us prove it for the edges labeled $w$. Lemma \ref{simpedge} also tells that all internal vertices in $\mcal N$ have always intra-identifications. Hence there is one group of internal vertices of multiplicity  at least 3 in $\mcal T^{\rho(\pi)}$, and the other groups may have multiplicity greater than or equal to 2. But Lemma \ref{MultiplicityInW} tells that the edges labeled $w$ in this niche form one group of multiplicity 3 and other groups of multiplicity at least 2. All together, this prove that there are no additional extra-niche identifications.

Let us now prove that there are no extra-niche identification of internal vertices in $\mcal N$. The following argument, referred as the argument of \emph{separation}, is use several times in the sequel. Denote by $\check V_0$ the set of internal vertices of $\mcal N$ that are identified with a vertex outside its niche and let us prove that $\check V_0=\emptyset$. Denote by $\check \pi \in \mcal P(V)$ the modification of $\pi$ obtained by isolating the vertices of $\check V_0$ in the following sense
	\eqa\label{DefSeparation}
		\check \pi = \big\{ B \setminus \check V_0, B\in \pi \big\} \sqcup \big \{ B \cap \check V_0, B\in \pi \big\}.
	\qea
This modification does not change the $\delta^0$ weight associated to the partition, that is $\delta^0[\mcal T^\pi(W',X')] = \delta^0[\mcal T^{\check \pi}(W',X')]\neq 0$. Hence by the previous section, we have $\eta(\check \pi)\leq 0$. But by definition $\eta(\check \pi )  = \eta(\pi)+ | \check V_0|$ and by hypothesis $\eta(\pi)=0$, so $| \check V_0|=0$. We have prove that there are no extra-niche identifications in the niche of $e$.

It remains to prove that $e$ is necessarily a cut edge of $T^{\rho(\pi)}$. For clarity of the presentation, assume first that $e$ is not a cut edge of $T$ (in which case it cannot be a cut edge of $T^{\rho(\pi)}$) and let us find a contradiction. We use a similar argument as in the previous paragraph, considering a modification $\check T$ of $T$ obtained by separating a vertex $v$ of $e$ as follow: we add a vertex $v'$ and decide that $e$ is adjacent to $v'$ instead of $v$. The resulting graph is connected. Moreover, separating $\check V_0:=\{v\}$ in $\pi$ as in \eqref{DefSeparation} gives a partition $\check \pi$ of $\mcal T_{\check T}$ such that $\delta^0[\mcal T_{\check T}^{\check \pi}(W',X')] =\delta^0[\mcal T^\pi(W',X')] \neq 0$ and $\eta(\check \pi )  = \eta(\pi)+1$. Hence a contradiction.

Finally, we consider the case where $e$ is a cut edge of $T$ but not of $T^{\rho(\pi)}$. Let $S$ be the disconnected graph obtained from $T$ by separating a vertex $v$ of $e$. Denote by $S_1$ the connected component of $e$ in $S$ and $S_2$ its complementary. As before, separating $v$ in $\pi$ provides a partition $\check \pi$ of the vertices of $S$ with same $\delta^0$ weight. A priori we cannot use the previous reasoning since $S$ is disconnected, but actually we get the same contradiction by considering another graph. Let $\check T$ be obtained from $S$ by identifying a vertex of $S_1$ and a vertex of $S_2$ that are identified by $\pi$ (it always exists since $e$ is not a cut edge of $T^{\rho(\pi)}$. The partition $\check \pi$ induces a partition $\check \pi'$ of the vertices of $\check T$ with same $\delta^0$ weight and such that $\eta(\check \pi')= \eta(\pi)+1$ (we have an additional vertex without changing the number of edges), obtaining the same contradiction.
\end{proof}

We now related the edges of $T^{\rho(\pi)}$ that are of multiplicity 2 in $T^{\tilde \rho(\pi)}$ with the simple cycles of $T^{\rho(\pi)}$.

\begin{lemma}\label{Cycle}
Assume that $T$ and $\pi \in \mcal P(\mcal V)$ are split, that $\delta^0[\mcal T^\pi(W',X')]$ is nonzero and $\eta(\pi)=0$. Let $e \in E$ be an edge of multiplicity $2$ in $T^{\tilde \rho(\pi)}$. Then $e$ belongs to a unique simple cycle of $T^{\rho(\pi)}$. Moreover, none of the edges or internal vertices in the niches of the edges of this cycle is identified out the union of these niches.
\end{lemma}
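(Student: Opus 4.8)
The plan is to extract the simple cycle from the path structure of $\mcal T_w^\pi$, which is manageable because its skeleton is a forest, and then to confine all identifications to the niches of the cycle's edges by the separation argument already used in Lemma \ref{SimpleIsolated}. First I would name the companion: by hypothesis there is a \emph{unique} edge $e'\in E$, distinct from $e$, forming with $e$ a group of multiplicity $2$ in $T^{\tilde\rho(\pi)}$; unwinding the definition of $\tilde\rho(\pi)$, this says that the sources of $e$ and $e'$ lie in a common block of $\rho(\pi)$ and that the $\rho(\pi)$-blocks of their targets lie in a common connected component of $\mcal T_w^\pi$. By \eqref{Forest} the skeleton of $\mcal T_w^\pi$ is a forest, so there is a unique simple path $P$ in $\mcal T_w^\pi$ joining those two $\rho(\pi)$-blocks (trivial exactly when they coincide, in which case $\{e,e'\}$ is a bigon of $T^{\rho(\pi)}$). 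The path $P$ alternates between $V_1$-blocks and internal-vertex blocks; at each internal-vertex block $u$ traversed by $P$, its two incident $w$-edges belong to the niches of two edges of $T$, and reading $P$ off one obtains a sequence $e=f_0,f_1,\dots,f_r=e'$ of edges of $T$.

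The heart of the argument is to show that this sequence traces a closed walk $\mcal C$ in $T^{\rho(\pi)}$, i.e.\ that consecutive $f_\ell,f_{\ell+1}$ have $\rho(\pi)$-identified sources. I would argue as follows. The edges $f_\ell$ lie on the cycle being built, so they are non-simple in $T^{\tilde\rho(\pi)}$, and hence by Lemma \ref{MultiplicityInW} the $w$-edges of their niches that occur along $P$ sit in groups of multiplicity exactly $2$; since $\delta^0[\mcal T^\pi(W',X')]\neq 0$, the companion $x$-edges of these $w$-edges must pair up as well, and---using the oddness of the number of internal vertices in each niche, as in the proof of Lemma \ref{MultiplicityInW}, together with a separation argument in the style of \eqref{DefSeparation}---one checks that any failure of source $\rho(\pi)$-identification at some $u$ would let one split off a vertex of $u$, raising $\eta$ strictly above $0$ and contradicting $\eta(\pi)=0$. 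Minimality of $P$ then forces $\mcal C$ to be a genuine simple cycle through $e$; its uniqueness is immediate, since a second simple cycle through $e$ would produce either a second multiplicity-$2$ companion of $e$ in $T^{\tilde\rho(\pi)}$ or a second simple path in the forest $\mcal T_w^\pi$, both impossible.

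It remains to confine the identifications. Let $\mcal N$ be the union of the niches of the edges of $\mcal C$. From the construction above, together with Lemmas \ref{MultiplicityInW} and \ref{Lem:ExtraXbound} (and \eqref{DbCal}, which forbids multiplicity larger than $2$ in $T^{\tilde\rho(\pi)}$), the $w$- and $x$-edges inside $\mcal N$ pair up among themselves, so the pairing structure supporting $\delta^0$ on $\mcal N$ is self-contained; hence if $\pi$ identified an internal vertex or an edge of $\mcal N$ with one lying outside $\mcal N$, separating from $\pi$ the vertices of $\mcal N$ involved in such identifications (as in \eqref{DefSeparation}) would leave $\delta^0[\mcal T^{\check\pi}(W',X')]=\delta^0[\mcal T^\pi(W',X')]\neq 0$ unchanged while strictly increasing $\eta$, contradicting $\eta(\check\pi)\leq 0=\eta(\pi)$. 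This gives the second assertion. The main obstacle is the middle paragraph: proving that the path in $\mcal T_w^\pi$ really unfolds into a simple cycle of $T^{\rho(\pi)}$ rather than into a more entangled configuration; this is where the global constraints \eqref{Db}, \eqref{DbCal}, \eqref{Forest} and Lemma \ref{MultiplicityInW} must be combined delicately with the separation argument, and where degenerate walks that revisit a niche, or cycle edges whose multiplicity in $T^{\tilde\rho(\pi)}$ is not $2$, must be excluded.
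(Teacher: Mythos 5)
There is a genuine gap, and it sits exactly where you flag it: the middle paragraph. Your route (extract the unique path $P$ in the forest skeleton of $\mcal T_w^\pi$ joining the target blocks of $e$ and $e'$, then ``unfold'' it into a cycle of $T^{\rho(\pi)}$) is different from the paper's, but the unfolding step is asserted rather than proved, and the mechanisms you invoke do not work as stated. First, the reading-off of a sequence $f_0,\dots,f_r$ of edges of $T$ is not well defined: an edge-group of $\mcal T_w^\pi$ incident to an internal block $u$ on $P$ may contain $w$-edges from several different niches, so ``the two incident $w$-edges belong to the niches of two edges of $T$'' needs an argument. Second, your justification that the $f_\ell$ are non-simple in $T^{\tilde\rho(\pi)}$ (``they lie on the cycle being built'') is circular, since the cycle is what you are constructing. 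Third, and most importantly, the claim that a failure of source identification at some block $u$ could be repaired by ``splitting off a vertex of $u$'' misuses the separation argument: the separation trick \eqref{DefSeparation} only applies when the split does not alter edge multiplicities (otherwise $\delta^0$ changes, possibly to $0$, and the inequality $\eta(\check\pi)\le 0$ tells you nothing); splitting a block genuinely shared by two niches whose $w$-edges are paired across them does alter multiplicities. The paper gets source identification by a different mechanism: an extra-niche identification of $w$-edges forces, via the companion relation and the multiplicity-$2$ constraint of Lemma \ref{MultiplicityInW}, an extra-niche identification of the companion $x$-edges, and it is this $x$-identification that identifies the \emph{sources} of the corresponding edges of $T$ (the $x$-edges of a niche emanate from the source). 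It then proves the existence \emph{and uniqueness} of the $x$- and $w$-neighbors of each niche and obtains the cycle by simply following these unique neighbors — no unfolding of a path is needed.

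Your uniqueness argument is also insufficient: a second simple cycle of $T^{\rho(\pi)}$ through $e$ need not come from niche identifications at all — it can be created purely by the vertex identifications of $\rho_0$ among edges of $T$ unrelated to $e$'s niche — so it would produce neither ``a second multiplicity-$2$ companion of $e$ in $T^{\tilde\rho(\pi)}$'' nor ``a second path in $\mcal T_w^\pi$''. This is precisely why the paper needs an extra step at the end of its proof: remove the cycle edges from $T^{\rho(\pi)}$, and if some connected component of the remainder met the cycle in two vertices, split one such common vertex of $T^{\rho(\pi)}$ (a separation on the reference graph itself, as in Lemma \ref{SimpleIsolated}) to produce a quotient with strictly larger $\eta$, a contradiction. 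Your final confinement paragraph is in the right spirit (it mirrors the paper's separation argument for the internal vertices of $\mcal N(\mcal C)$), but it relies on the self-contained pairing structure of the cycle's niches, which is exactly the output of the unproved middle step. To repair the proposal you would essentially have to reinstate the paper's neighbor-existence/uniqueness analysis, at which point the path-in-the-forest detour becomes unnecessary.
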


\begin{proof}
Let $e_0$ be an edge of $E$ of multiplicity $2$ in $T^{\tilde \rho(\pi)}$ and denote in short by $\mcal N_0$ its niche. We want to prove that $e_0$ has a unique $x$-neighbor $e_1$ and has a unique $w$-neighbor $e_{-1}$. In other words, there exists a couple of compagnon vertices $e^{(0)}_w, e^{(0)}_x$ in $\mcal N_0$ with extra-niche identifications. The latter couple is not necessarily unique, but our analysis below shows that when it is not unique then $e_0$ belong to a cycle of length two. 

The existence of a $w$-neighbor $e_{-1}$ of $e_0$ follows from Lemma \ref{MultiplicityInW}, which indicates that that the multiplicity of the edges labeled $w$ of $\mcal T^{\tilde \rho(\pi)}$  is two. Indeed, since the number of edges of each label in a niche is odd, a parity argument ensures that there exists an edge $e^{(0)}_w\in \mcal N_0$ and an edge $e^{(-1)}_w$ in the niche $\mcal N_{-1}$ of another edge $e_-1$ of $T$ that are identified.

To prove the existence a $x$-neighbor $e_1$ of $e_0$, let us prove that the compagnon $e^{(0)}_x$ of $e^{(0)}_w$ has  an extra-niche identification. Assume momentarily that $e_x^{(0)}$ has no extra-niche identification and let us find a contradiction. Since $\delta^0[\mcal T^\pi(W',X')]$ is nonzero, the multiplicity of $e_x^{(0)}$ is greater than 1, so it forms an identification which is necessarily an intra-niche identifications if the latter assumption is valid. But when two edges form an intra-niche identification, so do their compagnons. This implies that $e^{(0)}_w$  has both intra and extra-niche identification, so its multiplicity in $\mcal T^\pi$ is at least 3. This is in contradiction with Lemma \ref{MultiplicityInW} which says that this multiplicity is 2. Hence $e^{(0)}_x$ has an extra-niche identification with an edge $e^{(1)}_x$. We denote by $\mcal N_1$ the niche of $e^{(1)}_x$ and by $e_1$ the associated $x$-neighbor of $e_0$. 

We now show the unicity of the $x$-neighbor $e_1$. Let $e_1'$ be any $x$-neighbor of $e_0$, and denote by ${e'}^{(1)}_x$ and ${e'}^{(0)}_x$ two edges in the respective niches that are identified. This identification implies that the sources of $e_0$ and $e_1'$ are equal in $\mcal T^{\tilde \rho(\pi)}$. Also, the sources of the compagnons of ${e'}^{(1)}_x$ and ${e'}^{(0)}_x$ are also equal, so their targets belong to the same connected component. Hence $e_0$ and $e_1'$ form an edge of multiplicity at least two in $T^{\tilde \rho(\pi)}$. On the other hand, $\eta(\pi)=0$ implies that $T^{\tilde \rho(\pi)}$ has edges of multiplicity two. This proves that $e_1'=e_1$ since otherwise this will exhibit an edge of multiplicity greater than 2.

We now prove the uniqueness of the $w$-neighbor $e_{-1}$ of $e_0$. Recall that $e^{(-1)}_w$ and $e^{(0)}_w$ denote two edges in the respective niches forming an extra-niche identification. Assume that there is another edge ${e'}_w^{(0)}\in \mcal N_0$ forming an extra-niche identification. We then consider the compagnon ${e'}_x^{(0)}$ of this new edge ${e'}_w^{(0)}$. By the above, the $x$-neighbor is unique, so ${e'}_x^{(0)}$ has also an extra-niche identification with an edge ${e'}_x^{(1)}$ in the niche $\mcal N_1$ of $e_1$. Note that ${e'}_x^{(1)} \neq e_x^{(1)}$ otherwise $e_w^{(0)},  {e'}_w^{(0)}$ and $e_w^{(-1)}$ might form an edge of multiplicity at least 3 in $\mcal T_\omega^\pi$. Now consider the graph consisting in $e_w^{(0)}$, $  {e'}_w^{(0)}$, and the compagnons of  $  e_x^{(1)}$ and $ {e'}_x^{(1)}$. It consists in a cycle of length 4, which is a simple cycle if the vertices are pair-wise distinct. By construction, one sees directly that the edges are pairwise distinct and so are their source vertices. On the other hand, the condition $\eta(\pi)=0$ implies that the skeleton of $\mcal T_w^\pi$ is a forest. Hence the graph is not a simple cycle, which means that the targets of the edges forming this graph are identified. Hence $e_w^{(0)}$ and $e_w^{(1)}$ are identified and so $e_{-1} = e_1$. This prove the uniqueness of $e_{-1}$, and that $e_{-1} = e_1$ when $e_0$ and $e_{-1}$ have more than one group of edges identified to form extra-niche identification (this fact will be relevant later on). 

We can now prove the lemma. Starting with $e_0$ we construct a sequence $e_i,i=0\etc r$ of edges in $T$ such that, if $i$ is even, $e_i$ and $e_{i+1}$ are $x$-neighbors, and if $i$ is odd they are $w$-neighbors, until we come back to an edge we have already visited ($r = \min\{ s  | e_s \in \{ e_0\etc e_{s-1}\}$). By uniqueness of extra-niche neighbors, we necessarily have $e_{r} = e_0$ (and so $r$ is even).

Let us denote by $\mcal C = \{e_0 \etc e_{r-1}\}$ the set of edges forming this cycle and by $\mcal N(\mcal C)$ the union of the niches of elements of $\mcal C$. By construction, the edges of $\mcal N(\mcal C)$ are not identified outside of $\mcal N(\mcal C)$. We then deduce that tis property holds for the internal vertices of $\mcal N (\mcal C)$ with the usual argument of separation. We modify $\pi$ into $\check \pi$ to separate the set $\check V_0$ of vertices of $\mcal N(\mcal C)$ that are identified outside of $\mcal N(\mcal C)$. This does not change the $\delta^0$ weight since it does not modify the multiplicity of the edges. Hence it produces a partition such that $\eta(\check \pi)\geq 0$ and $\eta(\check \pi) = \eta(\pi)+|\check V_0|$, the assumption $\eta(\pi)$ implying that $\check V_0 = \emptyset$. This proves the second part of the lemma. 

Finally we can prove that $e_0$ belongs to a unique cycle of $T^{\rho(\pi)}$. Let $S$ be the graph obtained by removing to $T^{\rho(\pi)}$ the edges of $\mcal C$, and with a small abuse use $\mcal C$ to refer to the subgraph of $T^{\rho(\pi)}$ formed by these edges. Note that $e_0$ belongs to a unique cycle if and only if the connected components of $S$ have exactly one vertex in $\mcal C$. On the other hand, the presence of a connected component $C$ with at least two vertices in $\mcal C$ would yield a contradiction thanks to the separation argument: one modifies $T^{\rho(\pi)}$ by separating a vertex common to $\mcal C$ and $C$, which does not disconnect the graph, producing a quotient of higher but bounded contribution. Hence $e_0$ belong to a unique cycle, which concludes the proof of the lemma.
\end{proof}

Since Condition \eqref{Db} says that the edges of $T^{\tilde \rho(\pi)}$ are of cardinality 1 or 2, the two above lemmas show Proposition \ref{Prop:Support}.

\subsection{Asymptotic expression of the traffic distribution}

 Given a partition $\rho_0 \in \mathcal{P}({V})$ such that $T^{\rho_0}$ is a pseudo-cactus and a partition of the auxiliary graph $\pi \in \mcal P(\mcal V)$, we write $\rho(\pi)=\rho_0$ is a shortcut to say that $\pi$ is a split partition such that $\delta^0[\mcal T^\pi(W',X')]\neq 0$, $\eta(\pi)=0$ and the restriction of $\pi$ on $V$ is $\rho(\pi) = \rho_0$.

 The previous section proves that for any reference graph $T$ we have the asymptotic expression
	\eqa\label{FormulaSec4}
	 \lefteqn{\tau_N \big[ T( \mbf Y) \big]  } \\
		 &=&   \sum_{\substack{\rho_0 \in \mathcal{P}({V}) \mrm{\ s.t.}\\ T^{\rho_0} \mrm{\ pseudo \ cactus}}}    \ \sum_{\substack{\pi \in \mathcal{P}({V}) \\ \rho(\pi) = \rho_0}}
		   \Psi^\pi \delta^0\big[\mcal T^\pi(\Gamma_w, \Gamma_x)\big]\delta^0[\mcal T^\pi(W',X')] + o(1),\nonumber
	\qea
where we recall \eqref{DefPsi} and Definition \ref{Def:delta}: 
\eq
	\Psi^\pi =  \psi_0^{|V_0^\pi| -\sum\limits_{e\in E} \frac{\mbf n(e)-1}{2}}  \psi_2^{|V_2^\pi|}  \psi_1^{|V_1^\pi|}, \quad \delta^0\big[\mathfrak T(\mbf A) \big]=  \esp\left[ \prod_{e \in\mathfrak  E} \tilde A_{\gamma(e)}\big( \Phi(e)  \big) \right],
\qe
$\psi_i = \frac{N_i}{ {N}}$ for $i=0,1,2$ and $\Phi: \mathfrak V \to [N]$ is split, injective and uniformly distributed independently of $\mbf A$.

It may be useful to compare Formula \ref{FormulaSec4} with the following consequence of \cite[Chapter 6]{Male2020} and \cite[Part II]{CDM24}. In the proposition below, we call \emph{well-oriented} (w.o.) pseudo-cactus a pseudo-cactus for which the edges of each simple cycle follow a same orientation along their cycle.

\begin{proposition}\label{Th:UI} Let $\mbf A=(A_j)_{j \in J}$ be a family of $N$ square random matrices that are unitarily invariant in law.  Assume that $\mbf A$ converges in non commutative distribution and satisfies the asymptotic fractorization property. Denote by $\mbb J_N$ the matrix whose all entries are $\frac 1 N$. Then for all $\beta\in \mbf C$ the collection $\mbf A+\beta \mbb J_N := (A_j + \beta \mbb J_N)_{j\in J}$ converges in traffic distribution. Moreover, for any test graph $T$ in the variables $\mbf a=(a_j)_{j\in J}$ and any partition $\rho_0$ of the vertex set of $T$, we have
	\eq
	 \tau_N \big[ T(  \mbf A+\beta \mbb J_N) \big]  	 &\limN&   \sum_{\substack{\rho_0 \in \mathcal{P}({V}) \mrm{\ s.t.}\\ T^{\rho_0} \mrm{\ w.o. \ pseudo \ cactus}}}  	\prod_{ C \in \mcal {SC}(T^\rho_0)} \tau^0(C),
	\qe
where for any $C \in  \mcal {SC}(T^\rho)$, if $C$ is a cut-edge then $\tau^0(C) = \beta$, and if $C$ is a simple cycle then $\tau^0(C) = \kappa_n(a_1\etc a_n)$ is the $n$-th free cumulant function applied to the limit of the matrices $A_1\etc A_n$ along the cycle of $C$. \end{proposition}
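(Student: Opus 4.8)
The plan is to reduce the statement, by multilinearity of the graph evaluation, to the traffic distribution of the single unitarily invariant family $\mbf A$ (available from \cite[Chapter 6]{Male2020}, in the form also recorded in \cite[Part II]{CDM24}) together with the elementary fact that every entry of $\mbb J_N$ equals $N^{-1}$. First I would use that the evaluation \eqref{Eq:EvalGraphPolyn} is multilinear in the matrices carried by the edges to write $\tau_N\bigl[T(\mbf A+\beta\mbb J_N)\bigr]=\sum_{S\subseteq E}\beta^{|S|}\,\esp\bigl[\tfrac1N\sum_{\varphi:V\to[N]}\prod_{e\in S}\mbb J_N(\varphi(e))\prod_{e\in E\setminus S}A_{\gamma(e)}(\varphi(e))\bigr]$. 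Since a $\mbb J_N$-edge merely contributes the scalar $\tfrac1N$ and imposes no constraint on $\varphi$, writing $c_S$ for the number of connected components of $(V,E\setminus S)$ and $C$ for a generic such component, the sum over $\varphi$ splits and
$$\tau_N\bigl[T(\mbf A+\beta\mbb J_N)\bigr]=\sum_{S\subseteq E}\frac{\beta^{|S|}}{N^{|S|+1}}\,\esp\Bigl[\prod_{C}\ \sum_{\varphi_C:V_C\to[N]}\ \prod_{e\in E_C}A_{\gamma(e)}(\varphi_C(e))\Bigr].$$

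Next I would apply the asymptotic factorization property to replace the expectation of the product over components by the product of the expectations up to a relatively negligible error; each factor is then $N\,\tau_N[C(\mbf A)]$, which converges to $N\,\tau[C(\mbf a)]$ with $\tau[C(\mbf a)]$ finite because a unitarily invariant family converges in traffic distribution. The $S$-term thus behaves like $N^{\,c_S-|S|-1}$ times a bounded quantity. Contracting the edges of $E\setminus S$ turns $T$ into a connected multigraph on $c_S$ vertices with $|S|$ edges, so $c_S\le|S|+1$ with equality exactly when that multigraph is a tree, i.e. when every edge of $S$ becomes a cut edge of it; only those $S$ survive in the limit.

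For a surviving $S$ and a component $C$ I would expand $\tau[C(\mbf a)]=\sum_{\sigma}\tau^0[C^\sigma(\mbf a)]$ over split partitions $\sigma$ of $V_C$ and invoke \cite[Chapter 6]{Male2020}: invariance of $\mbf A$ under conjugation by diagonal unitaries already forces $\tau^0[C^\sigma(\mbf a)]=0$ unless $C^\sigma$ has no cut edge (off-diagonal entries are centered and injectivity keeps the two endpoints distinct) and unless at each vertex of every cycle of $C^\sigma$ one incident edge enters and one leaves; then, by convergence in non-commutative distribution and the underlying Weingarten calculus, $\tau^0[C^\sigma(\mbf a)]$ equals the product of the free cumulants $\kappa_n$ over the simple cycles of $C^\sigma$. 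It remains to glue the data $\bigl(S,(\sigma_C)_C\bigr)$ into a single partition $\rho_0$ of $V$: the quotient $T^{\rho_0}$ is then precisely a well-oriented pseudo-cactus whose cut edges are the images of the edges of $S$ (each weighted by $\beta$) and whose simple cycles are those produced by the $\sigma_C$ (each weighted by the corresponding free cumulant), and conversely every such $\rho_0$ arises this way; summing gives the announced formula, and since it is a finite sum of convergent terms it also shows that $\mbf A+\beta\mbb J_N$ converges in traffic distribution.

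The step I expect to be the main obstacle is the factorization over connected components: proving $\esp\bigl[\prod_C(\cdots)\bigr]=\prod_C\esp[\cdots]+o(N^{c_S})$ is exactly where the asymptotic factorization hypothesis enters, and it requires bounding the covariances of the trace-type quantities carried by distinct components, which in turn means translating that hypothesis — usually phrased for polynomials in the matrices — into the language of combinatorial traces of test graphs. The combinatorics of the last step — that $\bigl(S,(\sigma_C)_C\bigr)\leftrightarrow\rho_0$ is a weight-preserving bijection — is routine but somewhat lengthy; the one delicate point is that the ``contracted graph is a tree'' condition from the power counting is exactly what rules out multiple edges and loops after contraction, and hence matches the pseudo-cactus requirement that each edge lie in at most one simple cycle.
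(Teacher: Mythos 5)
Your outline is essentially sound, but note first that the paper gives no proof of Proposition \ref{Th:UI}: it is stated as a direct consequence of \cite[Chapter 6]{Male2020} and \cite[Part II]{CDM24}, where the matrix $\beta\mbb J_N$ is precisely the ``Boolean-type'' component and the limit of $\mbf A+\beta \mbb J_N$ comes out of asymptotic traffic independence between a unitarily invariant family and $\mbb J_N$. Your route replaces that black box, for the $\mbb J_N$ part only, by a hands-on computation: the multilinear expansion over the set $S$ of edges carrying $\mbb J_N$, the splitting of the combinatorial trace over the connected components of $(V,E\setminus S)$, the power counting $c_S\le |S|+1$ with equality exactly on trees, then the same core theorem as the paper cites (limiting injective traffic distribution of a unitarily invariant family supported on well-oriented cacti, with free-cumulant weights) applied componentwise, and finally the gluing bijection; the latter is indeed correct, the key check being that a $\rho_0$ with pseudo-cactus quotient can neither identify vertices from distinct components of $(V,E\setminus S)$ nor create a cycle in the component/cut-edge incidence graph, since either would put the image of an $S$-edge on a closed walk with pairwise distinct edges and hence on a simple cycle, contradicting that it is a cut edge. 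What your approach buys is a self-contained treatment of the $\beta\mbb J_N$ deformation without invoking traffic independence; what the paper's citation buys is that the two points you flag are already discharged in the references. On those: (i) the factorization across components must hold for test-graph observables (and you also need boundedness of $\tau_N[C(\mbf A)]$ in the power counting), which does not follow from factorization of non-commutative moments alone but is part of the Weingarten analysis establishing traffic convergence of unitarily invariant ensembles in \cite{Male2020,CDM24} — so it is a citation to be made precise rather than a true gap; (ii) the vanishing on cut edges is better justified by the in-degree-equals-out-degree constraint coming from invariance under conjugation by diagonal unitaries (centeredness of a single off-diagonal entry is not sufficient by itself, as the entries along the graph are dependent), which is also where the well-orientation condition on the cycles originates.
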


The rectangular analogue can be deduced from the computations of \cite{Zit24} (explaining our scaling factor) and the real analogue (with orthogonal invariance) from \cite{Au16}. The method consists hence in  re-writing \eqref{FormulaSec4} by summing over the partitions of the reference graph $T$ rather than $\mcal T = \mcal T_T$ and  exhibiting some factorization structure with respect to the strong components.

The latter is the motivation for the second part of the statement in Proposition \ref{Prop:Support}: it shows that these partitions $\pi\in \mcal P(\mcal V)$ that contribute can be factorized with respect to the niches of the strong components $S$ of $T^{\rho_0}$ in the following way. For any strong component $S \in {SC}(T^\rho_0)$, and any $\pi$ such that $\rho(\pi)=\rho_0$, denote by $\pi_S$ the restriction of $\pi$ to the vertices in $S$ and its niche vertex. Then since $\pi$ does not identify extra-strong components internal vertices, $\pi$ is the finest of all partitions in  $\mcal P(\mcal V)$ such that the internal blocks of $\pi_S$ are contained in blocks of $\pi$, for any $S \in {SC}(T^{\rho_0})$. 

In the three following subsections, we consider a strong component $S=S(\rho_0)$ of $\mcal T^{\rho_0}$ of a given type, i.e. a cut-edge, a length 2 cycle or a higher length cycle. In each case, we describe the partition $\pi_S$ and the subgraph of $\mcal T^\pi$ induced by the niches of the edges of $S$, and gives an illustration in Figure \ref{F02_Pad}. We denote from now by $n_1(S) \etc n_{L_S}(S)$ an enumeration of the edge labels in $S$, with the shortcuts $n(S):=n_1(S)$ if $L_S\leq 2$ and $n'(S):=n_2(S)$ if $L_S=2$. We write ''$n=2k+1$`` with all variations of indices.

\subsubsection{Focus on cut edges}\label{Focus1} We denote by $\binom{p}{q} = \frac{p!}{q!(p-q)!}$ the usual binomial coefficient counting the number of choice of $q$ element among $p$, and by $\mcal P_2(n)$ the set of pairings of $n$ elements.  Recall that if $\xi$ denotes a standard real Gaussian random variable, then $\esp[\xi^n] = |\mcal P_2(n)|$. For each $n\geq1$, we recall that $h_n:x\mapsto x^n$ denotes the $n$-th power function. Then the Gaussian integration  formula reads $ n \esp[h_{n-1}(\xi)] = \esp[h'_n(\xi)]$. 

Assume that $S\in {SC}(T^\pi)$ consists in a cut edge $e$. Lemma \eqref{MultiplicityInW} says that $\pi_S$ identifies 3 internal vertices to form a first block and pairs the other internal vertices. Hence $n(S)\geq 3$ otherwise we cannot form a group of 3 internal edges of same label. 
We have a total number of  
	$$\binom{n(S)}3\big|\mcal P_2\big(n(S)-3\big)\big| = \binom{n(S)}3 \esp[\xi^{n(S)-3}] =\esp[h'''_{n(S)}(\xi)]/6 $$
 partitions as above, any of them having $k(S)$ internal vertex blocks.

\subsubsection{Focus on length 2 simple cycles}\label{Focus2} Assume that $S$ consists in a cycle of length 2. Lemma \ref{MultiplicityInW} implies that the internal vertices are paired. It must have at least one block formed by an internal vertex of the niche of each edge, but since there is an odd number of vertices in each niche, each pairing satisfies this property. We have a total number of 
 	$$ \big|\mcal P_2\big(n(S)+n'(S)\big)\big| =  \esp[\xi^{n(S)+n'(S)}]=  \esp \big[  h_{n(S)(\xi)} h_{n'(S)}(\xi) \big]$$
partitions $\pi_S$ as above, any of them having with a total of $k(S) +k'(S) +1$ 
internal vertex blocks.

\subsubsection{Focus on higher lengths}\label{Focus3} Assume that $S$ consists in a simple cycle of length $L\geq 3$ of extra-niche successive neighbors. Constructing the cycles in the proof of Lemma \ref{Cycle}, we have shown that in the niche of each edge of $S$ there is an edge labeled $x$ (the one realizing the neighboring) such that $\pi$ identifies the targets of all these edges. This forms a first  block of $\pi_S$, that we refer as the central block (note that it contains at least one internal vertex from the niche of each edge of $S$). This central block actually cannot contain more that one vertex from each niche: otherwise one sees easily that this will produce an edge labeled $w$ with multiplicity greater than 2 with the usual compagnon argument. Moreover, the proof of Lemma \ref{Cycle} shows that each niche has a single  edge labeled $w$ forming an extra-niche identification unless the cycle is of length 2. This implies the same property for edges labeled $x$ (since the compagnon of edge labeled $x$ forming extra-niche identification also form an extra-niche identification by the multiplicity 2 constraint).  The conclusion is that $\pi$ consists in the central block together with pairings of the remaining vertices in order to pair the edges labeled $w$. 
 
To chose $\pi_S$ we can first chose its central block by choosing one internal vertex in each niche, and then we chose intra-niche pairing of the remaining vertices, which gives a total number of 
 	$$  \prod_{\ell=1}^{L_S} n_\ell(S)  \big|\mcal P_2\big(n_\ell(S)-1\big)\big| = \prod_{\ell=1}^{L_S} n_\ell(S)   \esp[\xi^{n_\ell(S) - 1}] = \prod_{\ell=1}^{L_S} \esp\big[ h'_{n_\ell(S)}(\xi) \big] $$
possibilities for partition $\pi_S$ satisfying the above condition, any of these partitions having with a total of 
	$1+\sum_{\ell=1}^{L_S}k_\ell(S)$
internal vertex blocks.

  \begin{figure}[h]
    \begin{center}
     \includegraphics[scale=.75]{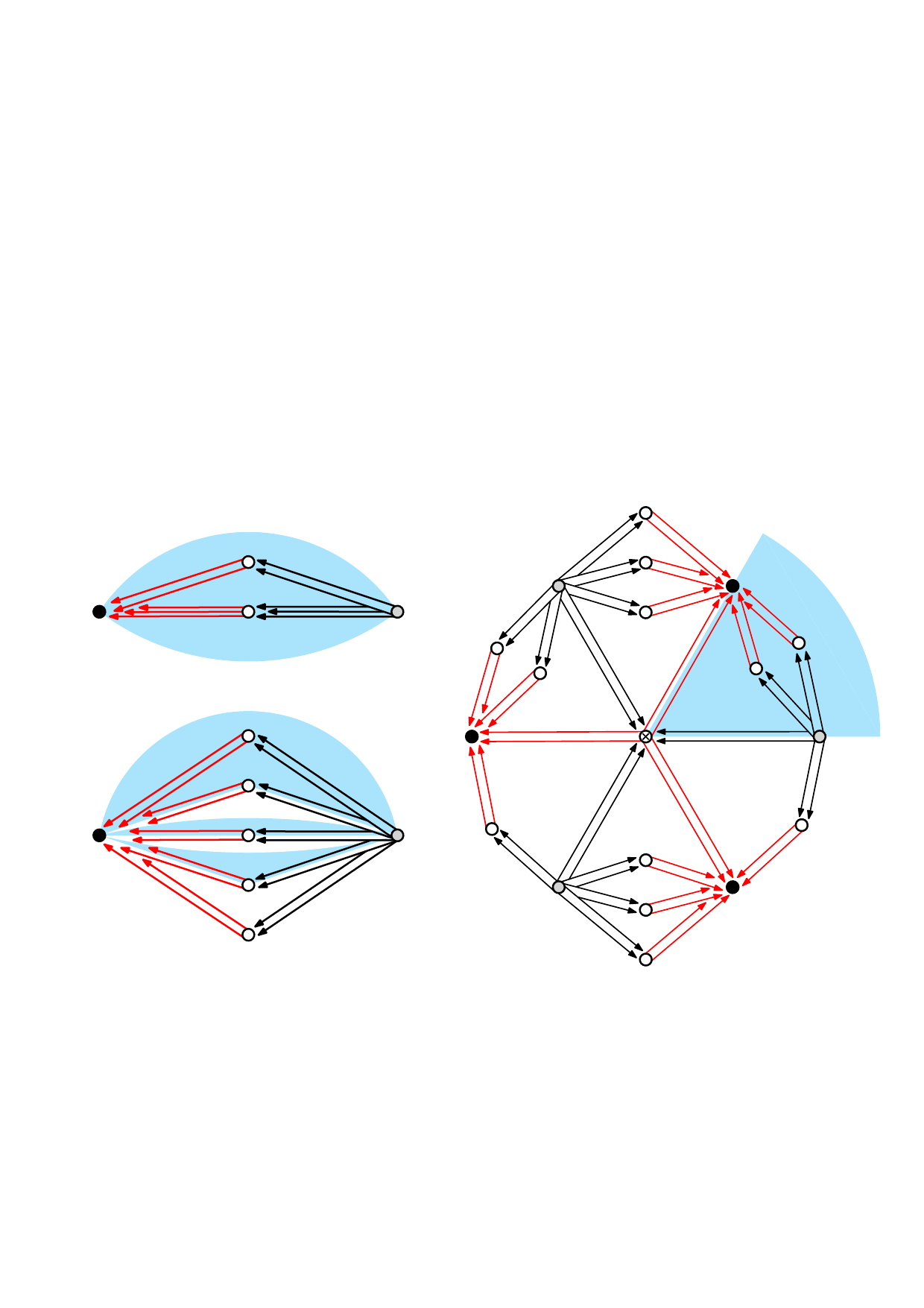}
    \end{center}
    \caption{Three types of strong components that contribute in the limiting traffics distribution: the upper leftmost figure represents the strong component associated to a cut edge of $\mcal T^{\rho0}$, the bottom leftmost stands for a length two cycle, and the rightmost one for a cycle of length 6. The light blue region represents the niche of an edge labeled by $x\mapsto x^5$.}
    \label{F02_Pad}
  \end{figure}

\subsubsection{Conclusion} 

Recall that two test graphs are \emph{isomorphic} whenever there exists a bijection between their vertex set that preserves adjacency and labels. 

For a partition $\rho_0$ of $V$ such that $T^{\rho_0}$ is a pseudo-cactus and the previous section shows that for any  $\pi$ such that $\rho(\pi) = \rho_0$, the isomorphic class of $\mcal T^{\pi}$ depends only on $\rho_0$, not on $\pi$. On the other hand, the summand in the sum over $\pi$ in  \eqref{FormulaSec4} is a function of the isomorphism class of $\mcal T^\pi$, so it is a function of $\rho_0$. In this section we write explicitly the dependance in $\rho_0$ except for the contribution of the profiles $ \delta^0\big[\mcal T^\pi(\Gamma_w, \Gamma_x)\big]$ which is considered later. 

To write $\delta^0[\mcal T^\pi(W',X')] $ in terms of $\rho_0$ recall that for each cut edge of $T^{\rho(\pi)}$, there is in $\mcal T^\pi$ a group of edges of multiplicity 3 in each variables and the other groups are all of multiplicity 2.  Denote by 
\begin{itemize}
 	\item $\mcal C_1(\rho_0)$ the set of cut edges of $T^{\rho_0}$, $ c_1(\rho_0)=|\mcal C_1(\rho_0)|$,
	\item $m_w^{(3)} = \esp[W'(1,1)^3]$ and $m_x^{(3)} = \esp[X'(1,1)^3]$ the third moments of the matrix entries.
\end{itemize}
Recall that the variables are normalized ($\esp[W'(1,1)^2] = \esp[X'(1,1)^2]=1$).
 We hence deduce that $\delta^0[\mcal T^\pi(W',X')]= 0$ if $T^{\rho_0}$ has a cut edge $S$ with label $n(S)=1$, and otherwise
	\eq
		\delta^0[\mcal T^\pi(W',X')] & := & \mathbb{E}\left[\prod_{e\in \mathcal{E}^\pi_w}\tilde W'(\varphi(e)) \times \prod_{e\in \mathcal{E}^\pi_x}\tilde X'(\varphi(e)) \right] =   \big( m_w^{(3)} m_x^{(3)}\big)^{c_1(\rho_0)}
	\qe
which is independent of the edge labels $Y_n$ of $T$ with index $n\geq 3$.

In order to write $\Psi^\pi$ as a function of $\rho_0$ note first that $|V_1^\pi| = |V_1^{\rho_0}|$ and $|V_2^\pi| = |V_2^{\rho_0}|$ by definition. Moreover, the number of internal vertices does not depend on $\pi$: indeed, denote
 \begin{itemize}
	\item $ C_2(\rho_0)$  the set of simple cycles of length 2 of $T^{\rho_0}$, $c_2(\rho_0) = |C_2(\rho_0)| $,
	\item $C_3(\rho_0)$ the set of higher length simple cycles, and $c_3(\rho_0) = |C_3(\rho_0)| $.
\end{itemize}
Sections \ref{Focus1}, \ref{Focus2} and \ref{Focus3} yield
\eq
	|V_0^\pi| & = &  \sum_{  S\in C_1(\rho_0) } k(S)  +   \sum_{  S\in C_2(\rho_0) } \big(k(S) +k'(S)  + 1\big)\\
	& &  +  \sum_{  S\in C_3(\rho_0) }  \left(1+ \sum\limits_{\ell=1}^{L_S} k_\ell(S) \right)=  c_2+c_3+\sum_{e\in E} \frac{\mbf n(e)-1}2 .
\qe
So the definition of $\Psi^\pi$ gives
\eq
\Psi^\pi   := \psi_0^{|V_0^\pi| -\sum\limits_{e\in E} \frac{\mbf n(e)-1}{2}}  \psi_2^{|V_2^\pi|}  \psi_1^{|V_1^\pi|} = \psi_0^{c_2 + c_3} \psi_1^{|V_1^{\rho_0}|}  \psi_2^{|V_2^{\rho_0}|}, 
\qe 
where $V_i^{\rho_0}$ is the set of vertices of $T^{\rho_0}$ in $V_i$. As announced, this expression is a function of $ \rho_0$. It also does not depend on the labeling.

The number of partitions $\pi$ such that $\rho(\pi)=\rho_0$ is given by the count of Subsections \ref{Focus1} to \ref{Focus3}. We recall $h_n:x\mapsto x^n$ and $\xi$ a standard real Gaussian random variable. With the above computations and \eqref{FormulaSec4}, we have finally obtained the following asymptotic formula.

\begin{lemma}\label{Lem:PreHeuristic} Let $\mbf Y$ be the family of profiled Pennington-Worah matrices defined in \eqref{Yk}. Then under our assumption,  for any reference test graph $T_{\mbf n} = (V,E,\mbf n)$, with  $\mcal T_{\mbf n} = \mcal T_{T_\mbf n}$ the associated auxiliary test graph and $\pi_0\in \mcal P(\mcal V)$ such that $\rho(\pi_0) = \rho_0$, we have
	\eqa
	 \tau_N \big[ T_{\mbf n}( \mbf Y) \big] 
		& = &  \sum_{\substack{\rho_0 \in \mathcal{P}({V}) \\ T_{\mbf n}^{\rho_0} \mrm{\ pseudo \ cactus}}} 
		 \psi_2^{|V_2^{\rho_0}|}  
		 \psi_1^{|V_1^{\rho_0}|} \delta^0\big[\mcal T_{\mbf n}^{\pi_0}(\Gamma_w, \Gamma_x)\big] 				\nonumber\\
		 & & \times  \left (  \frac{ m_w^{(3)} m_x^{(3)}}6\right)^{c_1(\rho_0)} \prod_{S\in \mcal C_1(\rho_0)}  \esp\big[   h'''_{n(S)}(\xi) \big] \nonumber\\
		 & & \times \   \psi_0 ^{c_2(\rho_0)} \prod_{S\in \mcal C_2(\rho_0)}    \esp \big[  h_{n(S)(\xi)} h_{n'(S)}(\xi) \big]    \nonumber    \\
		 & & \times \ \psi_0^{c_3(\rho_0)}\prod_{ S \in \mcal C_3(\rho_0)}   \prod_{i=1}^{L_S}  \esp\big[ h'_{n_i(S)}(\xi) \big]   +o(1).\label{Eq:PreHeuristic}
 	\qea
\end{lemma}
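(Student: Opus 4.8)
The plan is to feed the structural results of the previous subsections into the asymptotic formula \eqref{FormulaSec4}, which already expresses $\tau_N[T_{\mbf n}(\mbf Y)]$, up to $o(1)$, as a double sum: an outer sum over partitions $\rho_0$ of $V$ for which $T_{\mbf n}^{\rho_0}$ is a pseudo-cactus, and an inner sum over split partitions $\pi$ of $\mcal V$ with $\rho(\pi)=\rho_0$, weighted by $\Psi^\pi\,\delta^0[\mcal T_{\mbf n}^\pi(\Gamma_w,\Gamma_x)]\,\delta^0[\mcal T_{\mbf n}^\pi(W',X')]$. I would fix $\rho_0$, rewrite the inner sum as (number of contributing $\pi$) times (common weight), and then expand the count as a product over strong components.

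First I would invoke Proposition \ref{Prop:Support} and the remark following it: for every $\pi$ with $\rho(\pi)=\rho_0$ the auxiliary graph $\mcal T_{\mbf n}^\pi$ has an isomorphism class depending only on $\rho_0$, so the three weights $\Psi^\pi$, $\delta^0[\mcal T_{\mbf n}^\pi(\Gamma_w,\Gamma_x)]$ and $\delta^0[\mcal T_{\mbf n}^\pi(W',X')]$ do not depend on the particular $\pi$ and may be evaluated at any fixed representative $\pi_0$ with $\rho(\pi_0)=\rho_0$ — this is what licenses writing the profile term as $\delta^0[\mcal T_{\mbf n}^{\pi_0}(\Gamma_w,\Gamma_x)]$ in the statement. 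Since, by the same proposition, $\pi$ identifies no internal vertices across distinct strong components, the data of $\pi$ is equivalent to the tuple $(\pi_S)_{S\in\mcal{SC}(T_{\mbf n}^{\rho_0})}$ of its restrictions to the niches of each strong component, whence the number of contributing $\pi$ factorizes as the product over $S$ of the local counts of Sections \ref{Focus1}, \ref{Focus2}, \ref{Focus3}.

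Next I would evaluate the three pieces explicitly. For the $W',X'$ weight, Lemma \ref{MultiplicityInW} gives that in $\mcal T_{\mbf n}^\pi$ all groups of edges labelled $w$ (resp.\ $x$) have multiplicity $2$ except for one group of multiplicity $3$ in each niche attached to a cut edge of $T_{\mbf n}^{\rho_0}$; with the centring, independence and normalisation $\esp[W'(1,1)^2]=\esp[X'(1,1)^2]=1$ this yields $\delta^0[\mcal T_{\mbf n}^\pi(W',X')]=(m_w^{(3)}m_x^{(3)})^{c_1(\rho_0)}$, vanishing exactly when some cut edge carries the label $1$ (consistently with $\esp[h_1'''(\xi)]=0$). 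For $\Psi^\pi$, one has $|V_1^\pi|=|V_1^{\rho_0}|$ and $|V_2^\pi|=|V_2^{\rho_0}|$ by construction, while summing the internal-block counts of Sections \ref{Focus1}--\ref{Focus3} gives $|V_0^\pi|=c_2(\rho_0)+c_3(\rho_0)+\sum_{e\in E}\tfrac{\mbf n(e)-1}{2}$, so the definition \eqref{DefPsi} collapses to $\Psi^\pi=\psi_0^{c_2(\rho_0)+c_3(\rho_0)}\psi_1^{|V_1^{\rho_0}|}\psi_2^{|V_2^{\rho_0}|}$. Finally the Gaussian identities $\binom{n}{3}|\mcal P_2(n-3)|=\tfrac16\esp[h_n'''(\xi)]$, $|\mcal P_2(n+n')|=\esp[h_n(\xi)h_{n'}(\xi)]$ and $n\,|\mcal P_2(n-1)|=\esp[h_n'(\xi)]$ turn the cut-edge, length-two and higher-length local counts into $\tfrac16\esp[h_{n(S)}'''(\xi)]$, $\esp[h_{n(S)}(\xi)h_{n'(S)}(\xi)]$ and $\prod_i\esp[h_{n_i(S)}'(\xi)]$ respectively; collecting these and redistributing the powers of $\psi_0$ over the cut edges and cycles produces \eqref{Eq:PreHeuristic}.

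The genuinely hard content — that only pseudo-cactus quotients contribute and that the contributing partitions respect the strong-component decomposition — is already Proposition \ref{Prop:Support}, so what remains here is organisational. The step I would be most careful about is the factorisation of the count, namely that $\pi\mapsto(\pi_S)_S$ is a bijection between contributing partitions and tuples of admissible local choices: one must check that strong components sharing a cut vertex cause no over-counting (harmless, since cut vertices lie in $V$ and are already fixed by $\rho_0$) and that every admissible tuple of local choices reassembles to a $\pi$ with $\eta(\pi)=0$ — the converse direction underlying the enumerations of Sections \ref{Focus1}--\ref{Focus3}. The remaining work is the term-by-term matching of the exponents of $\psi_0,\psi_1,\psi_2$ and of the third-moment factors against \eqref{Eq:PreHeuristic}.
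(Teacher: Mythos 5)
Your proposal is correct and follows essentially the same route as the paper: it starts from \eqref{FormulaSec4}, uses Proposition \ref{Prop:Support} to make the weights functions of $\rho_0$ alone and to factorize the count of contributing $\pi$ over strong components, evaluates $\delta^0[\mcal T^\pi(W',X')]=(m_w^{(3)}m_x^{(3)})^{c_1(\rho_0)}$ and $\Psi^\pi=\psi_0^{c_2+c_3}\psi_1^{|V_1^{\rho_0}|}\psi_2^{|V_2^{\rho_0}|}$ exactly as in the paper's concluding subsection, and converts the local counts of Sections \ref{Focus1}--\ref{Focus3} via the same Gaussian moment identities. The point you flag about the bijection $\pi\mapsto(\pi_S)_S$ is also left implicit in the paper, so your treatment is, if anything, slightly more explicit on that step.
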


\section{Construction of the asymptotic equivalent}

In this section, we analysis the expression \eqref{Eq:PreHeuristic}, using Proposition \ref{Th:UI} and  technics from traffic probability \cite{Male2020} in order to  construct three explicit families of matrices $\mbf Y^{\mrm{lin}}$, $\mbf Y^{\mrm{per}}$ and $\mbf B$ indexed by $\mbb C[y]$ such that, for the collections restricted to odd polynomials, $\mbf Y$ has the same limiting traffic distribution as 
	$$\mbf Y^{\mrm{lin}}+ \mbf Y^{\mrm{per}}  + \mbf B := \big( Y^{\mrm{lin}}(h) + Y^{\mrm{per}}(h)+B(h) \big)_{h \in \mbb C[y]}.$$
Moreover, each matrix is a linear function of its argument $h\in \mbb C[y]$. The construction of each collection comes from the analysis the contributions of each type of strong component in  \eqref{Eq:PreHeuristic}.

The limiting traffic distribution of  $\mbf B$ has the same expression as \eqref{Eq:PreHeuristic} if we set to zero the contributions that are not associated to the cut-edge set $\mcal C_1(\rho_0)$, see Lemma \ref{Lem:DistB}. The matrices of $\mbf B$ are deterministic and their entries are of order $N^{-1}$ (they are called of Boolean type in \cite{Male2020}).

The construction of the collection $\mbf Y^{\mrm{lin}}$ is motivated by the contribution from $\mcal C_3(\rho_0)$ in \eqref{Eq:PreHeuristic}. The matrices of $\mbf Y^{\mrm{lin}}$ are obtained by applying profiles to the matrix $Y_1 = \frac{WX}{\sqrt{N}\sqrt{N}}$. Comparing with Proposition \ref{Th:UI}, it may be useful to recall that the product of two independent Ginibre matrices converges toward a non commutative random variables whose free cumulants are constant (they do not depend on the order of the cumulant), called a free Poisson variable. By Shlyakhtenko \cite{SHL}, since we can expect that a good notion of \emph{free Poisson variable over the diagonal} holds to describe the asymptotic of $\frac{WX}{\sqrt{N}\sqrt{N}}$ in canonical terms.

Therefore the linear matrix $\mbf Y^{\mrm{lin}}$ comes also with contributions for length 2 cycle that we must subtract from the length 2 cycle contribution \eqref{Eq:PreHeuristic} in order to surmise the perturbation family $ \mbf Y^{\mrm{per}}$. Recall that for a collection of non commutative random variables, all free cumulants of order greater than 2 vanish if and only if the collection is circular or semi-circular, which are the limit GOE and Ginibre matrix ensembles. Shlyakhtenko proves in \cite{SHL} the analogue for variance profiled matrices. The consequence is that to construct $ \mbf Y^{\mrm{per}}$ it suffices to understand a covariance structure. This is in particular the moment where we switching from reference test graphs to test graphs labeled by the Hermite polynomials.

Each case relies a same lemma stated in the following subsection.

\subsection{A property of the function $\delta^0$}

In Section \ref{PWModel}, after the definition of the auxiliary graph $\mcal T_T$, we use the \emph{substitution property} while replacing the edge of a test graph by graph operations. While this property is obvious for the evaluation of the combinatorial traces, it is not longer true for the injective trace. The lemma below shows that the substitution property can be applied for the map $\delta^0$ evaluated in bounded matrices. We restrict our statement to the situation we meet later on.

\begin{lemma}\label{ProfileTrick} $\mbf A=(A_\omega)_{\omega \in \Omega}$ a family of $\mbf s$-rectangular random matrices with bounded entries and let $\mathfrak T = (\mathfrak V, \mathfrak E, \gamma)$ be a test graph. Assume that $\mathfrak T$ has an edge $e_0 \in V_2 \times V_1$ associated to a matrix of the form $A_{\gamma(e_0)} = g_{\sigma}(\mbf A)$, where $g = (\mathfrak S, \mrm{in}, \mrm{out} )$. Assume that $\mrm{in}$ is the only vertex of $\mathfrak S$ in $V_2$, and  $\mrm{out}$ is the only vertex of $\mathfrak S$ in $V_1$. Let us denote $\mcal T$ the test graph obtained from $\mathfrak T$ by replacing the edge $e_0$ by the graph $\mathfrak S$, identifying the input or $g$ with the source of $e_0$, as well as the output of $g$ with the source of $e_0$. Then, denoting $R = N^{-v_0}g_{\mathfrak S}(\mbf A)$ where $v_0$ is the number of internal vertices of $\mathfrak S$,  we have as $N$ goes to infinity
$$ \delta^0\big[ \mathfrak T(\mbf A) \big] =  \delta^0\big[ \mathfrak T'(\mbf A, R) \big]+o(1).$$
\end{lemma}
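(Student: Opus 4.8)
The statement asserts that the $\delta^0$ functional — the expectation over a uniformly random injective labelling — is insensitive (up to $o(1)$) to whether we view a distinguished edge $e_0$ as a single matrix $A_{\gamma(e_0)} = g_{\sigma}(\mathbf A)$ or unpack it into the subgraph $\mathfrak S$, provided we renormalize the unpacked copy by $N^{-v_0}$ where $v_0$ counts the internal vertices of $\mathfrak S$. The plan is to expand the entry of $A_{\gamma(e_0)}$ using the defining formula \eqref{Eq:EvalGraphPolyn} for $g_{\sigma}(\mathbf A)$ as a sum over maps from the vertex set of $\mathfrak S$ into $[N]$, substitute this into the definition of $\delta^0\big[\mathfrak T(\mathbf A)\big]$, and then compare the resulting sum against $\delta^0\big[\mathfrak T'(\mathbf A, R)\big]$, which by definition of $R$ is a sum over \emph{injective} labellings of the full combined graph. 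The discrepancy is precisely the difference between ``all labellings of the internal vertices of $\mathfrak S$'' (which appears implicitly inside $g_{\sigma}(\mathbf A)(\cdot,\cdot)$) and ``labellings that remain injective on the combined graph''.

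**Key steps.** First I would write, for $\Phi\colon\mathfrak V\to[N]$ the random injective split map,
\[
\delta^0\big[\mathfrak T(\mathbf A)\big] = \esp\Big[\prod_{e\in\mathfrak E,\, e\neq e_0}\tilde A_{\gamma(e)}\big(\Phi(e)\big)\cdot g_{\sigma}(\mathbf A)\big(\Phi(e_0)\big)\Big],
\]
and then expand $g_{\sigma}(\mathbf A)\big(\Phi(\mathrm{out}_{e_0}),\Phi(\mathrm{in}_{e_0})\big)$ as $\sum_{\varphi}\prod_{f\in\mathfrak S}\tilde A_{\gamma(f)}(\varphi(f))$ over all maps $\varphi$ on $V(\mathfrak S)$ agreeing with $\Phi$ on $\{\mathrm{in},\mathrm{out}\}$. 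On the other side, $\delta^0\big[\mathfrak T'(\mathbf A,R)\big]$ is (since $R = N^{-v_0}g_{\mathfrak S}(\mathbf A)$ sits on the edge, or rather $\mathfrak T'$ has $\mathfrak S$ spliced in) an expectation over a uniformly random injective split map $\Phi'$ on $\mathfrak V\cup V(\mathfrak S)_{\mathrm{int}}$, which amounts to choosing $\Phi$ injective on $\mathfrak V$ and then $\varphi$ injective on the internal vertices and disjoint from $\mathrm{range}(\Phi)$, each such extension weighted by $1/(N)_{v_0'}$ for the appropriate falling factorial — and the factor $N^{-v_0}$ in $R$ together with the sum over \emph{all} internal labellings reproduces $g_{\mathfrak S}$. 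The second step is thus to note that $N^{-v_0}\sum_{\varphi}(\cdots) = \frac{(N)_{v_0}}{N^{v_0}}\cdot\esp_{\varphi\text{ inj.}}(\cdots) + N^{-v_0}\sum_{\varphi\text{ not inj. on int.}}(\cdots)$, and that $\frac{(N)_{v_0}}{N^{v_0}}\to 1$. The third step is to bound the non-injective contribution: since the entries of all $A_\omega$ are bounded by some constant $M$ uniformly in $N$, each summand is $O(1)$ in absolute value, the outer $\mathfrak T$-part is $O(1)$, and the number of non-injective maps $\varphi$ on $v_0$ internal vertices into $[N]$ (that collide with each other or with $\mathrm{range}(\Phi)$) is $O(N^{v_0-1})$; hence after the $N^{-v_0}$ normalization this piece is $O(N^{-1}) = o(1)$. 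Finally one must check the bookkeeping that ``injective $\varphi$ on internal vertices, disjoint from $\mathrm{range}(\Phi)$, together with $\Phi$ injective on $\mathfrak V$'' is exactly the law of a uniform injective split map on the combined vertex set, up to the same $(N)_\bullet/N^\bullet \to 1$ normalization; the hypothesis that $\mathrm{in}$ is the unique $V_2$-vertex and $\mathrm{out}$ the unique $V_1$-vertex of $\mathfrak S$ is what guarantees the splicing respects the split structure and that the new internal vertices live in $V_0$, so no edge of $\mathfrak S$ is forced to degenerate.

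**Main obstacle.** The routine calculation is harmless; the delicate point is the third step — controlling the non-injective contribution uniformly — which is where boundedness of the entries is essential (note the lemma is stated only for bounded matrices, and this is exactly why: for growing entries the $O(N^{v_0-1})$ count of bad labellings could be amplified). I would be careful to phrase the counting so that collisions \emph{among} internal vertices and collisions \emph{between} an internal vertex and a reference vertex are both subsumed in the $O(N^{v_0-1})$ estimate, and to confirm that the product structure $\prod_{f}\tilde A_{\gamma(f)}(\varphi(f))$ genuinely stays bounded on these degenerate configurations (it does, since each factor is an entry of a bounded matrix). A secondary bit of care is that $\mathbf A$ is \emph{random}, so all the above manipulations happen inside the outer expectation $\esp$; but since every bound is deterministic and uniform, dominated convergence (or simply the triangle inequality inside $\esp$) lets the $o(1)$ pass through the expectation without difficulty. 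The remaining verification — that the isomorphism class of the spliced graph $\mathfrak T'$ matches what is claimed, identifying $\mathrm{in}$ with the source and $\mathrm{out}$ with the target of $e_0$ (the statement's ``source of $e_0$'' appears twice, evidently a typo for source and target) — is purely combinatorial.
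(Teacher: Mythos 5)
Your proposal is correct and follows essentially the same route as the paper: expand the graph-monomial entry of the edge label as a sum over labelings of the internal vertices, compare unrestricted with injective labelings (the colliding labelings being $O(N^{v_0-1})$ in number and uniformly bounded thanks to the boundedness of the entries, hence $o(1)$ after the $N^{-v_0}$ normalization), and let the falling-factorial-to-power ratios tend to one, all inside the outer expectation. The only difference is bookkeeping order — the paper first relaxes injectivity on the $V_0$-vertices of $\mathfrak T$ and restores it on the spliced graph, whereas you keep $\Phi$ injective on $\mathfrak V$ and only treat collisions involving the new internal vertices — which is immaterial.
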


\begin{proof} Recall that for $\Phi$ a uniform split injection $\mathfrak V\to [N]$  distributed independently of $\mbf A$, denoting $\Phi(e) := \big(\Phi(w), \Phi(v) \big)$ when $e=(v,w)$, we have
	\eq
		 \delta^0\big[\mathfrak T (\mbf A)\big]  :=    \esp\Big[ \prod_{e\in \mathfrak E} A_{\gamma(e)}\big( \Phi(e) \big) \Big]. 
	\qe
Setting $\mathfrak V =\mathfrak V_0\sqcup \mathfrak V_1\sqcup \mathfrak V_2$, the boundedness of the matrix entries implies
\eqa
	\lefteqn{ \delta^0\big[\mathfrak T (\mbf A)\big]  } \nonumber\\
	 & = & \frac 1 { N_1^{|\mathfrak V_1|} N_2^{ |\mathfrak V_2| } } \sum_{ \substack{ \phi_1:\mathfrak V_1\to [N_1] \\ \mrm{injective}}}  \sum_{ \substack{ \phi_2:\mathfrak V_2\to [N_2] \\ \mrm{injective}}}\label{ExplainDeltaZero} \\
	 & & \Bigg(  \frac 1 {N_0^{|\mathfrak V_0| } }\sum_{\phi_0 : \mathfrak V_0 \to [N_0]} \prod_{e\in \mathfrak  E} \upiota_{(1, 2)}\big(A_{\gamma(e)}\big)\big( \phi_{0,1,2}(e) \big)\Bigg)  +o(1)\nonumber
\qea
where is the last formula $\phi_{0,1,2}$ coincide with $\phi_i$ on $V^{\rho_0}_i$ for each $i=0,1,2$. 
Recall that $ \upiota_{(1, 2)}$ denotes the canonical injection of $\mrm M_{N_1,N_2}(\mbb R) \to \mrm M_{N,N}(\mbb R)$. By assumption, denoting $\mathfrak S= (V_{\mathfrak S}, E_{\mathfrak S}, \gamma_{\mathfrak S})$, we have
$$\upiota_{(1, 2)}\big(A_{\gamma(e)}\big)\big( \phi_{0,1,2}(e_0) \big) = \frac 1 {N^{v_0}} \sum_{\phi'_0 : \mathfrak V_{\mathfrak S} \to [N_0]} \prod_{e\in \mathfrak  E_{\mathfrak S}} \upiota_{(1, 2)}\big(A_{\gamma_{\mathfrak S}(e)}\big)\big( \phi'_{0,1,2}(e) \big),$$
where $ \phi'_{0,1,2}$ is defined as $ \phi_{0,1,2}$ with $\phi'$ instead of $\phi$.  We therefore can write our expression in terms of $\mcal T$. Denoting $\mcal V = \mathfrak V_1 \sqcup \mathfrak V_2 \sqcup \mcal V_0$ its vertex set, $\mcal E$ its edge set, and keeping the notation $\gamma$ for the label map,
\eq
	\lefteqn{\frac 1 {N_0^{|\mathfrak V_0| } }\sum_{\phi_0 : \mathfrak V_0 \to [N_0]} \prod_{e\in \mathfrak  E} \upiota_{(1, 2)}\big(A_{\gamma(e)}\big)\big( \phi_{0,1,2}(e) \big)}\\
	& = & \frac 1 {N_0^{| \mcal V_0| } }\sum_{\phi_0 : \mcal  V_0 \to [N_0]} \prod_{e\in \mathfrak  E} \upiota_{(1, 2)}\big(A_{\gamma(e)} \big)\big( \phi_{0,1,2}(e) \big)
\qe
By the same reasoning as for showing \eqref{ExplainDeltaZero} in the reverse sense, we get that the latter expression equals $  \delta^0\big[ \mcal T(\mbf A, R) \big]$ up to a negligible error resulting from the  replacement of the  map on $\mathcal V_0$ by an injective map.
\end{proof}

Our strategy is to apply the above lemma in each edge of $T^{\rho_0}$ in  Lemma \ref{Lem:PreHeuristic} reducing the complicated structure of profiled PW matrices to simpler matrix ensembles. 

\subsection{The Boolean type deterministic deformation}

In the context of Lemma \ref{Lem:PreHeuristic}, let $e\in T^{\rho_0}$ be a cut-edge with label $\mbf n(e)\geq 3$. Section \ref{Focus1} describes the subgraph $\mathfrak S = \mathfrak S(e)$ of $\mcal T^{\pi_0}$ induced by the niche of $e$. Denote $g_{\mathfrak S}$ the graph monomial whose test graph is $\mathfrak S$ and whose input and ouput are the vertices of this graph in $V_2$ and $V_1$ respectively. The graph $\mathfrak S$ has $v_0=1 + \frac {\mbf n(e)-3}2$ internal vertices, corresponding to the $\frac {\mbf n(e)-3}2$ internal vertex pairing  plus one group of 3 vertices. The corresponding identification for the edges implies that the following expression holds
		\eqa
		\lefteqn{N^{-v_0} g_{\mathfrak S}(\Gamma_w, \Gamma_x) }\nonumber\\
		& = &  \bigg[ \Big( \frac 1 N  \sum_{\ell=1}^{N_0} \Gamma_w(i,\ell)^3 \Gamma_x(\ell,j)^3 \Big) \times \Big(  \frac 1 N  \sum_{\ell=1}^{N_0} \Gamma_w(i,\ell)^2 \Gamma_x(\ell,j)^2 \Big)^{\frac{\mbf n(e)-3}2} \bigg]_{i,j}\nonumber\\
		& =& \Lambda_3 \circ \big(  \sqrt[\circ]{ \Lambda_2}^{ \circ  (\mbf n(e)-3) }\big).\label{Eq:EntryCutEdge}
		\qea
where we have set
\eqa\label{Eq:Lambda}
		\Lambda_\ell:= N^{-1} \Gamma_w^{\circ \ell} \times \Gamma_x^{\circ \ell }, \quad \ell=1,2,
\qea
and used the concise notations $A^{\circ n} = (A_{p,q}^n)_{p,q}$ and $ \sqrt[\circ]{ A} = (\sqrt{A_{p,q}})_{p,q}$.  Note that with $h_n:x\mapsto x^n$, for any matrix $A$ we have $h_n'''[\{ xA \}]=h'''_n(x) A^{\circ (n-3)} $. 
 We hence propose to introduce the following collection of matrices.

\begin{lemma}\label{Lem:DistB} Let $\mbf B = \big( B(h) \big)_{h\in \mbb C[y]}$ be the collection of $N_1\times N_2$ deterministic matrices defined as follows: for $\xi$ a standard real Gaussian random variable, 
	\eq
		B(h) & := &   
		 \frac {m_w^{(3)} m_x^{(3)}}{6N}  \Lambda_3\circ  \esp\bigg[  h^{'''}\Big[ \Big\{ \xi \sqrt[\circ]{ \Lambda_2}  \Big\} \Big] \bigg]
	\qe
Then $\mbf B $ converges in traffic distribution and  for any reference graph $T_{\mbf n}=(V,E,\mbf n)$, with same notations as \eqref{Eq:PreHeuristic},   we have 
	 \eq
	 	\tau_N \big[ T_{\mbf n}( \mbf B) \big]  \nonumber
		& = &  \sum_{\substack{\rho_0 \in \mathcal{P}({V}) \\ T_{\mbf n}^{\rho_0} \mrm{\ tree}}} 
		  \psi_2^{|V_2^{\rho_0}|}  
		 \psi_1^{|V_1^{\rho_0}|} \delta^0\big[\mcal T_{\mbf n}^{\pi_0}(\Gamma_w, \Gamma_x)\big]\nonumber\\
		 & & \times  \left (  \frac{ m_w^{(3)} m_x^{(3)}}6\right)^{|E|} \prod_{e\in E}  \esp\big[   h'''_{n(e)}(\xi) \big].\nonumber
\qe
\end{lemma}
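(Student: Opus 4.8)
The plan is to replay, for the deterministic family $\mbf B$, the analysis behind Lemma \ref{Lem:PreHeuristic}; it collapses here because no edge identifications are ever forced for $\mbf B$ (there is no centered randomness to cancel), so the only combinatorial freedom is the choice of a partition $\rho_0$ of the reference vertices, and a one-line power count confines the surviving terms to those $\rho_0$ for which $T_{\mbf n}^{\rho_0}$ is a tree. Since $h \mapsto B(h)$ is linear and the monomials $h_n$ span $\mbb C[y]$, I would fix a connected reference graph $T_{\mbf n} = (V,E,\mbf n)$ and first unwind the definition of $B$: one has $h_n'''[\{\xi\sqrt[\circ]{\Lambda_2}\}] = h_n'''(\xi)\,\sqrt[\circ]{\Lambda_2}^{\circ(n-3)}$, hence $B(h_n) = \frac{m_w^{(3)}m_x^{(3)}}{6N}\,\esp[h_n'''(\xi)]\,R_n$ with $R_n := \Lambda_3 \circ \sqrt[\circ]{\Lambda_2}^{\circ(n-3)}$ (this vanishes unless $n \geq 3$ is odd). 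By \eqref{Eq:EntryCutEdge}, $R_n = N^{-v_0(n)}g_{\mathfrak S_n}(\Gamma_w,\Gamma_x)$, where $\mathfrak S_n$ is the cut-edge niche graph of Section \ref{Focus1} (a group of $3$ internal vertices and $\frac{n-3}{2}$ pairs, so $v_0(n) = \frac{n-1}{2}$ internal vertices, all in $V_0$), and $g_{\mathfrak S_n}$ has its input (the unique $V_2$-vertex) and output (the unique $V_1$-vertex) at the source and target of the edge, its remaining vertices lying in $V_0$; the entries of $R_n$ are bounded since those of $\Gamma_w,\Gamma_x$ are.

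Next I would decompose $\tau_N[T_{\mbf n}(\mbf B)] = \frac1N \Tr[T_{\mbf n}(\mbf B)] = \frac1N \sum_{\rho_0 \in \mcal P(V)} \Tr^0[T_{\mbf n}^{\rho_0}(\mbf B)]$ over the (split) partitions $\rho_0$ of the reference vertex set, and use $\Tr^0[T_{\mbf n}^{\rho_0}(\mbf B)] = (N_1)_{|V_1^{\rho_0}|}(N_2)_{|V_2^{\rho_0}|}\,\delta^0[T_{\mbf n}^{\rho_0}(\mbf B)]$ as in \eqref{DeltaTrickZero} (the reference graph has no vertex in $V_0$). Factoring out the scalars, $\delta^0[T_{\mbf n}^{\rho_0}(\mbf B)] = \big(\prod_{e\in E}\frac{m_w^{(3)}m_x^{(3)}}{6N}\esp[h_{\mbf n(e)}'''(\xi)]\big)\,\delta^0[T_{\mbf n}^{\rho_0}(R_{\mbf n(e)})_{e\in E}]$, and applying Lemma \ref{ProfileTrick} once for each edge --- its hypotheses are met because each $\mathfrak S_n$ has a unique vertex in $V_1$ and a unique vertex in $V_2$ --- replaces each edge by its niche, yielding $\delta^0[T_{\mbf n}^{\rho_0}(R_{\mbf n(e)})_e] = \delta^0[\mcal T_{\mbf n}^{\pi_0}(\Gamma_w,\Gamma_x)] + o(1)$, where $\pi_0 \in \mcal P(\mcal V)$ is the partition with $\rho(\pi_0) = \rho_0$ given by the cut-edge niche structure: the quotient of the auxiliary graph $\mcal T_{\mbf n}$ of Definition \ref{TrondGraph} in which the reference vertices are identified along $\rho_0$ and, inside each niche, the $\mbf n(e)$ internal vertices are grouped into one triple and pairs with no extra-niche identification. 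This is exactly the $\pi_0$ occurring in Lemma \ref{Lem:PreHeuristic}.

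Collecting the powers of $N$ through $(N_i)_k = N^k\psi_i^k(1+o(1))$, the contribution of $\rho_0$ becomes
\[
 N^{\,|V^{\rho_0}|-1-|E|}\,\psi_1^{|V_1^{\rho_0}|}\psi_2^{|V_2^{\rho_0}|}\Big(\frac{m_w^{(3)}m_x^{(3)}}{6}\Big)^{|E|}\prod_{e\in E}\esp[h_{\mbf n(e)}'''(\xi)]\;\delta^0[\mcal T_{\mbf n}^{\pi_0}(\Gamma_w,\Gamma_x)]\,(1+o(1)),
\]
with $|V^{\rho_0}| = |V_1^{\rho_0}| + |V_2^{\rho_0}|$ since $\rho_0$ is split. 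Because $T_{\mbf n}^{\rho_0}$ is connected and carries all $|E|$ of its edges, Lemma \ref{eq_graph} gives $|V^{\rho_0}| - 1 - |E| \leq 0$, with equality exactly when $T_{\mbf n}^{\rho_0}$ is a tree; so every non-tree partition contributes $o(1)$, while on the tree partitions each $\delta^0$ converges by the graphon convergence built into Hypothesis \ref{ProfiledModel} (Definition \ref{GraphonsDistrCarre}). Summing over the tree partitions then gives simultaneously the convergence of $\mbf B$ in traffic distribution and the stated formula. The only step demanding real care is the iterated application of Lemma \ref{ProfileTrick} together with the identification of the fully expanded graph as $\mcal T_{\mbf n}^{\pi_0}$; the power count --- which was the heart of the proof of Lemma \ref{Lem:PreHeuristic} --- is here immediate, precisely because $\mbf B$ is deterministic, so the pseudo-cactus dichotomy degenerates to ``tree or nothing''.
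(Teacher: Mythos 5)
Your proposal is correct and follows essentially the same route as the paper's proof: expand $B(h_{\mbf n(e)})$ on monomials as a scalar times $\Lambda_3\circ\sqrt[\circ]{\Lambda_2}^{\circ(\mbf n(e)-3)}$, pass to the injective trace over partitions $\rho_0$ of the reference vertices, apply Lemma \ref{ProfileTrick} edge by edge to recover $\delta^0\big[\mcal T_{\mbf n}^{\pi_0}(\Gamma_w,\Gamma_x)\big]$, and conclude with the exponent $\eta_B(\rho_0)=|V^{\rho_0}|-1-|E|$ and Lemma \ref{eq_graph}, which keeps exactly the tree quotients. The only (harmless) additions are your explicit remark that $B(h_n)$ vanishes unless $n\geq 3$ is odd and the appeal to graphon convergence for the limit of the $\delta^0$ factors, which the paper leaves implicit.
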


Note that since all the edges of a tree are cut edges, then $C_1(\rho_0) = E$ and the above expression coincides with \eqref{Eq:PreHeuristic} when $T^{\rho_0}$ is a tree. 

\begin{proof} For any edge $e\in E$ and any split injective map $\phi:V\to[N]$, 
	\eq
	\lefteqn{ \upiota_{(1, 2)}\big( B( h_{\mbf n(e)} ) \big)\big( \phi(e) \big) }\\
	& = & \frac{m_w^{(3)} m_x^{(3)}}{ 6 N} \esp\big[  h_{\mbf n(e)}'''(\xi) \big] \upiota_{(1, 2)}\Big(  \Lambda_3 \circ  \sqrt[\circ]{ \Lambda_2}^{ \circ  (\mbf n(e)-3) } \Big)\big( \phi(e) \big).
	\qe
Therefore, writing $\tau_N$ in terms of the injective distribution and using the definitions, we have 
 \eq
	 	\tau_N \big[ T_{\mbf n}( \mbf B) \big] & = & \frac 1 N \sum_{\rho_0 \in \mcal P(V)}  \sum_{ \substack{ \phi:V\to[N]  \\ \mrm{injective} }}  \prod_{e\in E} \upiota_{(1, 2)}\big(  B( h_{\mbf n(e)} ) \big)\big( \phi(e) \big) \\
		& = &   \frac 1 N \left( \frac{m_w^{(3)} m_x^{(3)}}{ 6N}\right)^{|E|}  \prod_{e\in E}  \esp\big[  h_{\mbf n(e)}'''(\xi) \big] \\
		& & \times \sum_{\rho_0 \in \mcal P(V)} \sum_{ \substack{ \phi:V\to[N]  \\ \mrm{injective} }}  \prod_{e\in E} \upiota_{(1, 2)}\Big(  \Lambda_3 \circ  \sqrt[\circ]{ \Lambda_2}^{ \circ  (\mbf n(e)-3) } \Big)\big( \phi(e) \big)
\qe
We can hence substitute the subgraphs  $\mathfrak S(e)$ defined in the beginning of this section and  apply Lemma \ref{ProfileTrick} for each vertex $e$ of $T^{\rho_0}$, getting
\eq
		\lefteqn{ \sum_{ \substack{ \phi:V\to[N]  \\ \mrm{injective} }}  \prod_{e\in E} \upiota_{(1, 2)}\Big(  \Lambda_3 \circ  \sqrt[\circ]{ \Lambda_2}^{ \circ  (\mbf n(e)-3) } \Big)\big( \phi(e) \big) }\\
		& = &   \big(N_{1} \big)_{|V_1^{\rho_0}|}  \big(N_{2} \big)_{|V_2^{\rho_0}|}  \esp\bigg[   \prod_{e\in E} \upiota_{(1, 2)}\Big( N^{-v_0} g_{\mathfrak S(e)}(\Gamma_w, \Gamma_x) \Big)\big( \Phi(e) \big) \bigg]\\
		&  = &   N^{|V_1^{\rho_0}| + |V_2^{\rho_0}|} \psi_1^{|V_1^{\rho_0}|}  \psi_2^{|V_2^{\rho_0}|} \big(1+o(1)\big)   \delta^0\big[\mcal T_{\mbf n}^{\pi_0}(\Gamma_w, \Gamma_x)\big],
\qe
We hence have
 \eq
	 	\tau_N \big[ T_{\mbf n}( \mbf B) \big] & = &\left( \frac{m_w^{(3)} m_x^{(3)}}{ 6 }\right)^{|E|}  \prod_{e\in E}  \esp\big[  h_{\mbf n(e)}'''(\xi) \big] \\
		& &\times \sum_{\rho_0 \in \mcal P(V)}  N^{\eta_B(\rho_0)} \psi_1^{|V_1^{\rho_0}|}  \psi_2^{|V_2^{\rho_0}|}  \delta^0\big[\mcal T_{\mbf n}^{\pi_0}(\Gamma_w, \Gamma_x)\big]\big( 1 + o(1) \big).
\qe
where $\eta_B(\rho_0)=-1-|E|+|V^{\rho_0}|$. By Lemma \ref{eq_graph}, $\eta_B(\pi)\leq0$ with equality if $T^{\rho_0}$ is a tree. We hence get the expected asymptotic expression.

\end{proof}

\subsection{The linear collection $\mbf Y^{\mrm{lin}}$}\label{Sec:Ylin}

We follow the same strategy as in the previous section, with the difference that we want to compare our expression with the expression of the linear model. In the context of Lemma \ref{Lem:PreHeuristic}, let $e$ be an edge of $ T^{\rho_0}$ contained in a simple cycle of length greater than 2. Section \ref{Focus1} describes the subgraph of $\mcal T^{\pi_0}$ induced by the cycle, so in particular the subgraph induced by the niche of $e$. If $\mbf n(e)=1$, the niche of $e$ consists in two edges forming the extra-niche identification. Assume that $\mbf n(e)$ is greater than one. We denote by $\mathfrak S(e)$ the subgraph generated by the other edges, that form intra-niche identifications.
 
The graph $\mathfrak S(e)$ has one vertex $\mrm{out}$ in $V_1$, on vertex $\mrm{in}$ in $V_2$,  $v_0=\frac {\mbf n(e)-1}2$ internal vertices, an edge of multiplicity two labeled $w$ between each internal vertex and $\mrm{out}$, and an edge of multiplicity two labeled $x$ between the input and $\mrm{in}$. Comparing with the context of Lemma \ref{ProfileTrick}, note that the graph monomial $g_{\mbf n(e)} = \big( \mathfrak S(e), \mrm{in} , \mrm{out}\big) $ satisfies 
	\eqa\label{Eq:GraphProofLinear}
	N^{-v_0}g_{\mbf n(e)}(\Gamma_w, \Gamma_x) = \Big( \sqrt[\circ]{ \Lambda_2} \Big)^{\circ(\mbf n(e)-1)},
	\qea
where we recall that $\Lambda_2 := N^{-1}  \Gamma_w^{o2} \times \Gamma_x^{o2}$. Moreover removing from $\mcal T^\pi$ the edges and internal vertices of $\mathfrak S(e)$ gives the subgraph we obtain assuming $\mbf n(e)=1$. In consequence, our operation is equivalent to replace $Y_{\mbf n(e)}$ by the entry-wise product of the above matrix with $Y_1$. 

Similarly, let $e,e'$ be two edges of $T^{\rho_0}$ that form a simple cycle of length 2. Assume that $\mbf n(e)+\mbf n(e')$ is greater than 2, and denote by $\mathfrak S(e,e')$ a subgraph generated by all internal vertices in the niche of $e$ but one pairing, and all edges attached to it. This graph has one vertex $\mrm{out}$ in $V_1$, on vertex $\mrm{in}$ in $V_2$,  $v_0(e,e'):=\frac {\mbf n(e)+\mbf n(e')-2}2$ internal vertices, and the same configuration of double edges as in $\mathfrak S(e)$ of the previous paragraph. The important fact is that the graph operation $g_{\mbf n(e), \mbf n(e')}  = \big( \mathfrak S(e,e'), \mrm{in}, \mrm{out}\big)$  factorizes
	\eq
		\lefteqn{N^{-v_0(e,e')}g_{\mbf n(e), \mbf n(e')} (\Gamma_w, \Gamma_x)}\\
		&= & \Big( N^{- \frac{ \mbf n(e) -1}2}  g_{\mbf n(e)}  \Big)(\Gamma_w, \Gamma_x) \circ \Big( N^{- \frac{ \mbf n(e') -1}2}  g_{\mbf n(e')}\Big) (\Gamma_w, \Gamma_x),
	\qe
where $\circ$ is the entry-wise product. While removing from $\mcal T^\pi$ the edges and internal vertices of $\mathfrak S(e,e')$ also gives the subgraph we obtain assuming $\mbf n(e)=1$, we can distribute the induced contribution as profiles applied to $e$ and to $e'$ separately.

This motivates the introduction of the following collection of matrices.
\begin{lemma}\label{Lem:DistLin} Let $W=\Gamma_w \circ W'$ and $X=\Gamma_x \circ X'$ be as in our main theorem. Let $\mbf Y^{\mrm{lin}}= \big( Y^{lin}(h) \big)_{h\in \mbb C[y]}$ be the collection of $N_1\times N_2$ profiled matrices such that for any polynomial $h$, 
	$$  Y^{\mrm{lin}}(h)  =   \esp\big[  h' [  \{ \xi \sqrt[\circ]{ \Lambda_2}   \}  ] \big] \circ \Big( \frac{W}{\sqrt N}  \times \frac{X}{\sqrt N} \Big),$$
where  $\Lambda_2 := N^{-1}  \Gamma_w^{o2} \times \Gamma_x^{o2}$ and  $\sqrt[\circ]{ \Lambda_2} := \big(\sqrt{ \Lambda_2(i,j) } \big)_{i,j}$. 
Then $ {\mbf Y^{\mrm{lin}}}$ converges in traffic distribution and for any reference graph $T_{\mbf n} = (V,E,\mbf n)$, with notations as in \eqref{Eq:PreHeuristic} we have
 \eq 
 \tau_N \big[ T_{\mbf n}(   {\mbf Y}^{\mrm{lin}}) \big]  & = &\sum_{\substack{\rho_0 \in \mathcal{P}({V}) \\ T_\mbf n^{\rho_0} \mrm{ \ cactus}}} 
		 \psi_2^{|V_2^{\rho_0}|}  
		 \psi_1^{|V_1^{\rho_0}|} \delta^0\big[\mcal T_{\mbf n}^{\pi_0}(\Gamma_w, \Gamma_x)\big] 				\nonumber\\
		&&  \times \ \psi_0^{c(\rho_0)}  \prod_{e\in E}   \esp\big[ h'_{\mbf n(e)}(\xi) \big]   +o(1).
\qe
\end{lemma}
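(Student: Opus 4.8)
The plan is to mimic the argument of Lemma \ref{Lem:DistB}, replacing the cut-edge analysis by the cycle analysis of Sections \ref{Focus2} and \ref{Focus3}, but now with one crucial extra ingredient: since $\mbf Y^{\mrm{lin}}$ involves the \emph{random} matrices $W^{\mrm{Gau}},X^{\mrm{Gau}}$ (here written $W,X$ but Gaussian in the intended application, or more precisely i.i.d. with the same second moments), the traffic distribution will produce, besides the profile factor $\delta^0[\mcal T^{\pi_0}(\Gamma_w,\Gamma_x)]$, a genuine combinatorial sum over pairings coming from the Wick/moment expansion of $\frac W{\sqrt N}\times\frac X{\sqrt N}$, and this is exactly what must reproduce the cactus structure together with the factor $\psi_0^{c(\rho_0)}$. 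Concretely, first I would write out $\tau_N[T_{\mbf n}(\mbf Y^{\mrm{lin}})]$ in terms of the injective distribution, using the substitution/graph-operation language: each edge $e$ of $T_{\mbf n}$ labeled $\mbf n(e)$ is replaced by the graph monomial that realizes the entry-wise product $\esp[h'_{\mbf n(e)}[\{\xi\sqrt[\circ]{\Lambda_2}\}]]\circ(\frac W{\sqrt N}\times\frac X{\sqrt N})$, i.e.\ a ``$\Gamma$-part'' given by $\mathfrak S(e)$ as in \eqref{Eq:GraphProofLinear} (with $v_0=\frac{\mbf n(e)-1}2$ internal vertices and double $w$/$x$ edges) together with a ``$Y_1$-part'' consisting of the two extra-niche edges through a single internal vertex. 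I would then pull the scalar factors $\prod_{e\in E}\esp[h'_{\mbf n(e)}(\xi)]$ out front (these are the $N^{-v_0}g_{\mbf n(e)}$ evaluations times $\esp[\xi^{\mbf n(e)-1}]$ counting the intra-niche pairings of $\mathfrak S(e)$), exactly as in the proof of Lemma \ref{Lem:DistB}.

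Second, I would analyze the remaining object, which is the traffic quantity attached to the graph $T$ in which every edge carries the matrix $\frac W{\sqrt N}\times\frac X{\sqrt N}$ and every edge is additionally dressed with the deterministic profile $\Lambda_2^{\circ\frac{\mbf n(e)-1}2}$. Since $\frac W{\sqrt N}\times\frac X{\sqrt N}=Z_1Z_2$ with $Z_1=\frac W{\sqrt N}$, $Z_2=\frac X{\sqrt N}$ rectangular matrices with i.i.d.\ centered entries of variance $\frac1N\Gamma_w^{\circ2}$, resp.\ $\Gamma_x^{\circ2}$, I would expand the entry products by independence and by the i.i.d.\ structure of $W',X'$, reducing to a sum over pairings of the $w$-edges and of the $x$-edges (higher multiplicities are subleading — this is precisely where one re-uses Lemma \ref{eq_graph} and the skeleton estimates \eqref{Eq:SkelCard} type bounds). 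The surviving pairings, combined with the injectivity of $\phi$ on the reference vertices, force the quotient graph $T^{\rho_0}$ to be a cactus: each $Y_1$-edge must be paired with exactly one other $Y_1$-edge in the $x$-variable and one in the $w$-variable, and following the chain of such pairings closes up into simple cycles (length-$2$ cycles handled by Section \ref{Focus2}, longer cycles by Section \ref{Focus3}), while any edge that is a cut edge of $T^{\rho_0}$ gives a vanishing contribution (its single $w$- or $x$-factor is unmatched). Along each simple cycle of length $L$, the internal pairing vertex of each $Y_1$-edge contributes one free summation index ranging over $[N_0]$, giving a factor $N_0\cdot N^{-1}$ per cycle — that is, $\psi_0$ per cycle, hence $\psi_0^{c(\rho_0)}$ — while the product of the entry variances along the cycle reorganizes, using $\Lambda_2=N^{-1}\Gamma_w^{\circ2}\times\Gamma_x^{\circ2}$ and the dressing $\Lambda_2^{\circ\frac{\mbf n(e)-1}2}$, into exactly $\delta^0[\mcal T_{\mbf n}^{\pi_0}(\Gamma_w,\Gamma_x)]$, the same profile weight as in \eqref{Eq:PreHeuristic}. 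The powers of $\psi_1,\psi_2$ come as usual from the count of injective split maps $(N_1)_{|V_1^{\rho_0}|}(N_2)_{|V_2^{\rho_0}|}=N^{|V_1^{\rho_0}|+|V_2^{\rho_0}|}\psi_1^{|V_1^{\rho_0}|}\psi_2^{|V_2^{\rho_0}|}(1+o(1))$.

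Third, I would collect the exponent of $N$ and check it is $\le 0$ with equality exactly on cacti, using $\eta$-type bookkeeping: the balance is $-1+|V_1^{\rho_0}|+|V_2^{\rho_0}|+(\text{number of internal pairing vertices})-|E|$, and Lemma \ref{eq_graph} applied to the skeleton (together with the fact that, with variance-$1$ entries, each $w$-edge and each $x$-edge must be matched) yields $\eta\le0$ with equality iff $T^{\rho_0}$ is a cactus; the error terms from replacing free summations by injective ones, and from discarding multiplicity $\ge3$ matchings, are $o(1)$ exactly as in Lemma \ref{ProfileTrick} and the proof of Lemma \ref{Lem:DistB}. Assembling the three pieces gives the stated formula, and convergence in traffic distribution follows because every summand converges and the sum is finite.

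\textbf{Main obstacle.} The delicate point is the bookkeeping that identifies the profile weight obtained from the Gaussian/i.i.d.\ pairing expansion of $\prod_e(Z_1Z_2)(\phi(e))$ dressed by $\Lambda_2^{\circ\frac{\mbf n(e)-1}2}$ with the weight $\delta^0[\mcal T_{\mbf n}^{\pi_0}(\Gamma_w,\Gamma_x)]$ coming from $\mcal T$; one must check that the pairing pattern forced on the $Y_1$-edges along a cycle reconstructs precisely the central-block-plus-intra-niche-pairing structure of $\pi_0$ described in Sections \ref{Focus2}–\ref{Focus3}, so that the two profile expressions literally coincide (and not merely up to an isomorphism-dependent correction). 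Verifying this requires carefully matching the internal vertex of each $Y_1$-edge with the ``central block'' vertex of the corresponding niche in $\mcal T^{\pi_0}$, and checking that longer-length cycles contribute $\prod_{i}\esp[h'_{n_i(S)}(\xi)]$ rather than some other symmetric function — this is the heart of the ``linear'' identification and is where the computation is least mechanical.
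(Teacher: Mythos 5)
Your plan is correct and follows essentially the same route as the paper's proof: the paper also factors $Y^{\mathrm{lin}}(h)$ into the all-ones model $Y_1=\frac{W}{\sqrt N}\frac{X}{\sqrt N}$ (whose contributing quotients are cacti, each cycle giving a factor $\psi_0$) dressed by the deterministic entry-wise profile, implemented there via ``corrective'' edges $r_{\mathbf n(e)}$ added to $\mathcal T_{\mathbf 1}$, and then re-absorbs that profile into $\delta^0\big[\mathcal T_{\mathbf n}^{\pi_0}(\Gamma_w,\Gamma_x)\big]$ exactly through the identity \eqref{Eq:GraphProofLinear} and Lemma \ref{ProfileTrick}, which is the step you flagged as the main obstacle.
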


Note that the expression coincides with \eqref{Eq:PreHeuristic} on cacti whose strong components are simple cycles of length greater than 2. Moreover, the map $(h, W, X )\mapsto  { \mbf Y^{\mrm{lin}}}$ is 3-linear.
\begin{proof} Let $T_{\mbf n}=(V,E,\mbf n)$ be a reference graph and denote by $T_{\mbf 1}=(V,E,\mbf 1)$ the test graph obtained from $T$ by assuming all edges are labeled 1. We denote $\mcal T_{\mbf 1} = (\mcal V_{\mbf 1}, \mcal E_{\mbf 1}, \gamma_{\mbf 1}) := \mcal T_{T_\mbf 1}$.  Let $\mathfrak T=(\mathfrak V, \mathfrak E, \gamma)$ be the test graph in the variables $w$, $x$, and a collection of variables $\mbf r = (r_n)_{n\geq 1}$ obtained from $\mcal T_\mbf 1$ as follows:  for each edge $e$ of $T$, we add a so-called \emph{corrective} edge in $\mcal T_{\mbf 1}$ between the endpoints of $e$ labeled $r_{\mbf n(e)}$. Note that $\mathfrak T$ and $\mcal T_1$ have same vertex set. Then similarly to \eqref{TauToC} for computing $\tau_N[T_\mbf 1( { Y_1})]$, with $\Psi^\pi= \psi_0^{|V_0^\pi|}  \psi_1^{|V_1^\pi|}\psi_2^{|V_2^\pi|}  $ we have
	\eq
	 \lefteqn{\tau_N \big[ T_\mbf n ( \mbf Y^{(\mrm{lin})}) \big]  }\\
		& =&    \sum_{ \pi \in \mathcal{P}(\mathcal{V}_\mbf 1) } \  N^{\eta(\pi)}\Psi^\pi   \delta^0\big[ \mathfrak T(\Gamma_w,\Gamma_x, \mbf R) \big]\delta^0[\mcal T_{\mbf 1}^\pi(W',X')]\big( 1 + o(1) \big).
\qe
where $\eta$ can simply be written
	$$\eta(\pi ) = -1 - \frac{ \mcal E} 2 + |\mcal V^\pi| = \Big(-1 - |\bar {\mcal E}^\pi| + |\mcal V^\pi| \Big) + \Big(|\bar {\mcal E}^\pi| -  \frac{ \mcal E} 2 \Big)$$
 Most of the arguments in the sequel can be deduced from the previous section, but it may be of interest to have an independent sketch of proof. Lemma \ref{eq_graph} and $\delta^0[\mcal T_{\mbf 1}^\pi(W',X')]\neq 0$ imply that  $\mcal T^\pi$ is a cactus whose cycles are of length two (that we call double tree later on). Moreover a pair of double edge have same label, either $w$ or $x$. For each edge $e$ of  $T^{\rho_0}$, we call $w$-neighbor of $e$ the edge $e'$ such that the edges labeled $w$ in their niche are identified, and similarly we define the $x$ neighbors. Two edges are called niche neighbor if they are either $w$ or $x$-neighbors. The definitions imply that the edges of $T$ in a same equivalent class of equivalence for the niche-neighboring relation form a simple cycle in $T^{\rho(\pi)}$. For simple cycles of length greater than 2, their niches induce the usual star-shape subgraph of Section \ref{Focus3} (where the edges labels $\mbf n(e)$ are equal to one), see an example Figure \ref{F04_PCactusWigner}.

  \begin{figure}[h]
    \begin{center}
     \includegraphics[scale=.75]{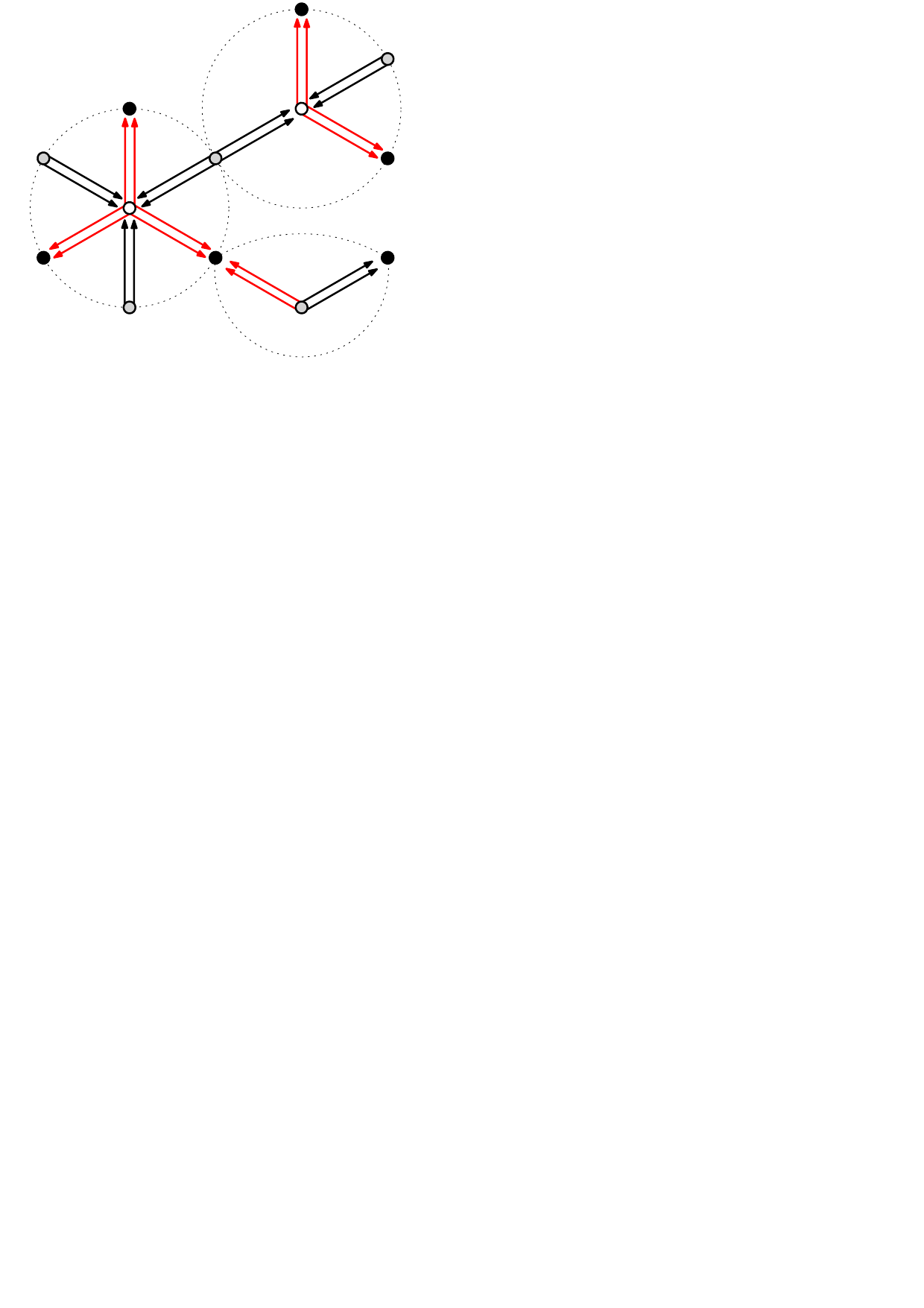}
    \end{center}
    \caption{The test graph that contribute to the injective traffic distribution of $\frac{W' X'}{N}$, where we have encircled the equivalent class for the niche neighboring relation.}
    \label{F04_PCactusWigner}
    \end{figure}

This implies that $T^{\rho(\pi)}$ is a cactus whose  number of strong components is  $c(\rho) =|V_0|^\pi$, and there is a single $\pi$ corresponding to $\rho_0$. Using $\delta^0[\mcal T_{\mbf 1}^\pi(W',X')] = 1$, $ \Psi^\pi = \psi_2^{|V_2^{\rho_0}|} \psi_1^{|V_1^{\rho_0}|}  \psi_0^{c(\rho_0)}$ in the following computation
	\eq
	 \lefteqn{\tau_N \big[ T_\mbf n ( \mbf Y^{(\mrm{lin})}) \big]  }\\
		& =&  \sum_{\substack{\rho_0 \in \mathcal{P}({V}) \mrm{ \ s.t.} \\ T_\mbf n^{\rho_0} \mrm{ \ cactus}}} \      \sum_{\substack{ \pi \in \mathcal{P}(\mathcal{V}_\mbf 1) \mrm{ \ s.t.} \\ \rho(\pi)=\rho_0}}  \Psi^\pi\ \delta^0\big[ \mathfrak T^\pi(\Gamma_w,\Gamma_x, \mbf R) \big]\delta^0[\mcal T_{\mbf 1}^\pi(W',X')] + o(1)\\
		&= &   \sum_{\substack{\rho_0 \in \mathcal{P}({V}) \mrm{ \ s.t.} \\ T_\mbf n^{\rho_0} \mrm{ \ cactus}}}  \psi_2^{|V_2^{\rho_0}|}  
		 \psi_1^{|V_1^{\rho_0}|} \delta^0\big[ \mathfrak T^{\pi_0}(\Gamma_w,\Gamma_x, \mbf R) \big] \times  \psi_0^{c(\rho_0)}   +o(1),
\qe
where $\pi_0$ is any partition of $\mcal V_{\mbf 1}$

On the other hand, recalling that $ \upiota_{(1, 2)}$ is the canonical injection of $\mrm M_{N_1,N_2}(\mbb R) \to \mrm M_{N,N}(\mbb R)$, for any edge $e\in E$ and any split injective map $\phi:V^{\pi_0}\to[N]$, 
	\eq
	\upiota_{(1, 2)}\big( R( h_{\mbf n(e)} )\big)\big( \phi(e) \big) & = &\esp\big[  h_{\mbf n(e)}'(\xi) \big] 
	\big( \upiota_{(1, 2)} (  \Lambda_2  )\big)^{   \circ \frac{\mbf n(e)-1}2 }    \big( \phi(e) \big)\\
	& = & \esp\big[  h_{\mbf n(e)}'(\xi) \big] \times \upiota_{(1, 2)} \big( N^{-v_0}g_{\mbf n(e)}(\Gamma_w, \Gamma_x) \big)\big( \phi(e) \big),
	\qe
where $g_{\mbf n(e)}$ is given in \eqref{Eq:GraphProofLinear}.  Therefore using Lemma \ref{ProfileTrick} on $\mcal T_n$ for each subgraph $\mathfrak S(e)$ and $\mathfrak S(e,e')$ defined in the introduction of this section, we have 
	$$\delta^0\big[ \mathfrak T^{\pi_0}(\Gamma_w,\Gamma_x, \mbf R \big] =   \prod_{e\in E}   \esp\big[ h'_{\mbf n(e)}(\xi) \big] \times \delta^0\big[ \mcal T_n(\Gamma_w,\Gamma_x)\big].$$
Altogether, this proves the asymptotic expression stated in the lemma. Hence the convergence holds for all reference test graph, so $\mbf Y^{\mrm{lin}}$ converges in traffic distribution.
\end{proof}

\subsection{Identification of an additive circular noise}

\subsubsection{Constant profiles}
In this section we assume that all entries of $\Gamma_w$ and $\Gamma_x$ are equal to 1. Since we look for a collection $\mbf Y^{\mrm{per}}$ such that $\mbf Y^{\mrm{lin}} + \mbf Y^{\mrm{per}}+ \mbf B$ as the same limiting traffic distribution as $\mbf Y$, we shall consider the bilinear map 
\eq
	f ( h_1, h_2 )&  =&   \esp \big[  h_{1}(\xi) h_{2}(\xi) \big] -  \esp\big[ h'_{1}(\xi) \big] \times \esp\big[ h'_{2}(\xi) \big],
\qe
obtained from the length 2 cycle contribution of \eqref{Eq:PreHeuristic} minus the expression valid only for higher length cycles. 
The family $(g_n)_{n\geq 0}$ of Hermite polynomials is a basis of $\mbb C[y]^{\mbb N}$ by 
	$$ g_n : y \mapsto   ({-1})^n e^{\frac {y^2} 2} \frac{ \mrm{d}^n}{\mrm d y^n} e^{-\frac{y^2}2}.$$
It is an orthogonal sequence for the standard Gaussian law
	$$\esp[g_n(\xi) g_m(\xi)]=\delta_{n,m} n!, \quad \forall n,m\geq 0.$$
The Leibniz formula implies the  formula 
	$$  \frac{ \mrm{d}^{n+1}}{\mrm d y^{n+1}} e^{-\frac{y^2}2} = -y  \frac{ \mrm{d}^n}{\mrm d y^n} e^{-\frac{y^2}2}-2n  \frac{ \mrm{d}^{n-1}}{\mrm d y^{n-1}} e^{-\frac{y^2}2}$$
 which yields the identity $g'_n(y) =n g_{n-1}(y), \quad \forall n\geq 1.$
 Moreover the orthogonality relation implies  $\esp[ g_n(\xi)] = \esp[ g_n(\xi)g_0(\xi)]=\delta_{n,0}$ for any $n\geq 1$ and so $\esp[g'_n(\xi)] = n \esp[g_{n-1}(\xi)] = \delta_{n,1}$. We therefore have for all $n,m\geq 1$
	\eq
		  f(g_n, g_m)  & = &  n! \delta_{n,m} - \delta_{n,1}\delta_{m,1}.
	\qe
We can hence propose the following collection of matrices. We call \emph{double-tree} a cactus whose simple cycles are all of length two.

\begin{lemma}\label{Lem:DistPerConstant} Let $\mbf Y^{per} = \big( Y^{per}(h) \big)_{h \in \mbb C[y]}$ be a collection of $N_1\times N_2$ random matrices such that the map $h\mapsto Y^{per}(h)$ is linear and
\begin{itemize}
	\item  $Y^{per}(g_0) = Y^{per}(g_1)=0$,
	\item  the matrices $\sqrt{\psi_0  n ! N }^{-1}Y^{per}(g_n),n\geq 2$ labeled by Hermite polynomials are i.i.d. with i.i.d.  real standard Gaussian variables. 
\end{itemize}
Then $\mbf Y^{per}$ converges in traffic distribution and for any reference test graph $T_{\mbf n}$ whose edges are labeled by integer greater than 1,
\eqa
	\lefteqn{ \tau_N \big[ T_{\mbf n}( \mbf Y^{per}) \big] }\label{Eq:DistPerConstant}\\
		& = &  \sum_{\substack{\rho_0 \in \mathcal{P}({V}) \\ T_{\mbf n}^{\rho_0} \mrm{\ double  \ tree}}} 
		 \psi_2^{|V_2^{\rho_0}|}  
		 \psi_1^{|V_1^{\rho_0}|} 	 \psi_0 ^{c(\rho_0)}			\nonumber\\
		 & & \times  \prod_{C\in \mcal C(\rho_0)}  \Big(   \esp \big[  h_{n(C)(\xi)} h_{n'(C)}(\xi) \big] -   \esp \big[  h'_{n(C)(\xi)} \big]  \esp \big[  h'_{n'(C)(\xi)} \big] \Big)    \nonumber    \\
 	\qea
where notations are as in Lemma \ref{Lem:PreHeuristic}.
\end{lemma}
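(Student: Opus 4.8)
The plan is to reduce, by linearity, to test graphs carrying Hermite labels, then compute the resulting traffic by a Gaussian Wick expansion and extract the leading order by the forest inequality (Lemma~\ref{eq_graph}); this is a simpler, purely Gaussian analogue of the pseudo--cactus analysis of Section~\ref{Sub:Support}. Concretely, since $h\mapsto Y^{per}(h)$ is linear and $\tau_N[T(\mbf A)]$ is multilinear in the matrices attached to the edges of $T$, expanding each power function in the Hermite basis $h_n=\sum_{k\geq0}\alpha_{n,k}g_k$ and using $Y^{per}(g_0)=Y^{per}(g_1)=0$ writes $\tau_N[T_{\mbf n}(\mbf Y^{per})]$ as a finite linear combination, with coefficients $\prod_e\alpha_{\mbf n(e),k_e}$, of the quantities $\tau_N[T_{\mbf k}(\mbf Y^{per})]$ in which edge $e$ carries the label $g_{k_e}$, $k_e\geq2$. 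Hence it suffices to prove convergence of, and to evaluate, $\tau_N[T(\mbf Y^{per})]$ for a split connected reference graph $T=(V,E,\mbf k)$ with all $k_e\geq2$.

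For such a $T$, write $Y^{per}(g_k)=c_{k,N}G_k$ as prescribed by the statement --- the $G_k$, $k\geq2$, being independent $N_1\times N_2$ matrices with i.i.d. standard real Gaussian entries, embedded in $\mrm M_N(\mbb R)$ via $\upiota_{(1,2)}$ --- the relevant feature of the normalisation being $Nc_{k,N}^2\to\psi_0\,k!$. Using $\Tr[T(\mbf A)]=\sum_{\rho_0}\Tr^0[T^{\rho_0}(\mbf A)]$ over split partitions of $V$ and \eqref{DeltaTrickZero},
\[
\tau_N[T(\mbf Y^{per})]=\frac1N\sum_{\rho_0\mrm{\ split}}(N_1)_{|V_1^{\rho_0}|}(N_2)_{|V_2^{\rho_0}|}\,\delta^0[T^{\rho_0}(\mbf Y^{per})].
\]
Since the uniform map defining $\delta^0$ is injective on $V^{\rho_0}$, two edges of $T^{\rho_0}$ carry the same matrix entry exactly when they are parallel (over a common skeleton edge); as the $G_k$ are independent across $k$ with centred i.i.d. Gaussian entries, Wick's formula gives
\[
\delta^0[T^{\rho_0}(\mbf Y^{per})]=\prod_{\bar e\in\bar E^{\rho_0}}\ \prod_{k\geq2}c_{k,N}^{\,m_{\bar e,k}}\,\esp[\xi^{m_{\bar e,k}}],
\]
where $m_{\bar e,k}$ is the number of edges over $\bar e$ labelled $g_k$ and $\xi\sim\mcal N(0,1)$; this vanishes unless every $m_{\bar e,k}$ is even.

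Then I would count orders in $N$: the $\rho_0$--summand has magnitude $N^{\,|V^{\rho_0}|-1-|E|/2}$, since $\sum_{\bar e,k}m_{\bar e,k}=|E|$ and $c_{k,N}^m=O(N^{-m/2})$. Lemma~\ref{eq_graph} (with $T^{\rho_0}$ connected) gives $|V^{\rho_0}|-1\leq|\bar E^{\rho_0}|$, with equality iff the skeleton is a tree, while $\delta^0\neq0$ forces total multiplicity $\geq2$ on each skeleton edge, hence $|\bar E^{\rho_0}|\leq|E|/2$. So every summand is $O(1)$ --- which already gives convergence in traffic distribution, there being finitely many split $\rho_0$ --- and the limit is nonzero only when both inequalities are tight: $T^{\rho_0}$ has tree skeleton, each skeleton edge is doubled, and the two parallel edges over each skeleton edge share their Hermite label; i.e. $T^{\rho_0}$ is a double tree with matched labels, whose strong components are its $c=c(\rho_0)$ two--cycles and for which $|V^{\rho_0}|=c+1$. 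For such $\rho_0$ one has $N^{|V^{\rho_0}|-1-|E|/2}\to1$, $(N_i)_{|V_i^{\rho_0}|}N^{-|V_i^{\rho_0}|}\to1$, $(N_i/N)^{|V_i^{\rho_0}|}\to\psi_i^{|V_i^{\rho_0}|}$, and each doubled edge $C$ contributes $Nc_{n(C),N}^2\,\delta_{n(C),n'(C)}\to\psi_0\,n(C)!\,\delta_{n(C),n'(C)}$; collecting the factors,
\[
\tau_N[T(\mbf Y^{per})]\ \limN\ \sum_{\substack{\rho_0\mrm{\ split}\\ T^{\rho_0}\mrm{\ double \ tree}}}\psi_1^{|V_1^{\rho_0}|}\psi_2^{|V_2^{\rho_0}|}\psi_0^{c(\rho_0)}\prod_{C\in\mcal C(\rho_0)}n(C)!\,\delta_{n(C),n'(C)}.
\]

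It remains to translate. For labels $\geq2$, $n(C)!\,\delta_{n(C),n'(C)}=f(g_{n(C)},g_{n'(C)})$, so the displayed formula is exactly \eqref{Eq:DistPerConstant} in the Hermite case; expanding $h_n=\sum_{k\geq2}\alpha_{n,k}g_k$ (the $g_0,g_1$ parts being killed by $Y^{per}$) and using multilinearity together with $f(g_k,g_{k'})=k!\delta_{k,k'}$ rewrites the factor of each $C$ as $\sum_{k,k'\geq2}\alpha_{n(C),k}\alpha_{n'(C),k'}f(g_k,g_{k'})=f(h_{n(C)},h_{n'(C)})=\esp[h_{n(C)}(\xi)h_{n'(C)}(\xi)]-\esp[h'_{n(C)}(\xi)]\,\esp[h'_{n'(C)}(\xi)]$, the asserted expression. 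I expect the only real difficulty to be the order count and the ensuing double--tree support statement: it must be made airtight, although it is simply the limiting traffic distribution of a family of independent rectangular Gaussian matrices and could equally be deduced from the rectangular analogue of Proposition~\ref{Th:UI} with $\beta=0$. Everything else is bookkeeping of the $\psi$--powers, in particular verifying that the normalisation chosen in the statement is precisely what turns the $c(\rho_0)$ doubled edges into $\psi_0^{c(\rho_0)}\prod_C n(C)!$ and makes all powers of $N$ cancel.
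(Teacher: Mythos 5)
Your proposal is correct and follows essentially the same route as the paper: reduce by linearity of $h\mapsto Y^{per}(h)$ and multilinearity of $\tau_N$ to Hermite-labeled test graphs, compute the Gaussian injective traffic via a Wick expansion, use Lemma \ref{eq_graph} to localize the limit on double trees with matched labels and per-doubled-edge weight $\psi_0\, n!\,\delta_{n,n'}$, and then translate back through the bilinear form $f$; your bookkeeping of the $N$- and $\psi$-powers agrees with the paper's. One caveat on the final translation step: the identity $\sum_{k,k'\geq 2}\alpha_{n,k}\alpha_{n',k'}f(g_k,g_{k'})=f(h_n,h_{n'})$ misses the term $\alpha_{n,0}\alpha_{n',0}f(g_0,g_0)=\esp[\xi^{n}]\,\esp[\xi^{n'}]$, so it holds only when at least one label of each matched pair is odd; for two even labels the lemma as literally stated fails (e.g.\ a double edge with both labels equal to $2$ gives $2\psi_0\psi_1\psi_2$ on the left but $f(h_2,h_2)=3$ on the right), a restriction the paper's own proof also glosses over and which is harmless for the main theorem since only odd polynomials are used there.
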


\begin{example}For instance, one can compute the first odd Hermite polynomials $g_1(y) = y$, $g_3(y)=y^3-3y$ and $g_5(y) = y^5-10y^3 +14y$. Hence we have for the power functions $h_3 = g_3+3g_1$ and $h_5 = g_5 +10 g_3 +16 g_1$. So we can write $Y^{per}(h_3) = \sqrt{6} Z_1 $, $Y^{per}(h_5) = 10\sqrt{6}  Z_1 +2\sqrt{30} Z_2$, where $Z_1$ and $Z_2$ are independent matrices with i.i.d. real centered Gaussian variable of variance $\psi_0$.
\end{example}

\begin{remark} Let consider $\mbf Y^{\mrm{lin}}$ defined in Lemma \ref{Lem:DistLin}. If the profile matrices $\Gamma_w, \Gamma_x$ are constant equal to one, then so is the matrix $\sqrt[\circ]{ \Lambda_2}$. Hence the property $\esp[g'_n(\xi)] = \delta_{n,1}$ implies that $Y^{\mrm{lin}}(g_n) = 0$ if $g_n$ is the Hermite polynomial of order $n\geq 2$. The same reasoning shows that, for $\mbf B$ defined in Lemma \ref{Lem:DistB}, then $B(g_n)=0$ unless $n=3$. This is not longer true when $\sqrt[\circ]{ \Lambda_2}$ is not constant equal to one.
\end{remark}

\begin{proof} Let $T$ be a test graph whose edges are labeled by Hermite polynomials of order greater than 1.  The associated rectangular matrices are independent with i.i.d. entries, for which the computation of the traffic distribution is similar to \cite[Chapter 3]{Male2020}
\eq
	\lefteqn{ \tau_N \big[ T_{\mbf g}( \mbf Y^{per}) \big] }\\
	& = & \sum_{\rho_0 \in \mcal P(V) } \frac 1 N \sum_{\substack{\phi:V^{\rho_0} \to [N] \\ \mrm{injective}}} \esp\bigg[ \prod_{e \in E} \upiota_{1,2}\Big( Y^{per} (g_{e})\Big)\big( \phi(e) \big) \bigg]\\
	 & =&  \sum_{\rho_0 \in \mcal P(V) }  \sum_{\substack{\phi:V^{\rho_0} \to [N] \\ \mrm{injective}}} N^{\eta(\rho_0)}\big(1+o(1)\big) \psi_1^{|V_1^{\rho_0}|}\psi_2^{|V_2^{\rho_0}|} \delta^0\big[ T_{\mbf g}( \sqrt N \mbf Y^{per}) \big]
\qe
where $\eta(\rho_0) =-1-\frac{|E|}2 + |V^{\rho_0}|$. By Lemma \ref{eq_graph} we get
\eq
	 \tau_N \big[ T_{\mbf g}( \mbf Y^{per}) \big] & = & \sum_{\substack{\rho_0 \in \mcal P(V) \mrm{ \ s.t. }\\ T^{\rho_0} \mrm{ \ double \ tree}}}  \sum_{\substack{\phi:V^{\rho_0} \to [N] \mrm{injective}}}  \psi_1^{|V_1^{\rho_0}|}\psi_2^{|V_2^{\rho_0}|}\\
	 & & \ \times    \prod_{\{e,e'\} \in \bar E}  \esp\big[ y^{(g_e)}_{p,q} y^{(g_{e'})}_{p,q} \big]+ o(1)
\qe

In the above sum, $\bar E$ denotes the skeleton of $E$ and the product is over all elements of $\bar E$ that are denoted $\{e,e'\}$ where $e$ and $e'$ are two edges of $T_{\mbf g}$ that are identified by $\rho$. By definition of the matrices $Y^{per}$ and by orthogonality of the Hermite polynomials, we have $ \esp\big[ y^{(g_e)}_{p,q} y^{(g_{e'})}_{p,q} \big] =    \delta_{g_e , g_{e'}} \times  \psi_0 n(\mbf e)! = \psi_0 \esp\big[ g_e(\xi)g_{e'}(\xi)\big]$. We hence obtain the asymptotic formula 
\eq
	\lefteqn{ \tau_N \big[ T_{\mbf g}( \mbf Y^{per}) \big]}\\
	 & = &
	  \sum_{\substack{\rho_0 \in \mathcal{P}({V}) \\ T_{\mbf n}^{\rho_0} \mrm{\ double  \ tree}}} 
		 \psi_2^{|V_2^{\rho_0}|}  
		 \psi_1^{|V_1^{\rho_0}|} 	 \psi_0 ^{c(\rho_0)}  \prod_{C\in \mcal C(\rho_0)} \esp \big[ g_{n(C)}(\xi) g_{n'(C)}(\xi) \big].
\qe

Since $ \esp \big[  g'_n(\xi) \big]=0$ for all $n>1$, the asymptotic formula \eqref{Eq:DistPerConstant} is valid for all test graphs labeled by Hermite polynomials of positive order. Indeed, if there is an edge labeled $g_1$ then, since  $ \esp \big[ g_1(\xi) g_n(\xi) \big] -   \esp \big[  g'(\xi) \big]  \esp \big[  g'_n(\xi) \big]  = \esp \big[ \xi g_n(\xi) \big] -  \esp \big[  g'_n(\xi) \big] =0$ then \eqref{Eq:DistPerConstant} vanishes, and otherwise the expressions coincides. Since the Hermite polynomials form a basis of $\mbb C[y]$ and the map $h\mapsto Y^{per}(h)$ is linear, this proves the convergence in traffic distribution of $\mbf Y^{per}$. Finally, since \eqref{Eq:DistPerConstant} is a multilinear function of the labels, the formula is also valid for all test graphs labeled in $\mbb C [y]$. 
\end{proof}

In the rest of this section, we emphasis a property that we use later. We say that a family $\mbf A$ that converges in traffic distribution satisfies the \emph{asymptotic factorization property} whenever for any test graph $T_1\etc T_L$, we have
	$$\esp \Big[ \prod_{\ell=1}^L \frac 1 N \Tr \big[T(\mbf A ) \big] \Big] = \prod_{\ell=1}^L \tau_N\big[ T(\mbf A) \big] +o(1).$$
	
\begin{lemma} The couple $(W'/\sqrt{N}, X'/\sqrt N)$ and the collection ${\mbf Y^{\mrm{per}}}'$ defined in Lemma \ref{Lem:DistLin} and Lemma \ref{Lem:DistPer} satisfy the asymptotic factorization property. 
\end{lemma}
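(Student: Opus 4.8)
The plan is to deduce the asymptotic factorization property from the fact that the underlying randomness is an \emph{independent} family of matrices with i.i.d.\ entries, for which factorization is classical, and then to propagate it through the linear-and-Hadamard operations that define $\mbf Y^{\mrm{per}}$ (and $\mbf Y^{\mrm{lin}}$). Concretely, the whole collection under consideration is a function of the independent family $\widehat{\mbf A} := \big( W'/\sqrt N,\, X'/\sqrt N,\, (Z_m/\sqrt N)_{m\geq 2} \big)$: by linearity of $h\mapsto Y^{\mrm{per}}(h)$ and its description in Lemma~\ref{Lem:DistPerConstant} (resp.\ its profiled analogue), each $Y^{\mrm{per}}_m(h)$ is a bounded deterministic matrix Hadamard-multiplied by one of the $Z_m/\sqrt N$, and likewise $Y^{\mrm{lin}}(h)$ from Lemma~\ref{Lem:DistLin} is a bounded deterministic matrix Hadamard-multiplied by a graph monomial in $W'/\sqrt N, X'/\sqrt N$. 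Since the evaluation of a test graph is multilinear in its edge labels and since Hadamard-multiplying an edge by a fixed bounded matrix $\Gamma$ only inserts the bounded deterministic scalar weight $\Gamma(\varphi(e))$ into the sums \eqref{Eq:EvalTestGraph}, it suffices to prove the asymptotic factorization property for $\widehat{\mbf A}$ while allowing test graphs whose edges additionally carry such bounded deterministic weights.

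Next I reduce the factorization of an $L$-fold product to a covariance estimate. For test graphs $T_1,\dots,T_L$ (with bounded deterministic edge-weights), the combinatorial trace multiplies over disjoint unions, $\prod_\ell \Tr[T_\ell(\widehat{\mbf A})] = \Tr\big[\,\bigsqcup_\ell T_\ell(\widehat{\mbf A})\,\big]$, and decomposing over split partitions of the disjoint vertex set via $\Tr[T]=\sum_\pi\Tr^0[T^\pi]$ one separates the partitions that do \emph{not} merge vertices of distinct $T_\ell$'s — these reconstruct $\prod_\ell\esp[\tfrac1N\Tr T_\ell]$ by independence of the entries — from the remaining ones. For a partition that genuinely glues two pieces, a nonvanishing $\delta^0$-weight forces, by centeredness and independence of the entries of $W',X',Z_m$, an edge-identification crossing between the two pieces, and the power-counting of Section~\ref{Sec:ConvEtSupp} applied to the glued graph then shows such a configuration contributes $O(N^{-1})$ relative to the product, because connecting two components costs one factor of $N$ that the gain in vertices cannot compensate. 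Hence $\cov\big(\tfrac1N\Tr[T_1(\widehat{\mbf A})],\tfrac1N\Tr[T_2(\widehat{\mbf A})]\big)\to 0$, so each $\tfrac1N\Tr[T_\ell(\widehat{\mbf A})]$ concentrates: $\tfrac1N\Tr[T_\ell(\widehat{\mbf A})]-\tau_N[T_\ell(\widehat{\mbf A})]\to 0$ in $L^2$. Because the entries of $W',X'$ have all moments finite uniformly in $N$ and the $Z_m$ are Gaussian, all these traces are bounded in every $L^p$, so the products converge in $L^1$ and $\esp\big[\prod_\ell\tfrac1N\Tr T_\ell\big]=\prod_\ell\tau_N[T_\ell]+o(1)$. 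Undoing the reduction of the first paragraph — bounded deterministic profiles and finite linear combinations of matrices preserve the property — yields the asymptotic factorization property for $\big(W'/\sqrt N,\, X'/\sqrt N,\, \mbf Y^{\mrm{per}}\big)$, and verbatim for $\mbf Y^{\mrm{lin}}$.

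The main obstacle is exactly the power-counting for the gluing partitions in the covariance expansion: one must check that merging two convergent test graphs through a weight-nonzero split partition always produces a strictly negative exponent of $N$ relative to the product of the two traces. This is morally the inequality $\eta(\pi)\le 0$ already established in Section~\ref{Sec:ConvEtSupp}, and identical to the classical computation for independent i.i.d.-entry matrices in \cite[Chapter~3]{Male2020}; the only point to verify is that it goes through uniformly when the $T_\ell$ are labeled by arbitrary odd polynomials and carry bounded deterministic profile weights, which holds since those weights are bounded and create no fluctuation. Alternatively, one may simply invoke the general statement of \cite{Male2020} that an independent family of i.i.d.-entry matrices with all moments finite converges in traffic distribution and satisfies the asymptotic factorization property, together with the elementary stability of this property under finite linear combinations and Hadamard multiplication by fixed bounded matrices.
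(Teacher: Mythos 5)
Your proposal is correct and follows essentially the same route as the paper: both reduce the expectation of a product of normalized traces to the injective-trace expansion of the disjoint union of the test graphs, where the power counting of Section \ref{Sec:ConvEtSupp} (with the $-1$ replaced by minus the number of connected components) shows that every partition identifying vertices of distinct components contributes $O(N^{-1})$, and independence of the matrix entries factorizes the remaining, non-gluing terms into the product of the individual traces. Two minor remarks: the detour through covariance decay, concentration and uniform $L^p$ bounds is just a repackaging of that same combinatorial estimate (the paper computes the expectation of the product directly on the disconnected graph), and your intermediate claim that a nonvanishing $\delta^0$-weight forces a cross-component \emph{edge} identification is neither needed nor quite true (two components may be glued at a single vertex without killing the weight); the sufficient reason, which you also state, is that any cross-component vertex identification strictly lowers the exponent of $N$.
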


\begin{proof} The proof follows from minor modification of the convergence $ \tau_N\big[ T(\mbf Y^{\mrm{lin}}) \big]$ by considering a unconnected graph $T$ and normalizing the trace by $N^{-c}$ where $c$ is the number of connected components. The independence of the matrix entries shows that we can factorize the contributions of each connected component. 
\end{proof}

\subsubsection{General profiles}

We consider a collection of matrices  as in Lemma \ref{Lem:DistPerConstant} that we denote ${\mbf Y^{per}}' = \big({Y^{per}}'(h)\big)_{h\in \mbb C[y]}$. 
We look for a collection of matrices ${\mbf Y^{per}} = \big( Y^{per}(h) \big)_{h\in \mbb C[y]}$ of the form
	\eqa\label{Eq:Proposition Yper}
		 {  Y^{per}}(h) = \tilde R(h) \circ {\mbf Y^{per}}'(h)
	\qea
for  $\tilde R(h)$ chosen in order to match the remaining terms.

Section \ref{Sec:Ylin} gives an expression of  the traffic distribution of a couple of $N_1\times N_2$  profiled  Gaussian matrix $Z_1 =   R_1  \circ Z'$, $Z_2 = R_2 \circ Z'$, where $\sqrt N Z'$ has i.i.d. standard real Gaussian entries. Their injective traffic distribution is supported on double trees, and the contribution of a double edge with labels $z_1$ and $z_2$ is as follows: for the graph monomial $g$ with two vertices $\mrm{out}\in V_1$, $\mrm{in} \in V_2$, and one double edge labeled $z_1$ and $z_2$ from $\mrm{in}$ to $\mrm{out}$, we have
	$$g(   R_1,R_2 ) =  R_1 \circ R_2.$$
Of independence interest, note that $Ng(   R_1,R_2 )$ is the matrix of the co-called $\mcal R$-transform over the diagonal computed by Shlyakhtenko \cite{SHL}.

On the other hand, let us $S$ be a double edge formed by two edges $e,e'$ of $T^{\rho_0}$ as in Section \ref{Focus2}. Its vertices are $ \mrm{in}\in V_2,  \mrm{out}\in V_1$ and a set $\mcal V_0(S)$ of $\frac{ \mbf n(e) + \mbf n(e')}2$ internal vertices, with double edges between internal and non internal vertices as usual. The associated graph monomial $g_{\mbf n(e)+ \mbf n(e')}=(S,\mrm{in}, \mrm{out})$ satisfies
	\eq
		\lefteqn{ \frac 1 N g_{\mbf n(e)+ \mbf n(e')}(\Gamma_w, \Gamma_x) }\\
	 & = & \bigg[  \Big( \frac 1 N  \sum_{k=1}^{N_0} \Gamma_w^{\circ 2}(i,k) \Gamma_w^{\circ 2} (k,j)\Big)^{ \frac{ \mbf n(e) + \mbf n(e')}2} \bigg]_{i,j} =  \sqrt[\circ]{ \Lambda_2}^{\circ  \mbf n(e)} \circ    \sqrt[\circ]{ \Lambda_2}^{\circ  \mbf n(e')} .
	\qe
We can hence propose the following collection of matrices, using the simple relation $y^n h'_n(x) = y h'_n(xy)$ for $h_n:x\mapsto x^n$.

\begin{lemma}\label{Lem:DistPer} Let ${\mbf Y^{per}} = \big( Y^{per}(h) \big)_{h\in \mbb C[y]}$ be the collection of matrices 
	$$Y^{per}(h)  =  h \big[ \big\{ \sqrt[\circ]{ \Lambda_2} \big\}  \big]  \circ {Y^{per}}'(h),$$
where the collection ${\mbf Y^{per}}'$ is as in Lemma \ref{Lem:DistPerConstant}. Then ${\mbf Y^{per}} $ converges in traffic distribution, and for any reference test graph $T_{\mbf n}$, we have
\eq
	\lefteqn{ \tau_N \big[ T_{\mbf n}( \mbf Y^{per}) \big]}\\
	 & = &
	  \sum_{\substack{\rho_0 \in \mathcal{P}({V}) \\ T_{\mbf n}^{\rho_0} \mrm{\ double  \ tree}}} 
		 \psi_2^{|V_2^{\rho_0}|}  
		 \psi_1^{|V_1^{\rho_0}|} \delta^0\big[ \mcal  T_{\mbf n}^{\pi_0}(\Gamma_w, \Gamma_x)\big] 	 \psi_0 ^{c(\rho_0)}  \prod_{C\in \mcal C(\rho_0)} f(h_{\mbf n(e)}, h_{\mbf n(e')}),
\qe
where we recall that $f ( h_1, h_2 ) =   \esp \big[  h_{1}(\xi) h_{2}(\xi) \big] -  \esp\big[ h'_{1}(\xi) \big] \times \esp\big[ h'_{2}(\xi) \big]$.
\end{lemma}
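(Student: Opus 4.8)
The plan is to run the same two--step scheme as in the proofs of Lemma~\ref{Lem:DistB} and Lemma~\ref{Lem:DistLin}: first reduce to the constant--profile collection ${\mbf Y^{per}}'$, whose limiting traffic distribution is already given by Lemma~\ref{Lem:DistPerConstant}, and then reinstate the variance profile by means of the substitution Lemma~\ref{ProfileTrick}. I fix a reference test graph $T_{\mbf n}=(V,E,\mbf n)$ and set $R(h_n):=h_n[\{\sqrt[\circ]{\Lambda_2}\}]=\sqrt[\circ]{\Lambda_2}^{\circ n}$, a bounded deterministic matrix, so that $Y^{per}(h_n)=R(h_n)\circ{Y^{per}}'(h_n)$ with ${\mbf Y^{per}}'$ independent of $\Gamma_w,\Gamma_x$. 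Expanding $\tau_N[T_{\mbf n}(\mbf Y^{per})]=\sum_{\rho_0\in\mcal P(V)}\tau_N^0[T_{\mbf n}^{\rho_0}(\mbf Y^{per})]$ and pulling the deterministic factors out of the expectation, I get
\[
\tau_N^0\big[T_{\mbf n}^{\rho_0}(\mbf Y^{per})\big]
=\Big(\tfrac1N\sum_{\phi}\prod_{e\in E}\upiota_{(1,2)}\big(R(h_{\mbf n(e)})\big)\big(\phi(e)\big)\Big)\cdot\delta^0\big[T_{\mbf n}^{\rho_0}({Y^{per}}')\big],
\]
where the sum runs over injective split $\phi:V^{\rho_0}\to[N]$ and the second factor depends on $\phi$ only through $\rho_0$, since the entries of each ${Y^{per}}'(h)$ are i.i.d.\ across matrix positions.

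Next I identify the surviving partitions, exactly as in Lemma~\ref{Lem:DistPerConstant}. Because each ${Y^{per}}'(h)$ has centered entries that are fixed linear combinations of the independent Gaussian entries of the ${Y^{per}}'(g_m)$, the factor $\delta^0[T_{\mbf n}^{\rho_0}({Y^{per}}')]$ vanishes unless every skeleton edge of $T_{\mbf n}^{\rho_0}$ has multiplicity at least $2$. Writing $\eta(\rho_0)=-1-\tfrac{|E|}2+|V^{\rho_0}|$ for the exponent of $N$ in $\tau_N^0[T_{\mbf n}^{\rho_0}(\mbf Y^{per})]$ and decomposing $\eta(\rho_0)=\big(|V^{\rho_0}|-1-|\bar E^{\rho_0}|\big)+\big(|\bar E^{\rho_0}|-\tfrac{|E|}2\big)$, Lemma~\ref{eq_graph} shows the first bracket is $\le0$ (equality iff $T_{\mbf n}^{\rho_0}$ is a tree) and the multiplicity constraint shows the second is $\le0$ (equality iff all edge multiplicities equal $2$); hence $\eta(\rho_0)\le0$, with equality precisely when $T_{\mbf n}^{\rho_0}$ is a double tree. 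On a double tree the strong components are the $|E|/2$ double edges, $|V^{\rho_0}|=\tfrac{|E|}2+1$ and $c(\rho_0)=\tfrac{|E|}2$, so only these $\rho_0$ contribute in the limit.

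It remains to evaluate the two factors on a double tree. For the random factor, Lemma~\ref{Lem:DistPerConstant} (equivalently, the computation in its proof) together with multilinearity in the labels gives $\delta^0[T_{\mbf n}^{\rho_0}({Y^{per}}')]=\prod_{C=\{e,e'\}\in\mcal C(\rho_0)}\tfrac{\psi_0}N\big(\esp[h_{\mbf n(e)}(\xi)h_{\mbf n(e')}(\xi)]-\esp[h'_{\mbf n(e)}(\xi)]\esp[h'_{\mbf n(e')}(\xi)]\big)(1+o(1))=\prod_{C}\tfrac{\psi_0}N\,f(h_{\mbf n(e)},h_{\mbf n(e')})(1+o(1))$, the subtracted term coming from ${Y^{per}}'(g_0)={Y^{per}}'(g_1)=0$ and from $\esp[\xi g(\xi)]=\esp[g'(\xi)]$; this supplies $\psi_0^{c(\rho_0)}$, the factor $N^{-|E|/2}$ cancelling $N^{\eta(\rho_0)}$ on double trees, and the factors $f$. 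For the profile factor, since $e$ and $e'$ are identified by $\rho_0$, the accumulated weight over a double edge $C=\{e,e'\}$ is $R(h_{\mbf n(e)})(\phi(e))R(h_{\mbf n(e')})(\phi(e))=\Lambda_2(\phi(e))^{(\mbf n(e)+\mbf n(e'))/2}$, which by the graph identity preceding the statement equals $\upiota_{(1,2)}\big(N^{-v_0}g_{\mathfrak S(e,e')}(\Gamma_w,\Gamma_x)\big)(\phi(e))$, with $\mathfrak S(e,e')$ the niche subgraph of Section~\ref{Focus2} and $v_0=\tfrac{\mbf n(e)+\mbf n(e')}2$ its number of internal vertices. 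Applying Lemma~\ref{ProfileTrick} strong--component by strong--component then substitutes these niches into $T_{\mbf n}^{\rho_0}$, and after replacing the map on the new internal vertices by an injective one at the cost of $o(1)$ (boundedness of $\Gamma_w,\Gamma_x$) one obtains $\tfrac1N\sum_\phi\prod_eR(h_{\mbf n(e)})(\phi(e))=N^{|V^{\rho_0}|-1}\psi_1^{|V_1^{\rho_0}|}\psi_2^{|V_2^{\rho_0}|}\,\delta^0[\mcal T_{\mbf n}^{\pi_0}(\Gamma_w,\Gamma_x)](1+o(1))$ for any $\pi_0$ with $\rho(\pi_0)=\rho_0$. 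Multiplying the two factors gives exactly the announced expression for $\tau_N[T_{\mbf n}(\mbf Y^{per})]$; since the right--hand side is multilinear in the edge labels and reference graphs span all $\mbb C[y]$--labelled test graphs, the convergence extends to every test graph and ${\mbf Y^{per}}$ converges in traffic distribution.

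The main obstacle is the bookkeeping of this last step: one must check that the powers of $N$ match after substitution, that the profile weight of a double edge really factors through $\Lambda_2$ entry--wise as above, and --- the genuinely delicate point, already present in Lemma~\ref{Lem:DistPerConstant} --- that the covariance structure of ${\mbf Y^{per}}'$ produces $f(h_1,h_2)$ and not merely $\esp[h_1(\xi)h_2(\xi)]$, which rests on the normalisation ${Y^{per}}'(g_0)={Y^{per}}'(g_1)=0$ and on Gaussian integration by parts (it is here that oddness of the labels enters). A secondary point is the uniform control of the $o(1)$ errors from Lemma~\ref{ProfileTrick} when the internal vertices of the substituted niches are forced to range injectively, which is handled by the boundedness of the variance profiles.
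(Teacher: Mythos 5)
Your proposal is correct and follows essentially the same route as the paper: factorizing the deterministic profile from the constant-profile Gaussian family ${\mbf Y^{per}}'$ under the injective trace, invoking the double-tree support and $f$-covariances of Lemma \ref{Lem:DistPerConstant}, and converting the double-edge profile weight into $\delta^0\big[\mcal T^{\pi_0}_{\mbf n}(\Gamma_w,\Gamma_x)\big]$ via the identity with $g_{\mbf n(e)+\mbf n(e')}$ and Lemma \ref{ProfileTrick}. The only difference is that you re-derive the double-tree restriction rather than citing it, which is harmless.
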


\begin{proof}Let $T=(V,E,\mbf n)$ be a reference test graph and denote $\tilde {\mbf R}=\big(  \Lambda_2^{\circ\frac n 2}\big)_{n\geq 1}$. The collection of matrices ${\mbf Y^{\mrm{per}}}'$ is invariant in law by left and right multiplication by permutation matrices,  so we can factorize the profiles under the injective trace
\eq
	 \lefteqn{ \tau_N \big[ T_{\mbf n}( \mbf Y^{per}) \big]}\\
	  & = & \sum_{\rho_0\in \mcal P(V) } \delta^0\big[ T^{\rho_0}_{\mbf n} ( \tilde{  \mbf R} )\big] \times  \tau^0_N \big[ T^{\rho_0}_{\mbf n}( {\mbf Y^{per}}') \big].
\qe
Therefore we can use the expression  $ \tau^0_N \big[ T_{\mbf n}( {\mbf Y^{per}}') \big]$ from the previous case.
\eq
	 \lefteqn{ \tau_N \big[ T_{\mbf n}( \mbf Y^{per}) \big]}\\
	  & = & \sum_{\substack{\rho_0 \in \mathcal{P}({V}) \\ T_{\mbf n}^{\rho_0} \mrm{\ double  \ tree}}} 
		 \psi_2^{|V_2^{\rho_0}|}  
		 \psi_1^{|V_1^{\rho_0}|} 	 \psi_0 ^{c(\rho_0)}	 \delta^0\big[ T^{\rho_0}_{\mbf n} (  \tilde{\mbf R} )\big] 		\nonumber\\
		 & & \times  \prod_{C\in \mcal C(\rho_0)}  \Big(   \esp \big[  h_{n(C)(\xi)} h_{n'(C)}(\xi) \big] -   \esp \big[  h'_{n(C)(\xi)} \big]  \esp \big[  h'_{n'(C)(\xi)} \big] \Big).
\qe
On the other hand, recalling that $ \upiota_{(1, 2)}$ is the canonical injection of $\mrm M_{N_1,N_2}(\mbb R) \to \mrm M_{N,N}(\mbb R)$, for any double edge $\bar e = \{ e, ,e'\}$ of $T_{\mbf n}^{\rho_0}$ and any split injective map $\phi:V^{\rho_0}\to[N]$, the definition of $\tilde {\mbf R}$ implies
\eq
	\lefteqn{ \upiota_{(1, 2)} (  \tilde{ R}_{n(e)}  )\big( \phi(  e) \big)    \times \upiota_{(1, 2)}(  \tilde{ R}_{n(e')}  )\big( \phi(  e') \big) } \\
	& = & \upiota_{(1, 2)}\big(  \Lambda_2^{\circ \mbf n(e)} \big) \big( \phi(  e) \big) \times  \upiota_{(1, 2)}\big(  \Lambda_2^{\circ \mbf n(e')} \big) \big( \phi(  e') \big) \\
	&= &  
	  \upiota_{(1, 2)}\Big( N^{-\frac{\mbf n(e)+\mbf n(e)}2}g_{\mbf n(e)+\mbf n(e')}(\Gamma_w, \Gamma_x) \Big)\big( \phi( e) \big),
	\qe
with  $g_{\mbf n(e)+\mbf n(e')}$ as in the beginning of the section. Therefore using Lemma \ref{ProfileTrick} for each double edge, we have 
	$$\delta^0\Big[ T^{\rho_0}_{\mbf n}\big( \sqrt[\circ]{ \Lambda_2}  \circ  \mbf R(h) \big)\Big] 	 =  \delta^0\big[\mcal T^{\pi_0}_{\mbf n}(\Gamma_w, \Gamma x) \big].$$
	Altogether this proves the expected asymptotic formula.
\end{proof}

\subsection{Conclusion}

We shall now use the asymptotic traffic independence principle for the collections $\mbf Y^{\mrm{lin}},\mbf Y^{\mrm{per}}$ and $\mbf B$. The drawback of our presentation is that since we have variance profiles we cannot use existing theorems to conclude. Although it is easy to use this theorem we are going to repeat the arguments of the three last section.

Let $\mbf Z_1$, $\mbf Z_2$ and $\mbf Z_3$ be three independent families of rectangular matrices indexed by some set $J$, that converges in traffic distribution and satisfy the asymptotic factorization property. Assume furthermore the  families are \emph{bi-permutation invariant}, that is $\mbf Z_j$ has the same law as the collection $\big(U \mbf Z_jV\big)_{j\in J}$ for any permutation matrices $U$ and $V$ of appropriate size, for $j=1,2,3$.

Then the asymptotic traffic independence theorem for rectangular matrices \cite{Zit24} proves that $(\mbf Z_1, \mbf Z_2, \mbf Z_3)$ converges in traffic distribution. Let us recall the sketch of the proof. Let $\mcal T$ be a testing graph in three families of variables $\mbf z_1, \mbf z_2, \mbf z_3$  with vertex set $\mcal V = V_0 \sqcup V_1\sqcup V_2$. Let $\pi$ be a split partition of its vertex set and denote for any $i=1,2,3$ by $ \mcal  T_i$ is the graph obtained from $ \mcal  T$ by removing edge that are not in $\mbf z_i$. Its vertex set is denoted $\mcal V_i = V_{0,i} \sqcup V_{1,i}\sqcup V_{2,i}$. Setting $(N)_{\mcal T^\pi} = \prod_{j=0,1,2} \big(N_{j} \big)_{| \mcal  V_{j}^{\pi}|}$, we have as before
\eq
	\tau_N^0\big[ \mcal T^{\pi}( \mbf Z_1, \mbf Z_2, \mbf Z_3) \big] & = & N^{-1} (N)_{\mcal T^\pi} \times   \delta^0 \big[ \mcal T^{\pi}( \mbf Z_1, \mbf Z_2, \mbf Z_3) \big]
\qe
and reciprocally
\eq
	\lefteqn{ \delta^0 \big[ \mcal T^{\pi}( \mbf Z_1, \mbf Z_2, \mbf Z_3) \big] = \delta^0 \big[ \mcal T^{\pi}( \mbf Z_1) \big]  \delta^0 \big[  \mcal T^{\pi}( \mbf Z_2) \big]   \delta^0 \big[ \mcal T^{\pi}( \mbf Z_3) \big] }\\
	  & = & (N)_{\mcal T_1^\pi}^{-1}(N)_{\mcal T_2^\pi}^{-1} (N)_{\mcal T_3^\pi}^{-1} \times \esp\big[  \prod_{S \in \mcal CC( \mcal T^{\pi})}  \Tr^0 S(\mbf Z_{i_S}) \big],
\qe
where the product is over the union all connected components $S$ of $ \mcal T_1^{\pi}$, $ \mcal T_2^{\pi}$ and $ \mcal T_3^{\pi}$, and $i_S$ denotes the edge labels in $1,2,3$ of matrices associated to $S$. 

We therefore get, setting $\mbf \Psi^{\pi} = \prod_{j=0,1,2} \psi_j^{| \mcal V_j^{\pi}| - \sum_{i=1,2,3} |  \mcal  V_{j,i}^{\pi}|}  $ we get 
\eq
 \tau_N^0\big[ \mcal  T^{\rho}( \mbf Z_1, \mbf Z_2, \mbf Z_3) \big]  	& = & N^{\eta(\pi)} \big( 1 + o(1) \big) \mbf \Psi^{\pi}  \prod_{S \in \mcal CC( \mcal T^{\pi})} \tau_N^0\big[  S(\mbf Z_{i_S}) \big]
\qe
for some $\eta(\pi)$ whose expression can be made explicite from the above computation. The important properties are that 
\begin{enumerate}
	\item $\eta(\pi)\leq 0$ with equality if and only if a certain graph, called the graph of colored component of $\mcal T^{\pi}$, is a tree \cite{Male2020,Zit24}
	\item if the elements of $\mcal{CC}(\mcal T^{\pi})$ are pseudo-cacti, then the graph of colored component of $\mcal T^{\pi}$ is a tree if and only if $\mcal T^{\pi}$ is a pseudo-cactus whose strong components have edges labels associate to a single family among $\mbf Z_1, \mbf Z_2, \mbf Z_3$ \cite{CDM24}.
\end{enumerate}
Assuming that for any test graph $S$ that is not a pseudo-cactus, $\Nlim \tau_N^0\big[ S(\mbf Z_i) \big]  =0$ for $i=1,2,3$, we get
\eq
	\tau_N\big[\mcal  T( \mbf Z_1, \mbf Z_2, \mbf Z_3) \big] & =& \sum_{ \substack{ \pi \in \mcal P(\mcal V) \mrm{ \ s.t.} \\ \mcal T^{\pi} \mrm{ \ w.c. \, p.-cactus}}}  \mbf \Psi^{\pi}  \prod_{S \in \mcal {CC}( \mcal T^{\pi})} \tau^0_N\big[ S(\mbf Z_{i_S})\big] +o(1)
\qe
where ''$\mrm{w.c. \, p-cactus}$`` is a shortcut for \emph{well-colored} pseudo-cactus, meaning that all edges of each simple cycle of the cactus $\mcal T^{\pi}$ are labeled either by $\mbf Z_1$, $\mbf Z_2$ or $\mbf Z_3$. 

The collections of matrices  $\mbf Y^{\mrm{lin}}, \mbf Y^{\mrm{per}}$ and $\mbf B$ are not bi-permutation invariant when the profiles are not constant. But they are defined by applying profiles to bi-permutation collections of independent matrices and we can use this property. 

More precisely, with $\mbf Z_1$, $\mbf Z_2$, $\mbf Z_3$ and the test graph $\mcal T$ given as above, assume we are also given a collection of matrices with bounded entries $\mbf \Gamma$ and a test graph $\mcal T_+$ obtained from $\mcal T$ by adding a set $\mcal E_+$ of edges labeled for matrices in $\mbf \Gamma$, but without adding vertices. Let $\mcal T_-$ be the union of test graphs obtained from $\mcal T_+$ by removing the edges that are not standing for a matrix in $\mbf \Gamma$. Since $\mcal T$ and $\mcal T_+$ have same vertex set we have
\eq
	\lefteqn{\tau_N\big[ \mcal T_+^{\rho}( \mbf Z_1, \mbf Z_2, \mbf Z_3, \mbf \Gamma) \big]  }\\
	& = & \sum_{ \substack{ \rho \in \mcal P(\mcal V) }}  N^{-1}  \big(N_{1} \big)_{| \mcal  V_{1}^{\rho}|}  \big(N_{2} \big)_{|  \mcal V_{2}^{\rho}|}   \delta^0 \big[ \mcal T^{\pi}( \mbf Z_1, \mbf Z_2, \mbf Z_3) \big]\delta^0 \big[ \mcal T_-^{\pi}( \mbf  \Gamma) \big] \\
	& =& \sum_{ \substack{ \pi \in \mcal P(\mcal V) \mrm{ \ s.t.} \\ \mcal T^{\pi} \mrm{ \ w.c. \, p-cactus}}}  \mbf \Psi^{\pi}  \delta^0 \big[ \mcal T_-^{\pi}(\mbf  \Gamma) \big] \prod_{S \in \mcal {CC}( \mcal T^{\pi})} \tau^0_N\big[ S(\mbf Z_{i_S})\big] +o(1)
\qe
Informally, we can factorize the profile contribution under the injective trace. We recall the definitions and set the following notations
	\eq
	 Y^{\mrm{lin}}(h) & :=&  \esp\big[  h' [  \{ \xi \sqrt[\circ]{ \Lambda_2}   \}  ] \big]   \circ  \Big( \frac{W}{\sqrt N}  \times \frac{X}{\sqrt N} \Big)\\
	 & =: & \Gamma_1(h) \circ \big( (\Gamma_w \circ  \frac{W'}{\sqrt N} ) \times (\Gamma_x \circ  \frac{X'}{\sqrt N})\big) \\
	 Y^{\mrm{per}}(h) &:= & h \big[ \big\{ \sqrt[\circ]{ \Lambda_2} \big\}  \big]  \circ {Y^{per}}'(h) =: \Gamma_2(h) \circ {Y^{per}}'(h),\\
	 B(h) & = &   
		 \frac {m_w^{(3)} m_x^{(3)}}{6N}  \Lambda_3\circ  \esp\bigg[  h^{'''}\Big[ \Big\{ \xi \sqrt[\circ]{ \Lambda_2}  \Big\} \Big] \bigg] =: \Gamma_3(j) \circ \mbb J_N
	 \qe
 where $\mbb J_N$ is the matrix whose entries are $\frac 1 N$. We set $\mbf Z_1 = (W'/\sqrt N, X'/\sqrt N)$, $\mbf Z_2 = {\mbf Y^{per}}'$, $\mbf Z_3 = ( \mbb J_N)$ and the collection $\mbf \Gamma $ consisting in $\Gamma_w, \Gamma_x$ and the matrices $\Gamma_1(h),   \Gamma_2(h), \Gamma_3(h)$ for all $h\in \mbb C [y]$. For any test graph $T$ in three collections of variables
\eq
	\tau_N\big[T( \mbf Y^{\mrm{lin}}, \mbf Y^{\mrm{per}} , \mbf B)\big] & = & \tau_N\big[\mcal T_+( \Gamma, \mbf Z_1, \mbf Z_2, \mbf Z_3)\big] 
\qe
where $\mcal T_+$ is obtained from $T$
\begin{itemize}
	\item adding for each edge labeled of $T$ associated to $Y^{\mrm{per}}(h)$ and edge with label $\Gamma_2(h)$ and same endpoints,
	\item adding for each edge labeled of $T$ associated to $B(h)$ and edge with label $\Gamma_3(h)$ and same endpoints,
	\item adding for each edge labeled of $T$ associated to $Y^{\mrm{lin}}(h)$ and edge with label $\Gamma_2(h)$ and same endpoints, 
	\item replacing each edge of $T$ associated to $Y^{\mrm{lin}}(h)$ by a niche with one internal vertex, one edge labeled $w'$ and one edge labeled $x'$
	\item adding for each edge in labeled $w'$ and edge with label $\Gamma_w$ and same endpoints, and for each edge in labeled $x'$  and edge with label $\Gamma_x$ similarly. 
\end{itemize}

So we can apply the above observation: with same notations as above
\eq
	\lefteqn{ \tau_N\big[\mcal T_+( \Gamma, \mbf Z_1, \mbf Z_2, \mbf Z_3)\big]  }\\
	& =& \sum_{ \substack{ \pi \in \mcal P(\mcal V) \mrm{ \ s.t.} \\ \mcal T^{\pi} \mrm{ \ w.c. \, p-cactus}}}   \mbf \Psi^{\pi} \delta^0 \big[ \mcal T_-^{\pi}(\mbf  \Gamma) \big] \prod_{S \in \mcal {CC}( \mcal T^{\rho})} \tau^0_N\big[ S(\mbf Z_{i_S})\big] +o(1).
\qe

We have for any test graph $S$
	\eq
		 \tau^0_N\big[ S(\mbf Z_3)\big] & \limN & \one\Big( S \mrm{ \ is \ a \ tree} \Big),\\
		  \tau^0_N\big[ S(\mbf Z_2)\big] & \limN & \one\Big( S \mrm{ \ is \ a \ double \ tree} \Big) \prod_{C\in \mcal {SC}(S)} f\big( h_{n(S)}, h_{n'(S)}\big) \\
		  \tau^0_N\big[ S(\mbf Z_1)\big] & \limN & \one\Big( S \mrm{ \ is \ a \ w.c.-double \ tree} \Big),\\
	\qe
where in the second formula $h_{n(S)}, h_{n'(S)}$ are the edges labels in the double edge $S$ and in the third one ''$\mrm{w.c.-double \ tree}$`` means that for each doubles edge, both edges  labels are $w$ or are $x$. 

Let as before $\rho(\pi)$ be the restriction of $\pi$ to the vertices of $\mcal V$ in $V_1 \sqcup V_2$. The previous niche-neighbor argument shows that $T^{\rho}$ is a cactus such that for the simple cycles of $T^{\rho}$ labeled by the edges associated to $\mbf Y^{\mrm{lin}}$, the edges of their niche forms the star-shape test graph of section \ref{Focus3} when all edges labels are 1. We therefore have, when $T$ is a reference test graph
\eq
	\lefteqn{\tau_N\big[T( \mbf Y^{\mrm{lin}}, \mbf Y^{\mrm{per}} , \mbf B)\big]  }\\
	& =& \sum_{ \substack{ \rho_0 \in \mcal P(  V) \mrm{ \ s.t.} \\ T^{\rho_0} \mrm{  pseudo-cactus}}}   \mbf \Psi^{\rho} \prod_{S \in \mcal {CC}_1(  T^{\rho_0})}  \one\Big( \mrm{the \ label \ is \ in \ } \mbf z_3 \Big)\\
	& & \times  \prod_{S \in \mcal {CC}_2(  T^{\rho_0})} \bigg(  \one\Big( \mrm{the \ labels \ are \  in \ } \mbf z_2 \Big) \psi_0  \prod_{C\in \mcal {SC}(S)} f\big( h_{n(S)}, h_{n'(S)}\big)\bigg)\\
	& & \times  \prod_{S \in \mcal {CC}_3(  T^{\rho_0)} } \one\Big( \mrm{the \ labels \ are \ in \ } \mbf z_1 \Big)  \psi_0	\times  \delta^0 \big[ \mcal T_-^{\pi_0}(\mbf  \Gamma) \big] +o(1).
\qe
where $\pi_0$ is the only partition of $\mcal V$ such that $\rho(\pi) = \rho_0$ and $\mcal T^{\pi}$ is a pseudo-cactus with simple cycles of length two. Going back to the definition as the profile matrices as in the previous sections shows
\eq
	\lefteqn{\tau_N\big[T( \mbf Y^{\mrm{lin}}, \mbf Y^{\mrm{per}} , \mbf B)\big]  }\\
	& =& \sum_{ \substack{ \rho_0 \in \mcal P(  V) \mrm{ \ s.t.} \\ T^{\rho_0} \mrm{  pseudo-cactus}}}  \delta^0 \big[ \mcal T_n^{\pi_0}(\mbf  \Gamma) \big]   \mbf \Psi^{\rho} \\
	& & \times  \prod_{\substack{S \in \mcal {CC}_1(  T^{\rho_0})\\ \mrm{with \ label \ in \ } \mbf z_3}} \left (  \frac{ m_w^{(3)} m_x^{(3)}}6\right)^{c_1(\rho_0)} \prod_{S\in \mcal C_1(\rho_0)}  \esp\big[   h'''_{n(S)}(\xi) \big]\\
	& & \times  \prod_{\substack{S \in \mcal {CC}_2(  T^{\rho_0})\\ \mrm{with \ label \ in \ } \mbf z_2}}  \psi_0  \prod_{C\in \mcal {SC}(S)} f\big( h_{n(S)}, h_{n'(S)}\big)\\
	& & \times  \prod_{\substack{S \in \mcal {CC}_3(  T^{\rho_0)} \\ \mrm{with \ label \ in \ } \mbf z_1}} \psi_0 \prod_{\ell=1}^{L_S}\esp[ h'_{n_\ell(S)}(\xi) ]   	+o(1).
\qe

Hence, if $T =(V,E, \gamma)$ is a reference test graph in a single collection $\mbf z$, denoting for any $\theta:E \to \{1,2,3\}$ by $T_\theta$ the test graph obtained from $T$ by changing for each edge $e$ its label $\gamma(e) = z(h)$ into $z_{\theta(e)}(h)$, we have
	\eq
		\tau^0_N\big[T^{\rho_0}( \mbf Y^{\mrm{lin}} +  \mbf Y^{\mrm{per}} + \mbf B)\big] & = & \sum_{\theta :E \to \{1,2,3\}} \tau^0_N\big[T^{\rho_0}_\theta( \mbf Y^{\mrm{lin}}, \mbf Y^{\mrm{per}} , \mbf B)\big].
	\qe
If $S$ is a double edge of $T^{\rho_0}$, either $\theta$ attributes  labels in $\mbf z_1$ for both edges, or labels in $\mbf z_2$. All other contributions factorizing, these two termes adds up to give the expected formula. This proves that any trace of $\mbf Y^{\mrm{lin}} +  \mbf Y^{\mrm{per}} + \mbf B$ in a reference test graph labeled by odd polynomials satisfies the same asymptotic formula than the collection of Pennington-Worah matrices. Hence they have the same limiting traffic distribution, which conclude the proof of the main theorem. 
\bibliographystyle{alpha}
\bibliography{biblio.bib}

\begin{thebibliography}{HMRT22}

\bibitem[Au18]{Au16}
Benson Au.
\newblock Traffic distributions of random band matrices.
\newblock {\em Electron. J. Probab.}, 23:Paper No. 77, 48, 2018.

\bibitem[BG09]{BG09}
Florent Benaych-Georges.
\newblock Rectangular random matrices, related convolution.
\newblock {\em Probab. Theory Related Fields}, 144(3-4):471--515, 2009.

\bibitem[BMS17]{BMS17}
Serban~T. Belinschi, Tobias Mai, and Roland Speicher.
\newblock Analytic subordination theory of operator-valued free additive
  convolution and the solution of a general random matrix problem.
\newblock {\em J. Reine Angew. Math.}, 732:21--53, 2017.

\bibitem[BP21]{BenPec}
Lucas Benigni and Sandrine P\'{e}ch\'{e}.
\newblock Eigenvalue distribution of some nonlinear models of random matrices.
\newblock {\em Electron. J. Probab.}, 26:Paper No. 150, 37, 2021.

\bibitem[BP22]{BenPec22}
Lucas {Benigni} and Sandrine {P{\'e}ch{\'e}}.
\newblock {Largest Eigenvalues of the Conjugate Kernel of Single-Layered Neural
  Networks}.
\newblock {\em arXiv e-prints}, page arXiv:2201.04753, January 2022.

\bibitem[Cap13]{Cap13}
M.~Capitaine.
\newblock Additive/multiplicative free subordination property and limiting
  eigenvectors of spiked additive deformations of {W}igner matrices and spiked
  sample covariance matrices.
\newblock {\em J. Theoret. Probab.}, 26(3):595--648, 2013.

\bibitem[CDM07]{CD07}
M.~Capitaine and C.~Donati-Martin.
\newblock Strong asymptotic freeness for {W}igner and {W}ishart matrices.
\newblock {\em Indiana Univ. Math. J.}, 56(2):767--803, 2007.

\bibitem[CM14]{CollinsMale}
Beno\^it Collins and Camille Male.
\newblock The strong asymptotic freeness of {H}aar and deterministic matrices.
\newblock {\em Ann. Sci. \'Ec. Norm. Sup\'er. (4)}, 47(1):147--163, 2014.

\bibitem[Dyk94]{Dyk94}
Ken Dykema.
\newblock Interpolated free group factors.
\newblock {\em Pacific J. Math.}, 163(1):123--135, 1994.

\bibitem[GC24]{CDM24}
Camille~Male Guillaume~C{\'e}bron, Antoine~Dahlqvist.
\newblock Traffic distributions and independence ii: Universal constructions
  for traffic spaces.
\newblock {\em Documenta Mathematika}, 29(1):39--114, 2024.

\bibitem[HMRT22]{Mont22}
Trevor Hastie, Andrea Montanari, Saharon Rosset, and Ryan~J. Tibshirani.
\newblock Surprises in high-dimensional ridgeless least squares interpolation.
\newblock {\em Ann. Statist.}, 50(2):949--986, 2022.

\bibitem[Mal20]{Male2020}
Camille Male.
\newblock {Traffic distributions and independence: permutation invariant random
  matrices and the three notions of independence}.
\newblock {\em Mem. Amer. Math. Soc.}, 267(1300), 2020.

\bibitem[P{\'{e}}c19]{Peche19}
Sandrine P{\'{e}}ch\'{e}.
\newblock A note on the {P}ennington-{W}orah distribution.
\newblock {\em Electron. Commun. Probab.}, pages Paper No. 66, 7, 2019.

\bibitem[PW19]{PenWor}
Jeffrey Pennington and Pratik Worah.
\newblock Nonlinear random matrix theory for deep learning.
\newblock {\em J. Stat. Mech. Theory Exp.}, pages 124005, 14, 2019.

\bibitem[Shl96]{SHL}
Dimitri Shlyakhtenko.
\newblock Random {G}aussian band matrices and freeness with amalgamation.
\newblock {\em Internat. Math. Res. Notices}, (20):1013--1025, 1996.

\bibitem[Voi91]{Voiculescu1991}
Dan-Virgil Voiculescu.
\newblock {Limit laws for random matrices and free products}.
\newblock {\em Invent. Math.}, 104(1):201--220, 1991.

\bibitem[Zit24]{Zit24}
Gregory Zitelli.
\newblock Traffic probability for rectangular random matrices.
\newblock {\em Random Matrices Theory Appl.}, 13(3):Paper No. 2450013, 55,
  2024.

\end{thebibliography}

\end{document}